\theoremstyle{plain} 
\newtheorem{tw}{Theorem}[section]	
\newtheorem{corollary}[tw]{Corollary}
\newtheorem{pro}[tw]{Proposition}
\newtheorem{lemma}[tw]{Lemma}
\theoremstyle{definition} 
\newtheorem{definition}[tw]{Definition}
\newtheorem{example}[tw]{Example}
\newcommand\mN{{\mathbb N}}
\newcommand\bx{\mathbf{x}}
\newcommand\by{\mathbf{y}}
\newcommand\cA{{\mathcal A}} 
\newcommand\cF{{\mathcal F}}
\renewcommand{\c}{\circ}
\newcommand\st{\star}
\newcommand\cMm{\mathcal{M}_{(X,\cA)}}
\newcommand\cFf{\cF_{(X,Y)}} 
\newcommand{\sA}{\mathsf{A}}
\newcommand{\rH}{\mathsf{H}} 
\newcommand{\riH}{\mathsf{iH}}
\newcommand\trd{\,\triangledown\,}
\newcommand\trdd{\triangledown}
\newcommand{\rJ}{\mathrm{J}}
\newcommand\rI[1]{\mathrm{Su}(#1)}
\newcommand\rIm[2][\mu]{\mathrm{Su}(#1,#2)}
\newcommand\cg[2][\c]{\mathrm{Su}^{#1}_{2}(#2)}   
\newcommand\cgn[3][\c]{\mathrm{Su}^{#1}_{#3}(#2)} 
\newcommand\cgnm[3][\mu]{\mathrm{Su}^{\c}_{#3}(#1,#2)} 
\newcommand\cgd[2][\st]{\mathrm{Su}_{#1}^{2}(#2)}
\newcommand\cgdn[3][\st]{\mathrm{Su}_{#1}^{#3}(#2)}
\newcommand\cgdnm[3][\mu]{\mathrm{Su}^{#3}_{\st}(#1,#2)}
\newcommand\bmm[5][\mu]{\mathrm{I}_{#2}^{#3,#4}(#1,#5)} 
\renewcommand\ge{\geqslant}
\renewcommand\le{\leqslant}
\newcommand{\mI}[1]{\mathbbm{1}_{#1}}
\definecolor{darkgreen}{rgb}{0,0.5,0}
\title{New monotone measure-based integrals \\ inspired by scientific impact problem} 
\author{Micha{\l} Boczek$^{\textrm{a},}$\footnote{Corresponding author}, Anton Hovana$^{\textrm{b}},$ Ondrej Hutn\'ik$^{\textrm{b}},$ Marek Kaluszka$^{\textrm{a},}$\footnote{
\textit{E-mail adressess}: michal.boczek.1@p.lodz.pl, ondrej.hutnik@upjs.sk, anton.hovana@student.upjs.sk, kaluszka@p.lodz.pl
}
\\ 
{\emph{
\small{$^{\textrm{a}}$Institute of Mathematics, Lodz University of Technology, 90-924 Lodz, Poland}}}\\
\emph{ \small{$^{\textrm{b}}$Institute of Mathematics, Pavol Jozef \v Saf\'arik University in Ko\v sice, 040-01 Ko\v sice,
Slovakia}}
}
\date{}
\begin{document}
\maketitle

\begin{abstract}
In this paper, we define new functionals generalizing scientometric indices proposed by Mesiar and G\k{a}golewski in 2016 to overcome some limitations of $h$-index. These functionals are integrals with respect to a~monotone measure as well as aggregation functions under some mild conditions. We derive  numerous properties of the new integrals and analyze subadditivity property in detail. We also give a~partial solution to the problem posed by Mesiar and  Stup\v nanov\'a to find an algorithm for computing the pseudo-decomposition integral of $n$-th order based on operations $\oplus=+$ and $\odot=\wedge,$ which will be useful in multi-criteria decision problems. 
\end{abstract}

\noindent{\it Keywords: }{scientometrics; 
Multiple criteria analysis; $h$-index; Aggregation function;  
Pseudo-decomposition integral.
}

\section{Introduction}
In order to compare the efficiency of work of two researchers, one must construct a~rule  that is the most objective and fair. It turns out that the task is very difficult. Currently, there are many scientometric indices known in the literature. Their calculations are based on two inputs: number of publications and  number of citations of each publication (measuring the quality and importance of publications). Nowadays, the most popular scientometric index is $h$-index introduced in 2005 by Hirsch \cite{hirsch}. It is implemented in the largest scientific databases such as Scopus, or WoS. An axiomatic approach explaining the nature of $h$-index can be found in papers 
\cite{brandao,Miroiu,Quesada2,woeginger},
whereas its mathematical properties can be found in \cite{Franze}. 
Torra and Narukawa \cite{torra2} proved that $h$-index is the Sugeno integral with respect to the counting measure. And because of specificity of the Sugeno integral, $h$-index has some drawbacks, see \cite{mingers}. For example, let's consider two researchers possessing the same number of papers, say $n,$ but each paper of the first one is cited $n$-times, and each paper of the second one is cited exactly $3n$-times. Using the criterion of importance of author's publication and the number of quotations for each paper, one can see that the second researcher should have a~higher scientometric index
if both authors work in the same domain and have similar research experience measured by the years of work. However, $h$-index of both authors is the same and is equal to $n.$ To overcome the above limitations, Mesiar and G\k{a}golewski \cite{mesiar14} have proposed two new indices based on the idea of $h$-index (for more details, see Section \ref{sec:application}).

The present paper introduces and studies properties of two functionals: upper $n$-Sugeno integral and lower $n$-Sugeno integral of a~measurable function with respect to a~monotone measure. Both functionals are 
integrals in the spirit of definition introduced here. 
Integrals have many applications, especially in multicriteria decision theory, economy, optimalization or data classification \cite{bo,Ch,coucerio,dubois,greco4,hirsch,torra06,torra2,zhai}. To the best of our knowledge, there are only very few papers which describe the connection between integrals and scientometric indices, see~\cite{gag,torra2}. From those papers it follows that total number of citations is related to Choquet integral, $h$-index and Kosmulski $h(2)$-index are related to Sugeno integral, Kosmulski MAXPROD index 
is related to Shilkret integral, etc. In this paper we further show that the upper and lower $n$-Sugeno integrals generalize scientometric indices introduced by Mesiar \& G\k{a}golewski and others, e.g. generalized Kosmulski index \cite{deineko}, iterated $h$-index \cite{garcia}, 
$\rH_\alpha$ and $\rH^\beta$ indices \cite{JMS}.

The second main result of the work is a~relation between the lower $n$-Sugeno integral and some special pseudo-decomposition integral introduced by Mesiar and Stup\v{n}anov\'{a} in \cite{mesiar13a} (see Theorem~\ref{tw4.10} below). Their question from \cite[Conclusion]{mesiar13a} motivated us to describe an algorithm for computation of the pseudo-decomposition integral. 

Our paper is organized as follows. In Section \ref{sec:not} we provide basic notations and definitions we work with. In Sections \ref{sec:upper} and \ref{sec:lower} we introduce new concepts of upper and lower $n$-Sugeno  integral, examine their basic properties and provide their equivalent forms.  Section~\ref{sec:application} includes applications of the obtained results mainly to aggregation and scientometrics. For better readability we postpone some technical proofs of our statements to Appendix.

\section{Basic notations and preliminaries}\label{sec:not}

Let $(X,\cA)$ be a~measurable space, where $\cA$ is a~$\sigma$-algebra of subsets of a~non-empty set $X.$
The class of all measurable functions $f\colon X\to Y,$ where $Y=[0,\bar{y}]$ for $0<\bar{y}\le \infty,$ is denoted by $\cFf.$ Usually, we  take $\bar{y}=1$ or $\bar{y}=\infty.$
A~{\it monotone measure}  on $\cA$  is a~nondecreasing set function $\mu\colon \cA\to [0,\infty],$ i.e., $\mu(A)\le\mu(B)$ whenever $A\subset B$ with $\mu(\emptyset)=0$ and $\mu(X)>0.$ The range of $\mu$ we write as $\mu(\cA).$ We denote by $\cMm$ the class of all monotone measures  on $(X,\cA).$ Given $f,g\in\cFf$ and $\mu\in\cMm,$ we say that \textit{$g$ dominates $f$} with respect to  $\mu$ and write $f\le_\mu g$ if $\mu(\{f\ge t\})\le \mu(\{g\ge t\})$ for all $t,$ where $\{f\ge t\}=\{x\in X\colon f(x)\ge t\}.$    Hereafter, $a\wedge b=\min(a,b)$ and $a\vee b=\max(a,b).$  
We say that a~function $\c\colon Y_1\times Y_2\to [0,\infty]$ is \textit{nondecreasing} if $a_1\c a_2\le b_1\c b_2$ whenever $a_i\le b_i,$ where $a_i,b_i\in Y_i\subset [0,\infty]$ for $i=1,2.$

Sugeno integral of $f\in\cFf$ with respect to $\mu\in\cMm$ \cite{sug,wang} is  defined by
\begin{align}\label{sugeno}
\rIm{f}:=\sup_{t\in Y}\{t\wedge \mu (\{ f\ge t\})\}.
\end{align}
To this day, many researchers introduced numerous generalizations of the Sugeno integral like  generalized upper Sugeno integral, pseudo-decomposition integral or q-integral for $\bar{y}=\mu(X)=1,$
and studied   their properties \cite{boczek9,hutnik1,dubois,kaluszka,mesiar13a,suarez}.

To make our paper as self-contained as it gets, we give some properties of the Sugeno integral that we follow later. Hereafter, 
$c\searrow a$ and $c\nearrow a$ means that $c\to a$ for  $c>a$ and  $c<a,$ respectively.

\begin{lemma}\label{lemat}
Let $(\mu,f)\in \cMm\times \cFf.$ The Sugeno integral possesses the following properties:
\begin{enumerate}[noitemsep]
\item[(a)] $\rIm{f}\in Y,$
\item[(b)]  $t>\mu(\{f\ge t\})$ for $t>\rIm{f}$ and $t<\mu(\{f\ge t\})$ for $t<\rIm{f},$
\item[(c)] $\rIm{f}=\lim _{t\nearrow \rIm{f}}(t\wedge \mu (\{f\ge t\}))$ if $\rIm{f}>0,$ 
\item[(d)] $\rIm{f}=\lim _{t\searrow \rIm{f}}(t\vee \mu (\{f>t\}))$ if $\rIm{f}<\bar{y},$ 
\item[(e)] $\rIm{f}=0$ if and only if $\mu(\{ f\ge t\} )=0$ for all $t>0.$
\end{enumerate}
\end{lemma}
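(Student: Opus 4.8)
The plan is to prove each of the five properties by working directly from the definition $\rIm{f}=\sup_{t\in Y}\{t\wedge \mu(\{f\ge t\})\}$ and exploiting the monotonicity of $t\mapsto\mu(\{f\ge t\})$ (which is nonincreasing) against the increasing map $t\mapsto t$. The key structural fact I would establish first is the \emph{crossing} behaviour: the two functions $t$ and $\mu(\{f\ge t\})$ are monotone in opposite directions, so the value of the Sugeno integral is essentially the level where they cross, and property (b) is the precise expression of this. I expect (b) to be the workhorse from which most of the rest follows.

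\medskip

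For part (a), I would note that the supremand $t\wedge\mu(\{f\ge t\})$ never exceeds $t\in Y$, and since $Y=[0,\bar y]$ is an interval closed under suprema (in $\moR$), the supremum stays in $Y$. For part (b), take $t>\rIm{f}$; by definition of the supremum, $t\wedge\mu(\{f\ge t\})\le\rIm{f}<t$, which forces $t\wedge\mu(\{f\ge t\})=\mu(\{f\ge t\})$, hence $\mu(\{f\ge t\})\le\rIm{f}<t$, giving $t>\mu(\{f\ge t\})$. The reverse inequality for $t<\rIm{f}$ is the subtler half: I would argue that if we had $t\ge\mu(\{f\ge t\})$ for some $t<\rIm{f}$, then by the nonincreasing property of $s\mapsto\mu(\{f\ge s\})$ every $s\ge t$ would satisfy $s\wedge\mu(\{f\ge s\})\le s\wedge\mu(\{f\ge t\})\le\max(t,\mu(\{f\ge t\}))$, and combined with the bound $s\wedge\mu(\{f\ge s\})\le s\le t$ for $s\le t$ this would cap the supremum at $\max(t,\mu(\{f\ge t\}))$; a careful bookkeeping of these two regimes contradicts $\sup=\rIm{f}>t$. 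This case analysis separating $s\le t$ from $s>t$ is where I expect the main obstacle, since one must rule out the crossing happening to the left of $t$.

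\medskip

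Parts (c) and (d) are continuity-type statements that I would derive from (b). For (c), with $a:=\rIm{f}>0$, part (b) gives $s<\mu(\{f\ge s\})$ for all $s<a$, so $s\wedge\mu(\{f\ge s\})=s$ on $(0,a)$, whence $\lim_{t\nearrow a}(t\wedge\mu(\{f\ge t\}))=\lim_{t\nearrow a}t=a$. For (d), with $a=\rIm{f}<\bar y$, I would use that $\{f>t\}=\bigcup_{s>t}\{f\ge s\}$ and the opposite inequality $s>\mu(\{f\ge s\})$ from (b) for $s>a$; taking $t\searrow a$ and bounding $\mu(\{f>t\})$ from above by $\mu(\{f\ge s\})$ for suitable $s$ yields $t\vee\mu(\{f>t\})\to a$. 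Part (e) is then immediate in both directions: if $\mu(\{f\ge t\})=0$ for all $t>0$ then every supremand with $t>0$ vanishes and the one at $t=0$ is $0$, so $\rIm{f}=0$; conversely if some $t_0>0$ had $\mu(\{f\ge t_0\})>0$, then $t_0\wedge\mu(\{f\ge t_0\})>0$ forces $\rIm{f}>0$ by contraposition of (b).
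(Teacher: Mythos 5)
Your proposal is correct and follows essentially the same route as the paper: parts (a) and (b) are read off the definition of the supremum (your contradiction argument for the half $t<\rIm{f}$ just fills in what the paper delegates to the definition and a citation), parts (c) and (d) are deduced from (b) by observing that the supremand equals $t$ on the relevant side of $\rIm{f}$, and (e) is handled by splitting off the $t=0$ term. The only cosmetic difference is that in (d) you invoke $\{f>t\}=\bigcup_{s>t}\{f\ge s\}$, whereas the inclusion $\{f>t\}\subset\{f\ge t\}$ together with (b) already gives $\mu(\{f>t\})<t$ for $t>\rIm{f}$ directly.
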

\begin{proof} 
Properties (a) and (b) follow from 
\eqref{sugeno}, since $Y=[0,\bar{y}]$ (see also \cite[Lemma~9.7]{wang}). Properties (c) and (d) follow from (b) as 
\begin{align*}
\rIm{f}&=\lim _{t\nearrow \rIm{f}}t=\lim _{t\nearrow \rIm{f}}\big(t\wedge \mu (\{f\ge t\})\big),\\
\rIm{f}&=\lim _{t\searrow \rIm{f}}t= \lim _{t\searrow \rIm{f}}\big(t\vee \mu (\{f> t\})\big).
\end{align*}
To prove (e), by the definition of the Sugeno integral we have that
\begin{align*}
0=\sup_{t\ge 0}\{ t\wedge \mu (\{ f\ge t\} )\}=(0\wedge \mu(X))\vee \sup_{t>0}\{ t\wedge \mu(\{ f\ge t\} )\}.
\end{align*}
Now it is evident that $\mu(\{ f\ge t\})=0$ for all $t>0.$
\end{proof}

We formulate 
 properties
which any integral should possess.

\begin{definition}\label{defcalka}
A~functional  $\rJ\colon \cMm\times \cFf\to [0,\infty]$
is called an {\it integral}
if 
\begin{enumerate}[noitemsep]
\item[$(C_1)$] 
$\rJ(\mu,f)\le  \rJ(\mu,g)$ 
whenever $f\le_\mu g,$
\item[$(C_2)$]  
$\rJ(\mu,f)\le\rJ(\nu,f)$ 
whenever $\mu(A)\le\nu(A)$ for all $A\in \cA,$ 
\item[$(C_3)$] 
$\rJ(\mu,a\mI{A})=s(a,\mu(A))$ 
for all $a\in Y$  and $A\in\cA,$ where 
$s\colon Y\times [0,\infty] \to Y$ is a~nondecreasing function 
such that $s(a,0)=s(0,b)=0$ for any $a,b.$ 
\end{enumerate}
\end{definition}

 For a~fixed $\mu\in\cMm,$ the property $(C_3)$ is known in the literature as $\mu$-generated property of the integral 
$\rJ,$ see \cite[Definition 3.3]{kawabe1}. 
Restriction to the class of monotone measures with $\mu(X)=1$ and $Y=[0,1]$ in Definition~\ref{defcalka} is closely related to fuzzy integral introduced by Struk \cite{struk}. In fact, the condition $(C_3)$ with $s(a,1)=a=s(1,a)$ for any $a\in [0,1]$ implies the conditions (2) and (3) in \cite[Definition 1]{struk}. However, the assumption (1) from \cite[Definition 1]{struk} is stronger than $(C_1).$ 
Examples of integrals of nonnegative functions with respect to monotone measures include the Choquet integral 
\cite{choquet}, Sugeno integral or generalized upper Sugeno integral (see formula (2) in~\cite{bo}) under some additional restrictions.

\section{Upper $n$-Sugeno integral}\label{sec:upper}
In this section, we introduce a~new type of integral with respect to a~monotone measure. Our motivation for doing so comes from the 
lower $2$-$h$-index defined by Mesiar and G\k{a}golewski (see~\eqref{hd}).

We say that  $\c\colon Y\times Y \to Y$ is an {\it admissible fusion map} if it is nondecreasing and $0 \c a\le a$ for all  $a\in Y.$  The most important examples for $Y=[0,\infty]$ are:  the standard addition, pseudo-addition \cite{benvenuti}, the standard product,
minimum, maximum or means \cite{beliakov}.
Moreover, for $Y=[0,1]$ 
 the examples are: boolean conjunctions such as
semicopulas \cite{bas,bo,dur}, copulas \cite{dur2}, $t$-norms, conjunctive aggregations \cite{beliakov} and  fuzzy conjunctions \cite{dubois}, and other binary operations like  uninorms, $t$-semiconorms or averaging aggregations \cite{beliakov}.

\begin{definition}\label{def_upper_sug}
Let  $(\mu,f)\in \cMm\times \cFf$ and $\c$ be an admissible fusion map. For $n\ge 1$ the \textit{upper $n$-Sugeno integral} is defined using the recurrence
\begin{align*}
\cgnm{f}{n+1}:=\sup_{t\in Y}\big\{ (t\c \cgnm{f}{n})\wedge \mu(\{ f\ge t\})\big\}
\end{align*}
with the initial condition $\cgnm{f}{1}:=\rIm{f}.$
\end{definition}

Lemma~\ref{lemat}\,(a) yields  $\cgnm{f}{1}\in Y.$  The induction 
implies  that $\cgnm{f}{n}\in Y$ for all $n\ge 2,$ so the functional $\cgnm{f}{n}$ in Definition~\ref{def_upper_sug} is well-defined. We show that
the upper $n$-Sugeno integral
is an integral in the sense of Definition \ref{defcalka}.

\begin{pro}\label{pro3.2}
Let $n\ge 1,$ $f,g\in\cFf$ and $\mu,\nu\in\cMm.$  Then
\begin{enumerate}[noitemsep]
\item[(a)] $\cgnm{f}{n}\le \cgnm{g}{n}$ whenever $f\le_\mu g.$ 
\item[(b)] $\cgnm[\mu]{f}{n}\le \cgnm[\nu]{f}{n}$ whenever $\mu(A)\le\nu(A)$ for all $A\in \cA.$ 
\item[(c)] $\cgnm{a\mI{A}}{n}=s_n(a,\mu(A))$ for $a\in Y$ and $A\in\cA,$ where $s_n\colon Y\times [0,\infty]\to Y$ is a~nondecreasing function 
such that  $s_n(a,0)=s_n(0,b)=0$ and $s_{n+1}(a,b)=(a\c s_n(a,b))\wedge b$ for all $a,b.$
\end{enumerate}
\end{pro}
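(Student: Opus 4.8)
The plan is to prove all three parts simultaneously by induction on $n$, with the base case $n=1$ reducing to elementary facts about the Sugeno integral read off from \eqref{sugeno}, and the inductive step exploiting the defining recurrence of Definition~\ref{def_upper_sug} together with the two defining features of an admissible fusion map, namely that $\c$ is nondecreasing and that $0\c a\le a$ for all $a\in Y$. Throughout I would use that $f\le_\mu g$ means exactly $\mu(\{f\ge t\})\le\mu(\{g\ge t\})$ for all $t$, and that the level sets are measurable so that $\mu(\{f\ge t\})\le\nu(\{f\ge t\})$ whenever $\mu\le\nu$ on $\cA$.

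Parts (a) and (b) are the routine monotonicity statements. For (a) the base case $\cgnm{f}{1}=\rIm{f}\le\rIm{g}=\cgnm{g}{1}$ is immediate from \eqref{sugeno}, since $f\le_\mu g$ dominates the integrand $t\wedge\mu(\{f\ge t\})$ termwise by $t\wedge\mu(\{g\ge t\})$ and the supremum preserves this. For the step I would assume $\cgnm{f}{n}\le\cgnm{g}{n}$; then $t\c\cgnm{f}{n}\le t\c\cgnm{g}{n}$ because $\c$ is nondecreasing, and combined with $\mu(\{f\ge t\})\le\mu(\{g\ge t\})$ the integrand defining $\cgnm{f}{n+1}$ is dominated termwise by that defining $\cgnm{g}{n+1}$, so taking suprema closes the induction. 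Part (b) has the identical structure with $f$ fixed: here $\mu(A)\le\nu(A)$ for all $A$ gives $\mu(\{f\ge t\})\le\nu(\{f\ge t\})$, and the same termwise comparison, using the inductive hypothesis $\cgnm[\mu]{f}{n}\le\cgnm[\nu]{f}{n}$ and monotonicity of $\c$, yields $\cgnm[\mu]{f}{n+1}\le\cgnm[\nu]{f}{n+1}$.

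Part (c) is the substantive one. I would first record the level sets of $f=a\mI{A}$: the set $\{a\mI{A}\ge t\}$ equals $X$ for $t=0$, equals $A$ for $0<t\le a$, and equals $\emptyset$ for $t>a$. The base case is the direct computation $\cgnm{a\mI{A}}{1}=\rIm{a\mI{A}}=a\wedge\mu(A)$, so I set $s_1(a,b):=a\wedge b$, which is visibly nondecreasing and satisfies $s_1(a,0)=s_1(0,b)=0$. For the step I substitute $\cgnm{a\mI{A}}{n}=s_n(a,\mu(A))$ into the recurrence and split the supremum along the three ranges above. Writing $b:=\mu(A)$ and $c:=s_n(a,b)$, the range $t>a$ contributes $0$, the range $0<t\le a$ contributes $\sup_{0<t\le a}\big((t\c c)\wedge b\big)=(a\c c)\wedge b$ (the map $t\mapsto t\c c$ being nondecreasing, the supremum is attained at $t=a$ when $a>0$), and $t=0$ contributes $(0\c c)\wedge\mu(X)$. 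Thus $\cgnm{a\mI{A}}{n+1}$ is the maximum of these, which must be shown to equal $(a\c c)\wedge b=:s_{n+1}(a,b)$.

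The main obstacle, and the only delicate point, is dominating the term $(0\c c)\wedge\mu(X)$, since $\mu(X)$ may be large or infinite and so the minimum with $\mu(X)$ does not cap this term at $b$ on its own. I would resolve it with an auxiliary fact proved by a parallel induction: $s_n(a,b)\le b$ for all $n$ (clear for $s_1$, and inherited from $s_{n+1}(a,b)=(a\c s_n(a,b))\wedge b\le b$). Then admissibility gives $0\c c\le c=s_n(a,b)\le b$, while monotonicity of $\c$ gives $0\c c\le a\c c$; together $(0\c c)\wedge\mu(X)\le(a\c c)\wedge b$, so the $t=0$ term is absorbed and the supremum is exactly $(a\c c)\wedge b=s_{n+1}(a,b)$. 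The degenerate case $a=0$ (empty range $0<t\le a$) collapses to $s_{n+1}(0,b)=(0\c 0)\wedge b=0$ using $0\c 0\le 0$, matching the boundary condition. Finally I would verify that $s_{n+1}$ inherits nondecreasingness in both arguments and the conditions $s_{n+1}(a,0)=s_{n+1}(0,b)=0$ directly from its recurrence, the inductive hypothesis on $s_n$, and monotonicity of $\c$, which completes the induction and shows that $\cgnm{f}{n}$ satisfies $(C_3)$.
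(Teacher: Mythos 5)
Your proposal is correct and follows essentially the same route as the paper: parts (a) and (b) by termwise comparison under the supremum using monotonicity of $\c$, and part (c) by induction with the supremum split at $t=0$, the $t=0$ term absorbed via $0\c s_n(a,b)\le s_n(a,b)\le b\le\mu(X)$ and $0\c s_n(a,b)\le a\c s_n(a,b)$. The auxiliary bound $s_n(a,b)\le b$ that you isolate as the delicate point is exactly the inequality the paper invokes inline, so there is nothing to add.
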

\begin{proof} 
Properties $(a)$ and $(b)$ follow immediately from 
Definition~\ref{def_upper_sug} and monotonicity of $\c.$
We shall prove by induction that $(c)$ holds. 
Clearly, $\rIm{a\mI{A}}=a\wedge b$ with $b=\mu(A).$
If (c) is true for some $n\ge 1,$ by monotonicity of function
$t\mapsto t\c b$ we have 
\begin{align*}
\cgnm{a\mI{A}}{n+1}&=\big[(0\c \cgnm{a\mI{A}}{n})\wedge \mu(X)\big] \vee \sup_{t\in (0,\bar{y}]}\big\{ (t\c \cgnm{a\mI{A}}{n})\wedge \mu(\{a\mI{A}\ge t\})\big\}\notag
\\&=(0\c s_{n}(a,b))\vee\big[(a\c s_n(a,b))\wedge b\big]\notag
\\&=(a\c s_n(a,b))\wedge b=s_{n+1}(a,b),
\end{align*}  
as $0\c s_n(a,b)\le s_n(a,b)= \cgnm{a\mI{A}}{n}\le b \le \mu(X)$ and $0\c s_n(a,b)\le a\c s_n(a,b).$
The induction hypothesis implies that $s_{n+1}$ is nondecreasing 
and  $s_{n+1}(a,0)=s_{n+1}(0,b)=0,$ as desired.  
\end{proof}

Now, we provide other properties of the upper $n$-Sugeno integral.  From now on, to shorten the notation, we write $\cgn{f}{n}$ and $\rI{f}$ instead of $\cgnm{f}{n}$ and  $\rIm{f},$ respectively f there is no ambiguity.

\begin{pro}\label{pro3.3}
\begin{enumerate}[noitemsep]
\item[(a)]    
Let $(\mu,f)\in\cMm\times \cFf.$
If $\rI{f}=0,$ then $\cgn{f}{n}=0$ for all $n.$ Moreover, if $\cgn{f}{k}=0$ for some  $k>1$ and 
$a\c b>0$ for all $a,b>0,$ then $\cgn{f}{n}=0$ for  any $n.$
\item[(b)] 
$\cgn{af}{n}\le a\,\cgn{f}{n}$ for some $a>1$ and 
for all $(\mu,af)\in\cMm\times \cFf$ and $n\ge 1$
provided that $(ax)\c (ay)\le a(x\c y)$ for all  $ax,ay\in Y.$
Moreover, $\cgn{af}{n}\ge a\,\cgn{f}{n}$ for some $a\in (0,1)$ and 
all $(\mu,f)\in\cMm\times \cFf$ whenever 
$(ax)\c (ay)\ge a(x\c y)$ for all $x,y\in Y.$ 
\item[(c)] $\mu(A)=\cgn{\bar{y}\mI{A}}{n}$ for any $A\in \cA$ and $n\ge 1$ whenever $\mu(X)\le \bar{y}$ and
$\bar{y}\c b\ge b$ for all  $b\in Y.$
\item[(d)] (Idempotency) 
$\cgn{a\mI{X}}{n}=a$ for all $a\in Y$ and $n\ge 1$ if and only if $\mu(X)\ge \bar{y}$ and $a\c a=a$ for any $a\in Y.$ 
\end{enumerate}
\end{pro}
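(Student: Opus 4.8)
The plan is to handle the four parts separately, with induction on $n$ as the common tool and Proposition~\ref{pro3.2}(c)---which identifies $\cgn{a\mI{A}}{n}$ with $s_n(a,\mu(A))$ and records $s_1(a,b)=a\wedge b$ together with $s_{n+1}(a,b)=(a\c s_n(a,b))\wedge b$---as the workhorse for the indicator-type claims (c) and (d).

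For (a), I would first treat $\rI{f}=0$. By Lemma~\ref{lemat}(e) this means $\mu(\{f\ge t\})=0$ for every $t>0$, and then induction gives $\cgn{f}{n}=0$: in the recurrence the term at $t=0$ vanishes since $0\c\cgn{f}{n}\le\cgn{f}{n}=0$, while every term at $t>0$ is annihilated by $\mu(\{f\ge t\})=0$. For the ``moreover'' part I would argue by contraposition that positivity propagates: assuming $a\c b>0$ whenever $a,b>0$, if $\rI{f}>0$ then, by Lemma~\ref{lemat}(e), some $t_0>0$ satisfies $\mu(\{f\ge t_0\})>0$, and from $\cgn{f}{n}=c>0$ the single term at $t_0$ yields $\cgn{f}{n+1}\ge(t_0\c c)\wedge\mu(\{f\ge t_0\})>0$; hence $\cgn{f}{n}>0$ for all $n$. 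Contraposing, $\cgn{f}{k}=0$ for some $k$ forces $\rI{f}=0$, and the first part closes the argument.

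For (b) the key is $\{af\ge t\}=\{f\ge t/a\}$, so the substitution $t=as$ rewrites the definition as $\cgn{af}{n+1}=\sup_{s}\{(as\c\cgn{af}{n})\wedge\mu(\{f\ge s\})\}$, the supremum taken over $s$ with $as\in Y$. Writing $c_n=\cgn{f}{n}$ and $d_n=\cgn{af}{n}$, I would prove $d_n\le a\,c_n$ (for $a>1$) by induction, using the elementary facts $as\wedge b\le a(s\wedge b)$ and $aP\wedge Q\le a(P\wedge Q)$, valid for $a>1$, at the base step and after invoking the hypothesis $(as)\c(a c_n)\le a(s\c c_n)$ in the inductive step; the reverse inequalities, valid for $a\in(0,1)$, give $d_n\ge a\,c_n$ in the second case. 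I expect the main obstacle to be a range issue: when $\bar{y}<\infty$ the intermediate quantity $a c_n$ need not lie in $Y$, so the structural hypothesis on $\c$ cannot be applied at the argument $c_n$ directly. I would bypass this by setting $m=\min(c_n,\bar{y}/a)$; then $d_n\le a c_n$ and $d_n\le\bar{y}$ give $d_n\le am\in Y$, so monotonicity of $\c$ and the hypothesis applied at $m$, combined with $s\c m\le s\c c_n$, restore the bound. For $\bar{y}=\infty$ this complication is absent.

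Finally, (c) and (d) reduce to evaluating the recurrence for $s_n$. For (c) I put $a=\bar{y}$ and $b=\mu(A)$; since $\mu(A)\le\mu(X)\le\bar{y}$ we have $s_1(\bar{y},b)=\bar{y}\wedge b=b$, and $\bar{y}\c b\ge b$ makes $s_{n+1}(\bar{y},b)=(\bar{y}\c b)\wedge b=b$, whence $\cgn{\bar{y}\mI{A}}{n}=\mu(A)$. For (d) with $b=\mu(X)$, the direction $(\Leftarrow)$ is the same induction, using $a\le\bar{y}\le\mu(X)$ to get $s_1=a$ and $a\c a=a$ to keep $s_{n+1}(a,b)=(a\c a)\wedge b=a$. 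For $(\Rightarrow)$ I would read the constraints off small $n$: the case $n=1$ gives $a\wedge\mu(X)=a$ for all $a\in Y$, i.e.\ $\mu(X)\ge\bar{y}$; then $n=2$ gives $(a\c a)\wedge\mu(X)=a$, and since $a\c a\le\bar{y}\le\mu(X)$ the minimum equals $a\c a$, forcing $a\c a=a$ for every $a\in Y$.
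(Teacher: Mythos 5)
Your proof is correct and follows essentially the same route as the paper's: Lemma~\ref{lemat}(e) plus induction for (a), the substitution $t=as$ plus induction for (b), and evaluation of the recurrence $s_{n+1}(a,b)=(a\c s_n(a,b))\wedge b$ from Proposition~\ref{pro3.2}(c) for (c) and (d). Your truncation $m=\min(\cgn{f}{n},\bar{y}/a)$ in (b) carefully handles the case $a\,\cgn{f}{n}>\bar{y}$, a range subtlety the paper's own inductive step passes over silently.
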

\begin{proof}
$(a)$ If $\rI{f}=0,$ then 
by Lemma~\ref{lemat}\,(e) we have $\mu(\{ f\ge t\})=0$ for all $t>0.$ Hence
\begin{align*}
\cg{f}=\big[(0\c 0)\wedge \mu(X)\big]\vee \sup_{t\in (0,\bar{y}]}\{ (t\c 0)\wedge 0\}=0,
\end{align*}
as $0\c 0=0.$ Applying the induction, we will prove that $\cgn{f}{n}=0$ for all $n.$

Assume that $\cgn{f}{k}=0$ for some $k>1.$ Thus $t\c \cgn{f}{k-1}=0$ for all $t>0$ or $\mu(\{ f\ge t\})=0$ for all $t>0.$
Suppose that $\cgn{f}{k-1}>0.$ Then by the assumption on $\c,$ we have  $\mu(\{ f\ge t\})=0$ for all $t>0,$ so 
$\rI{f}=0,$ which implies that $\cgn{f}{k-1}=0,$ a~contradiction. Therefore $\cgn{f}{k-1}=0,$ which leads to $\rI{f}=0,$ and 
so $\cgn{f}{n}=0$ for all $n.$
 
\noindent The proof of part $(b)$ is again by induction on $n.$ Clearly, 
\begin{align*}
\rI{af}&=\sup_{t\in Y}
\big\{(a(t/a))\wedge \mu(\{f\ge t/a\})\big\}
\\&\le \sup_{s\le \bar{y}/a}\big\{(as)\wedge (a\mu(\{f\ge s\}))\big\}=a\,\rI{f}
\end{align*} 
for $a>1.$ 
Assume that the assertion holds for some $n\ge 1.$ Then, by induction hypothesis, 
\begin{align*} 
\cgn{af}{n+1}&=\sup_{s\le \bar{y}/a}\big\{\big( (as)\c \cgn{af}{n}\big)\wedge \mu(\{ f\ge s\})\big\}\\
&\le \sup_{s\le \bar{y}/a}\big\{\big( (as)\c (a\cgn{f}{n})\big)\wedge \big(a\mu(\{ f\ge s\})\big)\big\}\\
&\le a\sup_{s\le \bar{y}/a}\big\{(s\c \cgn{f}{n} )\wedge \mu(\{ f\ge s\})\big\}= a\,\cgn{f}{n+1}.
\end{align*}
The proof for the case $0<a<1$ is analogous. To prove (c) and (d) one can use Proposition~\ref{pro3.2}\,(c).
\end{proof} 

\begin{pro}\label{pro3.4}
The sequence $(\cgn{f}{n})_{n\ge 1}$  is nondecreasing for all 
$(\mu,f)\in \cMm\times\cFf$
 if and only if   
 $a\c b\ge a\wedge b$ for all $a,b\in Y.$
\end{pro}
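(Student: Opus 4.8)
The plan is to prove both implications separately: the sufficiency by induction on $n,$ and the necessity by evaluating the integrals on a single, carefully chosen test pair $(\mu,f)$ and then reading off the inequality.

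For the sufficiency direction, I would assume $a\c b\ge a\wedge b$ on $Y$ and write $I_n:=\cgn{f}{n}.$ The first task is the base inequality $\cg{f}\ge\rI{f};$ the hypothesis gives $t\c I_1\ge t\wedge I_1$ for every $t,$ so
\begin{align*}
I_2 &= \sup_{t\in Y}\{(t\c I_1)\wedge \mu(\{f\ge t\})\}\\
&\ge \sup_{t\in Y}\{I_1\wedge t\wedge \mu(\{f\ge t\})\}
= I_1\wedge \rI{f}=I_1,
\end{align*}
where the penultimate equality uses that $\wedge$ commutes with $\sup$ in $[0,\infty]$ together with $I_1=\rI{f}.$ For the induction step, assuming $I_n\ge I_{n-1},$ monotonicity of $\c$ in its second argument gives $t\c I_n\ge t\c I_{n-1}$ for each $t;$ taking the minimum with $\mu(\{f\ge t\})$ and then the supremum over $t$ yields $I_{n+1}\ge I_n.$ Thus only the base case consumes the full hypothesis, while the step rests on monotonicity of $\c$ alone.

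For the necessity direction, I would suppose the sequence is nondecreasing for every $(\mu,f)$ and extract the condition from the weakest consequence $\cg{f}\ge\rI{f}.$ Fix $x,y\in Y$ with $0<y\le x,$ take the constant function $f=x\mI{X}$ and a monotone measure with $\mu(X)=y.$ By Proposition~\ref{pro3.2}(c) applied with $A=X$ one gets $\rI{f}=s_1(x,y)=x\wedge y=y$ and $\cg{f}=s_2(x,y)=(x\c(x\wedge y))\wedge y=(x\c y)\wedge y.$ The assumed monotonicity forces $(x\c y)\wedge y\ge y,$ i.e.\ $x\c y\ge y$ whenever $y\le x.$ To reach the stated inequality for an arbitrary pair $a,b,$ I would split on their order: if $b\le a,$ this is exactly $a\c b\ge b=a\wedge b;$ if $a<b,$ monotonicity of $\c$ combined with the diagonal case $a\c a\ge a$ gives $a\c b\ge a\c a\ge a=a\wedge b.$ The degenerate cases $a=0$ or $b=0$ are immediate, since then $a\wedge b=0$ and $\c$ takes values in $[0,\infty].$

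The main obstacle is precisely this last bridging in the necessity direction: the most economical test function, a constant, only certifies $x\c y\ge y$ in the regime $y\le x,$ so the argument is incomplete until one notices that the remaining regime $y>x$ is recovered from monotonicity of $\c$ via the diagonal inequality $a\c a\ge a.$ A secondary point to state carefully is the commutation of $\wedge$ with the supremum used in the base case of the sufficiency direction, which is a standard fact in the complete lattice $[0,\infty]$ but is the step that turns $t\c I_1\ge t\wedge I_1$ into $I_2\ge I_1.$
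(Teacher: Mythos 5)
Your proof is correct and follows essentially the same route as the paper: sufficiency by induction with the identical base-case computation $\cg{f}\ge \sup_{t}\{(t\wedge \rI{f})\wedge\mu(\{f\ge t\})\}=\rI{f}$, and necessity by evaluating on a scaled indicator via Proposition~\ref{pro3.2}\,(c). The only cosmetic difference is at the end of the necessity direction, where the paper closes the gap for all $a,b$ at once with the chain $a\c b\ge (a\c b)\wedge b\ge (a\c(a\wedge b))\wedge b\ge a\wedge b,$ while you split into the cases $b\le a$ and $a<b$ and invoke the diagonal inequality $a\c a\ge a$; both are valid.
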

\begin{proof}
 ``$\Rightarrow$'' 
By Proposition~\ref{pro3.2}\,(c) we get $\cg{a\mI{A}}=
(a\c(a\wedge b))\wedge b,$ where $A\in \cA,$ $a\in Y$ and  $b=\mu(A).$ Since $\cgn{a\mI{A}}{2}\ge  \cgn{a\mI{A}}{1},$ we have $
(a\c (a\wedge b))\wedge b\ge a\wedge b$
for all $a\in Y,$ $b\in \mu(\cA)\cap Y$ and any monotone measure $\mu.$ 
Hence, 
$a\c b\ge (a\c b)\wedge b \ge (a\c(a\wedge b))\wedge b\ge a\wedge b$
for all $a,b\in Y.$ 

\noindent ``$\Leftarrow$''  By assumption
$\cg{f}\ge \sup _{t\in Y}\big\{(t\wedge \rI{f})\wedge \mu(\{f\ge t\})\big\}=\rI{f}.$
Suppose  that $\cgn{f}{n}\ge\cgn{f}{n-1}$ for some $n>1.$ By the monotonicity $t\mapsto a\circ t$ for all $a$ and the induction hypothesis, we obtain
\begin{align*}
\cgn{f}{n+1}&=\sup_{t\in Y}\big\{(t\c \cgn{f}{n})\wedge \mu(\{ f\ge t\})\big\}\\&\ge \sup_{t\in Y}\big\{(t\c \cgn{f}{n-1})\wedge \mu(\{ f\ge t\})\big\}
=\cgn{f}{n},
\end{align*}
thus $(\cgn{f}{n})_{n\ge1}$ is a~nondecreasing sequence.
\end{proof}

Let $(\mu,f)\in \cMm\times \cFf.$  Recall that  
\begin{align}\label{sug_rown1}
\rI{f}=\inf_{t\in Y} \{t\vee \mu (\{f> t\})\}=\sup_{A\in\cA}\{\inf_{x\in A} f(x) \wedge \mu(A)\},
\end{align}
see \cite{bo,sug,wang}. We present formulas for the  upper $n$-Sugeno integral, which have the forms as in \eqref{sug_rown1}.

\begin{tw}\label{tw3.5} Let $\c$ be an~admissible fusion map that is continuous in the first argument.  
Then for all $(\mu,f)\in \cMm \times \cFf$ and $n\ge 2$ we have
\begin{align*}
\cgn{f}{n}=\inf_{t\in Y} \big\{\big(t\c\cgn{f}{n-1}\big)\vee \mu (\{f> t\})\big\}.
\end{align*}
\end{tw}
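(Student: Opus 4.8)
Write $\cgn{f}{n-1}$ for the (already well-defined) constant appearing in the recursion and set $\phi(t):=t\c\cgn{f}{n-1}$; by the hypotheses $\phi$ is nondecreasing and continuous on $Y$. Denote by $R$ the right-hand side of the asserted identity. Since by definition $\cgn{f}{n}=\sup_{t\in Y}\{\phi(t)\wedge\mu(\{f\ge t\})\}$, the plan is to prove the two inequalities $\cgn{f}{n}\le R$ and $R\le\cgn{f}{n}$ separately, exactly as one does for the classical formula \eqref{sug_rown1}.

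For $\cgn{f}{n}\le R$ I would argue termwise, showing $\phi(s)\wedge\mu(\{f\ge s\})\le\phi(t)\vee\mu(\{f>t\})$ for all $s,t\in Y$. If $s\le t$ then $\phi(s)\le\phi(t)$ by monotonicity, so the left side is $\le\phi(s)\le\phi(t)$. If $s>t$ then $\{f\ge s\}\subseteq\{f>t\}$, whence $\mu(\{f\ge s\})\le\mu(\{f>t\})$ and the left side is $\le\mu(\{f\ge s\})\le\mu(\{f>t\})$. Taking the supremum over $s$ and then the infimum over $t$ yields $\cgn{f}{n}\le R$.

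The reverse inequality $R\le\cgn{f}{n}$ is the \emph{heart of the matter}. First I would record the key implication: if $\phi(t)>\cgn{f}{n}$ then $\mu(\{f\ge t\})\le\cgn{f}{n}$, since otherwise $\phi(t)\wedge\mu(\{f\ge t\})>\cgn{f}{n}$ would contradict the definition of $\cgn{f}{n}$ as a supremum. As $\mu(\{f>t\})\le\mu(\{f\ge t\})$, this gives $\phi(t)\vee\mu(\{f>t\})=\phi(t)$ whenever $\phi(t)>\cgn{f}{n}$. Next I would note the a priori bound $\cgn{f}{n-1}\le\mu(X)$, which is immediate because every term of its defining supremum is a minimum with $\mu(\{f\ge t\})\le\mu(X)$; hence $\phi(0)=0\c\cgn{f}{n-1}\le\cgn{f}{n-1}\le\mu(X)$, and therefore $\cgn{f}{n}\ge\phi(0)\wedge\mu(X)=\phi(0)$. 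Consequently the up-set $U:=\{t\in Y:\phi(t)>\cgn{f}{n}\}$ never contains $0$. If $U=\emptyset$, taking $t=\bar{y}$ (so that $\mu(\{f>\bar{y}\})=\mu(\emptyset)=0$) gives $R\le\phi(\bar{y})\le\cgn{f}{n}$. If $U\neq\emptyset$, continuity of $\phi$ together with $0\notin U$ forces $U=(\tau,\bar{y}]$ with $\tau:=\inf U$ and $\phi(\tau)\le\cgn{f}{n}$, so that $R\le\inf_{t\in U}\phi(t)=\lim_{t\searrow\tau}\phi(t)=\phi(\tau)\le\cgn{f}{n}$, completing the proof.

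The main obstacle is precisely the behaviour of $\phi$ at the threshold $\tau$ where it crosses the level $\cgn{f}{n}$: it is the continuity of $\c$ in its first argument that rules out a jump of $\phi$ across this level and thereby guarantees both $\phi(\tau)\le\cgn{f}{n}$ and $\inf_{t\in U}\phi(t)=\phi(\tau)$. The subtler point to get right is excluding the degenerate possibility $\phi(0)>\cgn{f}{n}$, which would break the identity at the left endpoint; for this the a priori bound $\cgn{f}{n-1}\le\mu(X)$, forcing $0\notin U$, is indispensable.
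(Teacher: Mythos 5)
Your proof is correct, but it follows a genuinely different route from the paper's. The paper proves Theorems~\ref{tw3.5} and~\ref{tw4.6} simultaneously: it sets up a sup-defined sequence $S_n$ and an inf-defined sequence $Z_n$, proves $S_n=Z_n$ by induction, and in the induction step shows that the two threshold sets $I=\{t\colon \mu(\{f\ge t\})\ge t\c S_n\}$ and $J=\{t\colon \mu(\{f>t\})\ge t\c S_n\}$ are down-sets with the same right endpoint $a$, whence both $S_{n+1}$ and $Z_{n+1}$ collapse to the single value $a\c S_n$. You instead fix the constant $c=\cgn{f}{n-1}$, prove the inequality $\le$ by a termwise two-case comparison of $\phi(s)\wedge\mu(\{f\ge s\})$ against $\phi(t)\vee\mu(\{f>t\})$, and prove $\ge$ by analyzing the open up-set $U=\{t\colon\phi(t)>\cgn{f}{n}\}$, invoking continuity only to identify $\inf_{t\in U}\phi(t)$ with $\phi(\tau)$ at the threshold $\tau=\inf U$. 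Your argument is non-inductive (legitimately so, since the right-hand side involves only the already-defined $\cgn{f}{n-1}$) and your treatment of the left endpoint is simpler: you only need $\phi(0)\le\cgn{f}{n-1}\le\mu(X)$ to exclude $0$ from $U$, whereas the paper must establish $0\in J$, i.e.\ $0\c S_n\le\mu(\{f>0\})$, via a chain of inequalities through the inf-representation. What you lose relative to the paper is reuse: the paper's joint scheme delivers Theorem~\ref{tw4.6} (the $\st$-version) with no extra work, while your argument would have to be repeated, mutatis mutandis, with the roles of the two coordinates exchanged.
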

\begin{proof}
\noindent The proof is given in Appendix.
\end{proof}

\begin{tw}\label{tw3.6}
For all $(\mu,f)\in \cMm \times \cFf$ and $n\ge 2$ we have
\begin{align*}
\cgn{f}{n}=\sup_{A\in\cA}\big\{\big(\inf_{x\in A} f(x)\c\cgn{f}{n-1}\big) \wedge \mu(A)\big\}.
\end{align*}
\end{tw}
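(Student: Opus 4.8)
The plan is to regard $c:=\cgn{f}{n-1}$ as a fixed element of $Y$ (legitimate since the well-definedness established after Definition~\ref{def_upper_sug} gives $\cgn{f}{n-1}\in Y$) and to exploit only that the map $t\mapsto t\c c$ is nondecreasing, which holds because $\c$ is an admissible fusion map. With this notation the recurrence in Definition~\ref{def_upper_sug} reads $\cgn{f}{n}=\sup_{t\in Y}\{(t\c c)\wedge\mu(\{f\ge t\})\}$, so the claim is precisely the analogue of the second equality in \eqref{sug_rown1} with the integrand $t$ replaced by $t\c c$. I would therefore follow the classical pattern and prove the two inequalities between $\cgn{f}{n}$ and $R:=\sup_{A\in\cA}\{(\inf_{x\in A}f(x)\c c)\wedge\mu(A)\}$ separately; in contrast with Theorem~\ref{tw3.5}, no continuity of $\c$ is required here.

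For $R\ge\cgn{f}{n}$, I would fix $t\in Y$ and test $R$ against the single set $A_t:=\{f\ge t\}$, which lies in $\cA$ by measurability of $f$. Then $\inf_{x\in A_t}f(x)\ge t$, so monotonicity of $\c$ in the first argument gives $(\inf_{x\in A_t}f(x))\c c\ge t\c c$, while $\mu(A_t)=\mu(\{f\ge t\})$. Hence $(\inf_{x\in A_t}f(x)\c c)\wedge\mu(A_t)\ge(t\c c)\wedge\mu(\{f\ge t\})$, and the left-hand side is at most $R$; taking the supremum over $t\in Y$ yields $R\ge\cgn{f}{n}$.

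For the reverse inequality $\cgn{f}{n}\ge R$, I would fix a nonempty $A\in\cA$ (the empty set contributes $0$ to $R$ because $\mu(\emptyset)=0$) and put $s:=\inf_{x\in A}f(x)$. Since $A\neq\emptyset$ and $f$ maps into the closed set $Y=[0,\bar y]$, we have $s\in Y$, so $s$ is an admissible value of the parameter $t$ in the recurrence. Every $x\in A$ satisfies $f(x)\ge s$, i.e.\ $A\subseteq\{f\ge s\}$, whence $\mu(\{f\ge s\})\ge\mu(A)$ by monotonicity of $\mu$. Consequently $\cgn{f}{n}\ge(s\c c)\wedge\mu(\{f\ge s\})\ge(s\c c)\wedge\mu(A)=(\inf_{x\in A}f(x)\c c)\wedge\mu(A)$, and the supremum over $A\in\cA$ gives $\cgn{f}{n}\ge R$.

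The proof carries no genuine analytic difficulty; the only points deserving attention are the degenerate conventions and the substitution that drives the reverse inequality. Specifically, empty sets (and more generally sets of measure zero) may be discarded from the supremum defining $R$, and the case $\{f\ge t\}=\emptyset$ in the first inequality forces both matched terms to $0$. The one verification that genuinely matters is that $s=\inf_{x\in A}f(x)\in Y$ for nonempty $A$, since this is exactly what permits the direct substitution $t=s$ and thereby avoids any limiting or continuity argument. Combining the two inequalities completes the proof.
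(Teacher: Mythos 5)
Your proof is correct and follows essentially the same route as the paper: both directions are obtained by testing the supremum over $t$ against the level set $A_t=\{f\ge t\}$ and, conversely, the supremum over $A$ against $t_0=\inf_{x\in A}f(x)$ with $A\subseteq\{f\ge t_0\}$, using only monotonicity of $\c$ in its first argument and no continuity. Your extra care with the empty set (and the observation that $\inf_{x\in A}f(x)\in Y$ for nonempty $A$) is a minor tidying of a point the paper leaves implicit, not a different argument.
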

\begin{proof} Arguing  as in the proof of  Theorem $2.2$ in \cite{hutnik1},
we shall show  more than it is needed, i.e.,
if a~map  $\c$ is {\it nondecreasing in the first coordinate}, then 
\begin{align}\label{m1}
\sup_{t\in Y} \{(t\c a)\wedge \mu(\{f\ge t\})\}=\sup_{A\in\cA}\{(\inf_{x\in A} f(x)\c a) \wedge \mu(A)\}
\end{align}
for all $a\in Y.$ In fact, 
let $t\in Y$ and $A_t=\{f\ge t\}.$ Thus, $\inf_{x\in A_t} f(x)\ge t$ and
\begin{align*}
(t\c a)\wedge \mu(\{f\ge t\})\le (\inf_{x\in A_t} f(x)\c a)\wedge \mu(A_t)\le \sup_{A\in\cA}\{(\inf_{x\in A} f(x)\c a) \wedge \mu(A)\}.
\end{align*}
Therefore, the left hand side in \eqref{m1} is not greater than the right one. Let $A\in\cA$ and $t_0=\inf_{x\in A} f(x).$ Then 
$A\subset \{f\ge t_0\}$ and
\begin{align*}
(\inf_{x\in A} f(x)\c a)\wedge\mu(A)\le (t_0\c a)\wedge\mu(\{f\ge t_0\})\le \sup_{t\in Y}\{(t\c a)\wedge\mu(\{f\ge t\})\}.
\end{align*}
Thus, the left hand side in \eqref{m1} is  greater than or equal to the right one, which finishes the proof.
\end{proof}

The most important property of an integral is subadditivity. The following  concept was introduced in \cite{bo}.

\begin{definition}\label{defsub}
Let $\mu\in \cMm.$ We say that $f,g\in\cFf$ are $\mu$-{\it subadditive} with respect to a~map  $\trdd\colon \mu (\cA)\times \mu(\cA)\to \mu (\cA)$ (\textit{$\mu$-$\trdd$-subadditive} for short) if 
\begin{align*}
\mu(\{f> a\}\cup \{g> b\})\le \mu(\{f> a\})\trd \mu(\{g> b\})
\end{align*}for all $a,b\in Y.$
\end{definition}
Note that all functions $f,g$ are $\mu$-$\trdd$-subadditive if  $a\trd b=(a+b)\wedge \mu(X)$ whenever the~monotone measure $\mu$ is {\it subadditive}, that is, $\mu(A\cup B)\le\mu(A)+\mu(B)$ for all $A,B\in\cA.$ Recall that  $f,g\in\cFf$ are {\it comonotone} 
if $(f(x)-f(y))(g(x)-g(y))\ge 0$ for all $x,y\in X.$ 
Equivalently, $f$ and $g$ are comonotone, if for any  $t\in Y$ either $\{ f> t\} \subset  \{ g> t\}$ or $\{g> t\}\subset \{ f> t\}.$ Thus, comonotone functions are $\mu$-$\trdd$-subadditive 
for
$\trdd\ge \vee.$   
For more examples of $\mu$-$\trdd$-subadditive functions we refer to \cite{bo}.

\begin{tw}\label{tw3.8}
Assume that  $\trdd\colon \mu(\cA)\times \mu(\cA)\to \mu(\cA),$ $f,g\in\cFf$ and
\begin{align}\label{M}
[((a+ b)\wedge \bar{y})\odot ((c+d)\wedge \bar{y})]\vee (\alpha \trd \beta)&\le [(a\odot c)\vee \alpha]+ [(b\odot d)\vee \beta]
\end{align} 
for  $a,b,c,d\in Y$  and $\alpha,\beta\in \mu(\cA)$ such that $a+b,c+d\in Y$ with
$\odot\in \{\c,\mathrm{P}\},$
where the admissible fusion map $\c$ is continuous in the first argument and $x \mathrm{P} y=x$ for any $x,y.$
If $f,g$ are $\mu$-$\trdd$-subadditive and  $f+g\in \cFf,$ then 
\begin{align}\label{Minkowski}
\cgn{f+g}{n}\le\cgn{f}{n}+ \cgn{g}{n}
\end{align}
for $n\ge 1.$
Moreover, $\mu$ is a~subadditive monotone measure whenever there is some $n\ge 1$ such that  \eqref{Minkowski} holds for all $f,g\in \cFf$ with $\mu(X)\le \bar{y}$ and  $\bar{y}\c b\ge b$ for all $b\in Y.$
\end{tw}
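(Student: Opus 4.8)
The plan is to prove \eqref{Minkowski} by induction on $n$, exploiting throughout the pointwise inclusion $\{f+g>a+b\}\subseteq\{f>a\}\cup\{g>b\}$, which combined with $\mu$-$\trdd$-subadditivity gives $\mu(\{f+g>a+b\})\le\mu(\{f>a\})\trd\mu(\{g>b\})$, together with the ``$\inf$'' representations of the integrals. Write $\alpha=\mu(\{f>a\})$ and $\beta=\mu(\{g>b\})$. The two cases of $\odot$ in \eqref{M} serve the two parts of the induction: $\odot=\mathrm P$ handles the base case $n=1$ (the Sugeno integral), while $\odot=\c$ handles the inductive step. For $n=1$ I would use \eqref{sug_rown1}: for $a>\rI{f}$ and $b>\rI{g}$ Lemma~\ref{lemat}\,(b) gives $\alpha<a$ and $\beta<b$, so \eqref{M} with $\odot=\mathrm P$ and $c=d=0$ reduces to $((a+b)\wedge\bar{y})\vee(\alpha\trd\beta)\le a+b$; inserting the threshold $s=(a+b)\wedge\bar{y}$ into $\rI{f+g}=\inf_{s}\{s\vee\mu(\{f+g>s\})\}$ and using the inclusion yields $\rI{f+g}\le a+b$, and letting $a\searrow\rI{f}$, $b\searrow\rI{g}$ gives the base case (the degenerate case $\rI{f}=\bar y$ or $\rI{g}=\bar y$ being trivial).

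For the inductive step ($n\ge2$) I would invoke Theorem~\ref{tw3.5} in the form $\cgn{h}{n}=\inf_{t\in Y}\{(t\c\cgn{h}{n-1})\vee\mu(\{h>t\})\}$ for $h\in\{f,g,f+g\}$. Setting $\Phi(a)=(a\c\cgn{f}{n-1})\vee\mu(\{f>a\})$ and $\Psi(b)=(b\c\cgn{g}{n-1})\vee\mu(\{g>b\})$, so that $\inf_a\Phi(a)=\cgn{f}{n}$ and $\inf_b\Psi(b)=\cgn{g}{n}$, the core is the pointwise estimate
\[
\big[((a+b)\wedge\bar{y})\c\cgn{f+g}{n-1}\big]\vee\mu(\{f+g>(a+b)\wedge\bar{y}\})\le\Phi(a)+\Psi(b).
\]
Granting it, I would plug $s=(a+b)\wedge\bar{y}$ into the $\inf$ representation of $\cgn{f+g}{n}$ and take the infimum over $a,b$; since the right-hand side separates, $\inf_{a,b}\{\Phi(a)+\Psi(b)\}=\cgn{f}{n}+\cgn{g}{n}$, which is \eqref{Minkowski}. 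To prove the estimate, the second term on the left is at most $\alpha\trd\beta$ by the inclusion and $\mu$-$\trdd$-subadditivity, while for the first term the induction hypothesis $\cgn{f+g}{n-1}\le\cgn{f}{n-1}+\cgn{g}{n-1}$ and monotonicity of $\c$ bound it by $((a+b)\wedge\bar{y})\c((\cgn{f}{n-1}+\cgn{g}{n-1})\wedge\bar{y})$; applying \eqref{M} with $\odot=\c$, $c=\cgn{f}{n-1}$ and $d=\cgn{g}{n-1}$ then finishes it.

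The hard part will be the boundary bookkeeping when $\bar{y}<\infty$, because \eqref{M} is only assumed for $a+b,c+d\in Y$. When $\cgn{f}{n}+\cgn{g}{n}\ge\bar{y}$ the claim is immediate from $\cgn{f+g}{n}\le\bar{y}$, so one may assume the sum is strictly below $\bar{y}$. The remaining delicate points are to carry out the separation of infima under the constraint $a+b\le\bar{y}$ and to treat thresholds with $a+b>\bar{y}$: there $\{f+g>\bar{y}\}=\emptyset$ since $f+g\in\cFf$, and when $\cgn{f}{n-1}+\cgn{g}{n-1}\ge\bar{y}$ one must first replace $(c,d)$ by an admissible pair summing to $\bar{y}$ with $c\le\cgn{f}{n-1}$ and $d\le\cgn{g}{n-1}$ before invoking \eqref{M}. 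I expect this capping argument to be the main obstacle.

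Finally, for the converse I would argue as follows. Assume \eqref{Minkowski} holds for some $n$ under $\mu(X)\le\bar{y}$ and $\bar{y}\c b\ge b$ for all $b\in Y$; by Proposition~\ref{pro3.3}\,(c) these hypotheses yield $\cgn{\bar{y}\mI{C}}{n}=\mu(C)$ for every $C\in\cA$. For $A,B\in\cA$ take $f=\bar{y}\mI{A\setminus B}$ and $g=\bar{y}\mI{B}$; their supports are disjoint, so $f+g=\bar{y}\mI{A\cup B}\in\cFf$, and \eqref{Minkowski} together with monotonicity of $\mu$ gives $\mu(A\cup B)=\cgn{f+g}{n}\le\cgn{f}{n}+\cgn{g}{n}=\mu(A\setminus B)+\mu(B)\le\mu(A)+\mu(B)$, so $\mu$ is subadditive.
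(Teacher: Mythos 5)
Your proposal is correct and follows essentially the same route as the paper's proof: the base case via the infimum representation \eqref{sug_rown1} together with \eqref{M} for $\odot=\mathrm{P}$, the inductive step via Theorem~\ref{tw3.5} and \eqref{M} for $\odot=\c$ applied with $c=\cgn{f}{n-1}$, $d=\cgn{g}{n-1}$, and the converse via indicators of disjoint sets using Proposition~\ref{pro3.3}\,(c). The capping issue you flag is not a genuine obstacle: since the left-hand side of \eqref{M} carries $(a+b)\wedge\bar{y}$ and $(c+d)\wedge\bar{y}$, replacing a pair with sum exceeding $\bar{y}$ by a smaller admissible pair leaves the left side unchanged while the right side only grows by monotonicity, which is exactly the (silent) reduction the paper relies on.
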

\begin{proof} 
The proof is by induction on $n.$ Let $n=1.$   
Evidently, $\{f\le a\}\cap \{g\le b\}\subset \{f+g\le a+b\}.$ As $f(x)+g(x)\le \bar{y}$ for any $x$ and $f,g$ are $\mu$-$\trdd$-subadditive, we have
\begin{align*}
[(a+ b)\wedge \bar{y}]\vee \mu( \{f+g>(a+b)\wedge \bar{y}\})&\le [(a+ b)\wedge \bar{y}]\vee [\mu( \{f>a\})\trd \mu(\{g>b\})]
\\&\le [a\vee  \mu(\{ f>a\}) ]+[b\vee \mu(\{ g>b\})],
\end{align*}
where the last inequality holds by \eqref{M} with $x\odot y=x.$ By \eqref{sug_rown1} we get
\begin{align*}
\rI{f+g}\le [a\vee  \mu(\{ f>a\}) ]+[b\vee \mu(\{ g>b\})].
\end{align*}
Taking the lower bound for $a,b\in Y$ finishes the proof of \eqref{Minkowski} for $n=1.$

Assume that \eqref{Minkowski} holds for some $n>1.$
By $\mu$-$\trdd$-subadditivity and \eqref{M} with  $\odot=\c,$
we get 
\begin{align*}
\Big[\big[(a+ b)\wedge \bar{y}\big]\c&\big[(\cgn{f}{n}+ \cgn{g}{n})\wedge \bar{y}\big]\Big]\vee \mu( \{f+g>(a+b)\wedge \bar{y}\})\nonumber
\\&\le \Big[\big[(a+ b)\wedge \bar{y}\big]\c\big[(\cgn{f}{n}+ \cgn{g}{n})\wedge \bar{y}\big]\Big]\vee [\mu( \{f>a\})\trd \mu(\{g>b\})]
\\&\le \big[(a\c\cgn{f}{n})\vee  \mu(\{ f>a\}) \big]+\big[(b\c\cgn{g}{n})\vee \mu(\{ g>b\}) \big].
\end{align*}
By the induction hypothesis and $\cgn{f+g}{n}\le \bar{y},$
we have for all $a,b\in Y$
\begin{align*}
\big[[(a+ b)\wedge \bar{y}]\c\cgn{f+g}{n}\big]&\vee \mu( \{f+g>(a+b)\wedge \bar{y}\})\nonumber
\\&\le \big[(a\c\cgn{f}{n})\vee  \mu(\{ f>a\}) \big]+\big[(b\c\cgn{g}{n})\vee \mu(\{ g>b\}) \big].
\end{align*}
As a~consequence of Theorem~\ref{tw3.5} we obtain
\begin{align*}
\cgn{f+g}{n+1}\le \big[(a\c\cgn{f}{n})\vee  \mu(\{ f>a\}) \big]+\big[(b\c\cgn{g}{n})\vee \mu(\{ g>b\}) \big].
\end{align*}
Taking the lower  bound for $a$ and then for $b$ we finish the proof of \eqref{Minkowski}.

Suppose that~\eqref{Minkowski} is satisfied for 
some $n\ge 1.$
Then Proposition~\ref{pro3.3}\,(c) yields 
$\cgn{\bar{y}\mI{D}}{k}=\mu(D)$ for all $D$ and $k.$
Putting $f=\bar{y}\mI{A}$ and $g=\bar{y}\mI{B\backslash A}$ in~\eqref{Minkowski}, 
we obtain $
\mu(A\cup B)\le \mu(A)+\mu(B\backslash A)\le \mu(A)+\mu(B).$ This completes the proof.
\end{proof}

\begin{corollary}\label{cor1} 
If $\mu\in\cMm$ is subadditive
and $\mu(X)\le \bar{y},$ 
 then
\begin{align}\label{sub}
\cgn[+]{f+g}{n} \le \cgn[+]{f}{n}+\cgn[+]{g}{n}
\end{align}
for any $n\ge 1$ and all $f,g\in\cFf$ such that  $f+g\in \cFf.$
Moreover, if there is $n$ such that  \eqref{sub} holds for all $f,g\in \cFf,$ then 
$\mu$ is a~subadditive monotone measure. 
\end{corollary}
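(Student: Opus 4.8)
The plan is to obtain Corollary~\ref{cor1} as the specialization of Theorem~\ref{tw3.8} to the admissible fusion map $\c=+$ and the truncated addition $\alpha\trd\beta:=(\alpha+\beta)\wedge\mu(X)$. First I would record that $+$ is an admissible fusion map (it is nondecreasing and $0+a=a\le a$) which is continuous in its first argument, so the standing hypotheses on $\c$ in Theorem~\ref{tw3.8} are satisfied. Next, because $\mu$ is assumed subadditive, the remark following Definition~\ref{defsub} guarantees that \emph{every} pair $f,g\in\cFf$ is $\mu$-$\trdd$-subadditive for this $\trdd$. Thus the only genuine task is to verify inequality~\eqref{M} for $\odot\in\{+,\mathrm{P}\}$ with the above $\trdd$; once this is done, Theorem~\ref{tw3.8} yields~\eqref{sub} immediately for every $n\ge1$.

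The check of~\eqref{M} is the computational core, though it is elementary. Since~\eqref{M} is imposed under $a+b,c+d\in Y$, the outer truncations are vacuous: $(a+b)\wedge\bar{y}=a+b$ and $(c+d)\wedge\bar{y}=c+d$. For $\odot=+$ the left-hand side equals $[(a+c)+(b+d)]\vee[(\alpha+\beta)\wedge\mu(X)]$; its first term satisfies $(a+c)+(b+d)\le[(a+c)\vee\alpha]+[(b+d)\vee\beta]$, while its second satisfies $(\alpha+\beta)\wedge\mu(X)\le\alpha+\beta\le[(a+c)\vee\alpha]+[(b+d)\vee\beta]$, so the maximum is bounded by the right-hand side. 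For $\odot=\mathrm{P}$, where $x\mathrm{P}y=x$, the left-hand side reduces to $(a+b)\vee[(\alpha+\beta)\wedge\mu(X)]$, and the identical estimates give $a+b\le[a\vee\alpha]+[b\vee\beta]$ and $(\alpha+\beta)\wedge\mu(X)\le\alpha+\beta\le[a\vee\alpha]+[b\vee\beta]$. This establishes~\eqref{M}, and hence, via Theorem~\ref{tw3.8}, the subadditivity inequality~\eqref{sub}.

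For the converse I would invoke the ``moreover'' clause of Theorem~\ref{tw3.8}. Assuming that~\eqref{sub}, which is precisely~\eqref{Minkowski} for $\c=+$, holds for all $f,g\in\cFf$ for some $n$, it remains only to confirm the two side conditions of that clause: $\mu(X)\le\bar{y}$ belongs to the corollary's hypotheses, and $\bar{y}\c b=\bar{y}+b\ge b$ holds trivially for every $b\in Y$ since $\bar{y}\ge0$. Theorem~\ref{tw3.8} then forces $\mu$ to be subadditive; concretely this runs through Proposition~\ref{pro3.3}\,(c), which gives $\cgn[+]{\bar{y}\mI{D}}{n}=\mu(D)$, followed by testing~\eqref{sub} on $f=\bar{y}\mI{A}$ and $g=\bar{y}\mI{B\setminus A}$ to reach $\mu(A\cup B)\le\mu(A)+\mu(B)$. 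I do not expect a serious obstacle, since the argument is a pure specialization; the only point deserving a word of care is that the codomain $\mu(\cA)$ of $\trdd$ in Definition~\ref{defsub} should be read generously, as the proof of Theorem~\ref{tw3.8} only ever evaluates $\trdd$ on arguments drawn from $\mu(\cA)$ and never requires $\alpha\trd\beta$ itself to lie in $\mu(\cA)$.
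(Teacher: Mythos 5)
Your proposal is correct and follows the paper's own route: the paper likewise obtains the corollary by specializing Theorem~\ref{tw3.8} with $a\trd b=(a+b)\wedge\mu(X)$ (so that subadditivity of $\mu$ makes every pair of functions $\mu$-$\trdd$-subadditive) and checking \eqref{M}, the only difference being that the paper takes the truncated sum $a\c b=(a+b)\wedge\bar{y}$ so that $\c$ genuinely maps $Y\times Y$ into $Y$ when $\bar{y}<\infty$, whereas you work with plain $+$. Your verification of \eqref{M} is unaffected by this truncation (the standing hypothesis $a+b,c+d\in Y$ and the elementary bound $(u+v)\wedge\bar{y}\le (u\wedge\bar{y})+(v\wedge\bar{y})$ take care of it), so the argument stands as written.
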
 
\begin{proof}
The assertion follows from Theorem~\ref{tw3.8} for 
$a\c b=(a+b)\wedge \bar{y}$ and $a\trd b=(a+b)\wedge \mu(X).$ 
\end{proof}

\begin{corollary}\label{cor2} 
If $f,g\in\cFf$ are comonotone functions
and $\mu(\cA)\subset Y,$  then
$\cgn[+]{f+g}{n} \le \cgn[+]{f}{n}+\cgn[+]{g}{n}$
for all $n\ge 1$ and $f+g\in \cFf.$ 
\end{corollary}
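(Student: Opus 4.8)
My plan is to derive the inequality as a special case of Theorem~\ref{tw3.8}, mirroring the deduction of Corollary~\ref{cor1} but replacing the truncated-addition auxiliary map by $\trdd=\vee$. Recall that $\cgn[+]{f}{n}$ is built from the truncated addition $a\c b=(a+b)\wedge\bar{y}$, which is an admissible fusion map, continuous in its first argument, with $0\c a=a\wedge\bar{y}=a$ for $a\in Y$; thus the structural assumptions that Theorem~\ref{tw3.8} imposes on $\c$ are satisfied. For comonotone inputs I would take $\trdd=\vee$, and since $\alpha\vee\beta\in\{\alpha,\beta\}\subset\mu(\cA)$ this is a legitimate map $\mu(\cA)\times\mu(\cA)\to\mu(\cA)$.

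It then remains to check the two substantive hypotheses of Theorem~\ref{tw3.8}. The first, that $f$ and $g$ are $\mu$-$\vee$-subadditive, is exactly the observation recorded just before Theorem~\ref{tw3.8}, namely that comonotone functions are $\mu$-$\trdd$-subadditive for every $\trdd\ge\vee$. If one wishes to re-prove it, comonotonicity forces the superlevel sets to be nested even across different thresholds, because $x\in\{f>a\}\setminus\{g>b\}$ and $y\in\{g>b\}\setminus\{f>a\}$ would give $(f(x)-f(y))(g(x)-g(y))<0$; hence $\{f>a\}\cup\{g>b\}$ coincides with the larger of the two sets and monotonicity of $\mu$ yields $\mu(\{f>a\}\cup\{g>b\})=\mu(\{f>a\})\vee\mu(\{g>b\})$.

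The only genuine computation, and the step I expect to be the main (if elementary) obstacle, is verifying condition~\eqref{M} for $\trdd=\vee$ and both $\odot=\c$ and $\odot=\mathrm{P}$. Because $a+b,c+d\in Y$, the outer truncations are inactive and the left-hand side of~\eqref{M} becomes $[(a+b)\odot(c+d)]\vee\alpha\vee\beta$; I would bound its three constituents separately against the right-hand side $[(a\odot c)\vee\alpha]+[(b\odot d)\vee\beta]$. The summands $\alpha$ and $\beta$ are trivially dominated by $(a\odot c)\vee\alpha$ and $(b\odot d)\vee\beta$. For the $\odot$-term with $\odot=\mathrm{P}$ it equals $a+b\le(a\vee\alpha)+(b\vee\beta)$, while for $\odot=\c$ it equals $((a+c)+(b+d))\wedge\bar{y}$; a short case split on whether $(a+c)+(b+d)\le\bar{y}$ settles it, the truncated sub-case using that $\mu(\cA)\subset Y$ forces $\alpha,\beta\le\bar{y}$, so that $((a+c)\wedge\bar{y})\vee\alpha=\bar{y}$ once $a+c$ reaches $\bar{y}$.

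With both hypotheses in place, Theorem~\ref{tw3.8} delivers~\eqref{Minkowski} for the truncated addition and every $n\ge1$, which is precisely $\cgn[+]{f+g}{n}\le\cgn[+]{f}{n}+\cgn[+]{g}{n}$, as required. I do not expect to need the ``moreover'' (converse) direction of Theorem~\ref{tw3.8} here, nor the extra condition $\bar{y}\c b\ge b$ attached to it.
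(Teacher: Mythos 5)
Your proposal is correct and follows exactly the paper's route: the paper proves Corollary~\ref{cor2} by the one-line application of Theorem~\ref{tw3.8} with $a\c b=(a+b)\wedge\bar{y}$ and $\trdd=\vee$, relying on the remark preceding that theorem that comonotone functions are $\mu$-$\trdd$-subadditive for $\trdd\ge\vee$. Your explicit verifications of the admissibility of the truncated addition, of the $\mu$-$\vee$-subadditivity via nested superlevel sets, and of condition~\eqref{M} for both $\odot=\c$ and $\odot=\mathrm{P}$ are all sound and simply fill in details the paper leaves implicit.
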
 
\begin{proof}
Apply Theorem~\ref{tw3.8} for $a\c b=(a+b)\wedge\bar{y}$ and $\trdd=\vee.$
\end{proof}

Condition \eqref{M} is  valid if $\trdd\le +$ and $[(a+b)\wedge\bar{y}]\c[(c+d)\wedge\bar{y}]\le (a\c c)+(b\c d)$ for any  $a,b,c,d\in Y.$ Examples of maps $\c$ satisfying the last inequality are:
    \begin{enumerate}[noitemsep]
    \item[(i)] $a\c b=\lambda a$ for $Y=[0,1]$ and $\lambda \in (0,1],$ or $Y=[0,\infty]$ and $\lambda>0,$
    \item[(ii)] $a\c b=a^\gamma$ with $\gamma\in (0,1),$
    \item[(iii)] $a\c b=\lambda (a+b)+(1-\lambda)(a\vee b)$ for $\lambda\in [0,1]$ and $Y=[0,\infty],$
    \item[(iv)]  $a\c b=a+b-ab$ for  $Y=[0,1].$ 
    \end{enumerate}

\medskip         

As we have shown above, the upper $n$-Sugeno integral possesses several properties of the Sugeno integral,  but not all. Hereafter, $a\mI{X}\vee f = (a\mI{X})\vee f$ and $a\mI{X}\wedge f = (a\mI{X})\wedge f.$ We say that the integral $\rJ$ is \textit{maxitive homogeneous} and \textit{minitive homogeneous} if $\rJ(\mu,a\mI{X}\vee f)=a\vee \rJ(\mu,f)$ and $\rJ(\mu,a\mI{X}\wedge f)=a\wedge \rJ(\mu,f)$ for any $a,$ $\mu,f,$ respectively. Next example demonstrates that it is not the case of upper and lower $n$-Sugeno integral.

\begin{example}\label{ex5}
Let $f=0{.}25\mI{A}+0{.}75\mI{B},$ where $A\cap B=\emptyset$ and $A\cup B=X.$ Assume that $\mu(X)=1$ and $\mu(B)=0{.}5.$ 
It is clear that 
$(1/3)\mI{X}\wedge f=0{.}25\mI{A}+(1/3)\mI{B}$ and $(1/3)\mI{X}\vee f=(1/3)\mI{A}+0{.}75\mI{B}.$ Thus,
$$\cg[+]{f}=0{.}75,\quad  \cg[+]{(1/3)\mI{X}\wedge f}=7/12,\quad \cg[+]{(1/3)\mI{X}\vee f}=5/6,$$
so
$
\cg[+]{(1/3)\mI{X}\wedge f}>(1/3)\wedge\cg[+]{f}$ and
$
\cg[+]{(1/3)\mI{X}\vee f}>(1/3)\vee\cg[+]{f}.$
\end{example}

Now we give one sufficient condition for  minitive/maxitive homogeneity of the integral. 

\begin{pro}\label{pro3.12}
Let $n\ge 2$  and  
$(\mu,f)\in \cMm\times \cFf.$ Then
\begin{enumerate}[noitemsep]
\item[(a)]    
$\cgn{a\mI{X}\wedge f}{n}=a\wedge \cgn{f}{n}$ for any $a\in Y$ if $(a\wedge b)\c (a\wedge c)=a\wedge (b\c c)$ for all  $a,b,c\in Y.$
\item[(b)]    
$\cgn{a\mI{X}\vee f}{n}=a\vee \cgn{f}{n}$  for any $a\in Y$
if $(a\vee b)\c (a\vee c)=a\vee (b\c c)$ for all  $a,b,c\in Y.$
\end{enumerate}
\end{pro}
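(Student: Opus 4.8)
The plan is to establish both identities by induction on $n$, where the base case $n=1$ is the corresponding homogeneity of the Sugeno integral and the inductive step pushes the property through the recurrence of Definition~\ref{def_upper_sug} by means of the algebraic hypothesis on $\c$. The whole computation rests on the elementary level-set identities $\{a\mI{X}\wedge f\ge t\}=\{f\ge t\}$ and $\{a\mI{X}\vee f\ge t\}=X$ for $t\le a$, while $\{a\mI{X}\wedge f\ge t\}=\emptyset$ and $\{a\mI{X}\vee f\ge t\}=\{f\ge t\}$ for $t>a$; thus everything is governed by $\mu(\{f\ge t\})$ and its monotonicity in $t$.

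For part (a), the base case $\rI{a\mI{X}\wedge f}=a\wedge\rI{f}$ is the standard minitive homogeneity of the Sugeno integral and follows from \eqref{sugeno} and Lemma~\ref{lemat}(b). For the step I assume $\cgn{a\mI{X}\wedge f}{n}=a\wedge\cgn{f}{n}$ and, using the level sets above together with $t=a\wedge t$ for $t\le a$, write
\begin{align*}
\cgn{a\mI{X}\wedge f}{n+1}=\sup_{t\le a}\big\{\big((a\wedge t)\c(a\wedge\cgn{f}{n})\big)\wedge\mu(\{f\ge t\})\big\}.
\end{align*}
The hypothesis $(a\wedge b)\c(a\wedge c)=a\wedge(b\c c)$ collapses the inner bracket to $a\wedge(t\c\cgn{f}{n})$, so the expression equals $a\wedge\sup_{t\le a}\phi(t)$ with $\phi(t)=(t\c\cgn{f}{n})\wedge\mu(\{f\ge t\})$ and $\cgn{f}{n+1}=\sup_{t\in Y}\phi(t)$. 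The crux is then to drop the restriction $t\le a$: for $t\ge a$ the same hypothesis gives $a\wedge(t\c\cgn{f}{n})=a\c(a\wedge\cgn{f}{n})$, a constant in $t$, so $a\wedge\phi(t)$ is nonincreasing on $[a,\bar{y}]$ and dominated by its value at $t=a$, which already lies in the range $t\le a$. Hence $a\wedge\sup_{t\le a}\phi=a\wedge\sup_{t\in Y}\phi=a\wedge\cgn{f}{n+1}$, as required.

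For part (b) I would run the dual induction, with base case the maxitive homogeneity $\rI{a\mI{X}\vee f}=a\vee\rI{f}$. Writing $D_n:=a\vee\cgn{f}{n}$ and splitting the recurrence at $t=a$ gives
\begin{align*}
\cgn{a\mI{X}\vee f}{n+1}=\Big[\sup_{t\le a}\big\{(t\c D_n)\wedge\mu(X)\big\}\Big]\vee\Big[\sup_{t>a}\big\{(t\c D_n)\wedge\mu(\{f\ge t\})\big\}\Big].
\end{align*}
On the first block the nondecreasing factor $t\c D_n$ is attained at $t=a$, yielding $a\c D_n$; on the second, the hypothesis $(a\vee b)\c(a\vee c)=a\vee(b\c c)$ with $a\vee t=t$ rewrites $t\c D_n=a\vee(t\c\cgn{f}{n})$, and distributing $\wedge\mu(\{f\ge t\})$ separates off $\phi(t)$. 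Reading $a\c D_n=a\vee(0\c\cgn{f}{n})$ from the hypothesis with $b=0$, and noting $0\c\cgn{f}{n}=\phi(0)\le\cgn{f}{n+1}$ together with $\phi(t)\le a\c D_n$ for $t\le a$, one obtains $a\c D_n\le a\vee\cgn{f}{n+1}$ and $a\vee\sup_{t\le a}\phi\le a\c D_n$, whence $\cgn{a\mI{X}\vee f}{n+1}=a\vee\cgn{f}{n+1}$.

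The main obstacle is the $t\le a$ block in part (b), where $\mu(\{a\mI{X}\vee f\ge t\})=\mu(X)$ instead of the harmless $0$ appearing in part (a). Absorbing the factor $\wedge\mu(X)$ to recover $a\c D_n$ (and, at $t=0$, to read $\phi(0)=0\c\cgn{f}{n}$) needs $t\c D_n\le\mu(X)$ for the relevant values, i.e.\ the normalization $\mu(X)\ge\bar{y}$. This is genuinely necessary: with $f\equiv 0$ part (b) reduces to $\cgn{a\mI{X}}{n}=a$, which by Proposition~\ref{pro3.3}(d) forces $\mu(X)\ge\bar{y}$. I would therefore carry the standing assumption $\mu(X)\ge\bar{y}$ throughout part (b) (it is already used in its base case), whereas part (a) requires no such restriction precisely because there the critical block carries $\mu=0$.
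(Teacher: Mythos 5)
Your proof of part (a) is correct and complete; it argues directly from Definition~\ref{def_upper_sug} by splitting the supremum at $t=a$ and using the distributivity hypothesis twice (once to pull $a\wedge$ out of the terms with $t\le a$, once to show the terms with $t>a$ are dominated after truncation by $a$). The paper instead settles the case $n=2$ in one line from the set-based representation of Theorem~\ref{tw3.6}: since $\inf_{x\in A}(a\mI{X}\wedge f)(x)=a\wedge\inf_{x\in A}f(x)$ and $\rI{a\mI{X}\wedge f}=a\wedge\rI{f}$, the hypothesis turns the integrand into $a\wedge\big[(\inf_{x\in A}f(x)\c\rI{f})\wedge\mu(A)\big]$ and $a\wedge$ factors out of the supremum. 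Both routes are sound; the paper's is shorter, yours works straight from the defining recurrence.

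For part (b) you have identified a real issue. The paper only says the proof is ``analogous'', but the analogy breaks: running the same computation through Theorem~\ref{tw3.6} gives $\cgn{a\mI{X}\vee f}{2}=\sup_{A}\big\{(a\wedge\mu(A))\vee\big[(\inf_{x\in A}f(x)\c\rI{f})\wedge\mu(A)\big]\big\}=(a\wedge\mu(X))\vee\cgn{f}{2}$, which equals $a\vee\cgn{f}{2}$ only when $\mu(X)\ge a$. Your diagnosis that $\mu(X)\ge\bar y$ is genuinely needed is confirmed by a concrete counterexample: take $Y=[0,1]$, $\c=\vee$ (which satisfies the hypothesis of (b)), $\mu(X)=1/2$, $a=1$ and $f\equiv 0$; then $\cgn{a\mI{X}\vee f}{2}=\cgn{\mI{X}}{2}=1/2$ while $a\vee\cgn{f}{2}=1\vee 0=1$. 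So part (b) as stated is false, and your argument --- which is correct once $\mu(X)\ge\bar y$ is imposed (this is already needed for the base case, since in general $\rI{a\mI{X}\vee f}=(a\wedge\mu(X))\vee\rI{f}$) --- supplies both the missing hypothesis and the complete inductive step that the paper omits.
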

\begin{proof} 
We show only $(a)$ since the proof of $(b)$ is analogous.  
Since $\rI{a\mI{X}\wedge f}=a\wedge \rI{f},$ we have from Theorem~\ref{tw3.6} that
$
\cgn{a\mI{X}\wedge f}{2}=\sup_{A\in\cA}\big\{\big[(a\wedge \inf_{x\in A} f(x))\c(a\wedge \rI{f})\big] \wedge \mu(A)\big\}=a\wedge \cgn{f}{2}.
$
The second induction step proceeds similarly.
\end{proof}

The admissible fusion map $x\c y=\varphi(x)\wedge \gamma(y)$ satisfies the condition of Proposition \ref{pro3.12}\,(a) if $\varphi,\gamma\colon Y\to Y$ are nondecreasing functions such that $\varphi(a)\wedge \gamma(a)=a$  for any $a\in Y.$
The assumption of Proposition \ref{pro3.12}\,(b) holds if $x\c y=\varphi(x)\vee \gamma(y),$  where the functions $\varphi,\gamma\colon Y\to Y$ are  nondecreasing with $\varphi(a)\vee \gamma(a)=a$  for all $a\in Y.$

\section{Lower $n$-Sugeno integral}\label{sec:lower}

This section is devoted to defining a~new functional generalizing the 
 upper $2$-$h$-index \eqref{hu} and its properties. We say that  $\st\colon [0,\infty]\times Y\to [0,\infty]$  is a~{\it link  map} if it is nondecreasing and
$0\st a\le a$ for all $a\in Y.$ Clearly, the link map coincides with the admissible fusion map if and only if $Y=[0,\infty].$
 
\begin{definition}\label{def_lower_sug}
Let $(\mu,f)\in\cMm\times\cFf$ and $\st$ be a~link map. For $n\ge 1$ the \textit{lower $n$-Sugeno integral} is defined by
\begin{align*}
\cgdnm{f}{n+1}:=\sup_{t\in Y}\big\{t\wedge \big(\mu(\{f\ge t\})\st\cgdnm{f}{n}\big)\big\},
\end{align*}
where $\cgdnm{f}{1}=\rIm{f}.$
\end{definition}

It is clear that $\cgdnm{f}{n}\in Y$ for all $n.$ The next proposition shows that the lower $n$-Sugeno integral satisfies all the properties in Definition~\ref{defcalka}.

\begin{pro}\label{pro4.2}
Let $n\ge 1,$ $f,g\in\cFf$ and $\mu,\nu\in\cMm.$ 
\begin{enumerate}[noitemsep]
\item[(a)] If $f\le_\mu g,$ then $\cgdnm{f}{n}\le\cgdnm{g}{n}.$ 
\item[(b)] If $\mu(A)\le\nu(A)$ for all $A\in \cA,$ then $\cgdnm[\mu]{f}{n}\le\cgdnm[\nu]{f}{n}.$ 
\item[(c)] $\cgdnm{a\mI{A}}{n}=s_n(a,\mu(A))$ 
for $a\in Y$ and $A\in\cA,$
where   
$s_n\colon Y\times [0,\infty]\to Y$ is a~nondecreasing function 
such that  $s_n(a,0)=s_n(0,b)=0$ and $s_{n+1}(a,b)=a\wedge [b\st s_{n}(a,b)]$ for all $a,b.$ 
\end{enumerate}
\end{pro}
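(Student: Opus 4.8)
The plan is to prove Proposition~\ref{pro4.2} following the same strategy as the corresponding result for the upper $n$-Sugeno integral (Proposition~\ref{pro3.2}), since the two definitions are structurally analogous. Parts $(a)$ and $(b)$ should follow immediately by induction on $n$ together with the monotonicity of $\st$. For $(a)$, the base case $n=1$ is just property $(C_1)$ of the ordinary Sugeno integral, which reads $\rIm{f}\le\rIm{g}$ whenever $f\le_\mu g$; this is exactly Lemma~\ref{lemat} combined with the definition of $\le_\mu$ via $\mu(\{f\ge t\})\le\mu(\{g\ge t\})$. For the inductive step, I would argue that $f\le_\mu g$ gives $\mu(\{f\ge t\})\le\mu(\{g\ge t\})$ for all $t$, and the induction hypothesis gives $\cgdnm{f}{n}\le\cgdnm{g}{n}$; since $\st$ is nondecreasing in both arguments, $\mu(\{f\ge t\})\st\cgdnm{f}{n}\le\mu(\{g\ge t\})\st\cgdnm{g}{n}$, and taking $t\wedge(\cdot)$ followed by $\sup_{t\in Y}$ preserves the inequality. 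Part $(b)$ is identical, replacing the comparison of superlevel sets of $f$ and $g$ by the pointwise comparison $\mu(\{f\ge t\})\le\nu(\{f\ge t\})$.

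The main content is part $(c)$, which I would prove by induction on $n$. The base case is the computation $\rIm{a\mI{A}}=a\wedge\mu(A)$, so setting $b=\mu(A)$ we have $s_1(a,b)=a\wedge b$, which is nondecreasing and vanishes when $a=0$ or $b=0$. For the inductive step, I expect the key calculation to mirror the one in Proposition~\ref{pro3.2}\,(c): splitting the supremum defining $\cgdnm{a\mI{A}}{n+1}$ into the contribution at $t=0$ and the contribution over $t\in(0,\bar y]$. Writing out
\begin{align*}
\cgdnm{a\mI{A}}{n+1}=\big[0\wedge(\mu(X)\st s_n(a,b))\big]\vee\sup_{t\in(0,\bar y]}\big\{t\wedge\big(\mu(\{a\mI{A}\ge t\})\st s_n(a,b)\big)\big\},
\end{align*}
the first bracket is $0$, and on $(0,\bar y]$ we have $\{a\mI{A}\ge t\}=A$ when $t\le a$ and $=\emptyset$ when $t>a$, so $\mu(\{a\mI{A}\ge t\})$ equals $b$ for $0<t\le a$ and $0$ for $t>a$. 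Using $s_n(a,0)=0$ and $0\st s_n(a,b)\le s_n(a,b)\le\bar y$ (the link-map inequality $0\st c\le c$ applied to $c=s_n(a,b)\in Y$), the supremum collapses to $\sup_{0<t\le a}\{t\wedge(b\st s_n(a,b))\}=a\wedge(b\st s_n(a,b))$. This yields $s_{n+1}(a,b)=a\wedge[b\st s_n(a,b)]$ exactly as required, and I would then verify the monotonicity of $s_{n+1}$ and the boundary conditions $s_{n+1}(a,0)=s_{n+1}(0,b)=0$ from the induction hypothesis: $s_{n+1}(0,b)=0\wedge[\cdots]=0$ trivially, and $s_{n+1}(a,0)=a\wedge[0\st s_n(a,0)]=a\wedge[0\st 0]=0$, while monotonicity in each variable follows from nondecreasingness of $\st$, of $s_n$, and of $(u,v)\mapsto u\wedge v$.

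The step I expect to require the most care is the collapse of the supremum over $(0,\bar y]$ in part $(c)$. Unlike the upper integral, where the outer aggregation is $t\c\cgn{f}{n}$ multiplied against $\mu$, here the level-set measure is fed into the first slot of $\st$, so I must be careful that the term surviving at small $t$ genuinely dominates. Specifically, for $0<t\le a$ the bracketed quantity is $t\wedge(b\st s_n(a,b))$, which is nondecreasing in $t$ and so is maximized as $t\nearrow a$, giving $a\wedge(b\st s_n(a,b))$; for $t>a$ it reduces to $t\wedge(0\st s_n(a,b))$, and because $0\st s_n(a,b)\le s_n(a,b)\le a$ need not hold in general, I would instead note simply that this contribution is at most $a\wedge(b\st s_n(a,b))$ whenever $b\ge 0\st s_n(a,b)$, using monotonicity of $\st$ in its first argument. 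The cleanest route is to observe $\mu(\{a\mI A\ge t\})\le b$ for every $t>0$, hence $\mu(\{a\mI A\ge t\})\st s_n(a,b)\le b\st s_n(a,b)$, so the whole supremum over $t\in(0,\bar y]$ is bounded above by $a\wedge(b\st s_n(a,b))$ and attained at $t=a$, closing the argument.
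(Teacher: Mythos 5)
Your overall strategy is the same as the paper's (parts (a) and (b) by induction with monotonicity of $\st$; part (c) by induction, splitting the supremum at $t=a$), but the key step of part (c) --- showing that the contribution from $t>a$ is dominated --- is handled incorrectly. For $t\in(a,\bar y]$ the integrand is $t\wedge(0\st s_n(a,b))$, whose supremum is $\bar y\wedge(0\st s_n(a,b))=0\st s_n(a,b)$, so to collapse the whole supremum to $a\wedge(b\st s_n(a,b))$ you must show $0\st s_n(a,b)\le a$ \emph{and} $0\st s_n(a,b)\le b\st s_n(a,b)$. The second is just monotonicity of $\st$ in its first slot (since $0\le b$); the first requires $s_n(a,b)\le a$, which you explicitly doubt (``$0\st s_n(a,b)\le s_n(a,b)\le a$ need not hold in general''). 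In fact it does hold: $s_1(a,b)=a\wedge b\le a$ and $s_{k+1}(a,b)=a\wedge[\,b\st s_k(a,b)\,]\le a$, so $s_n(a,b)\le a$ for every $n$ by a one-line induction --- this is exactly the paper's separately proved inequality $\cgdnm{a\mI{A}}{n}\le a$ (its \eqref{MO5}), after which $0\st s_n(a,b)\le s_n(a,b)\le a$ closes the argument.

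Both of your proposed substitutes fail. The condition ``whenever $b\ge 0\st s_n(a,b)$'' is neither guaranteed nor relevant: monotonicity in the first argument already gives $0\st s_n(a,b)\le b\st s_n(a,b)$ unconditionally, but that alone does not bound $0\st s_n(a,b)$ by $a$. And your ``cleanest route'' --- bounding $\mu(\{a\mI{A}\ge t\})\le b$ uniformly in $t$ --- only yields
\begin{align*}
\sup_{t\in(0,\bar y]}\big\{t\wedge\big(\mu(\{a\mI{A}\ge t\})\st s_n(a,b)\big)\big\}\;\le\;\sup_{t\in(0,\bar y]}\big\{t\wedge\big(b\st s_n(a,b)\big)\big\}=\bar y\wedge\big(b\st s_n(a,b)\big),
\end{align*}
which is in general strictly larger than $a\wedge(b\st s_n(a,b))$; by discarding the information that the level sets are empty for $t>a$, you lose the cutoff at $a$ entirely. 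The fix is easy (insert the induction $s_n(a,b)\le a$), but as written the decisive inequality is not established.
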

\begin{proof} Parts $(a)$ and $(b)$ are immediate by induction.\\
 $(c)$  We have the following recurrence formula 
\begin{align*}
\cgdnm{a\mI{A}}{n+1}&=\big[a\wedge (\mu(A)\st \cgdnm{a\mI{A}}{n})\big]\vee \big[\bar{y}\wedge(0\st \cgdnm{a\mI{A}}{n})\big],
\end{align*}   
where $a\in Y.$ First we show that 
\begin{align}\label{MO5}
\cgdnm{a\mI{A}}{n}\le a
\end{align}
for all  $n$  and $a\in Y.$ We use induction on $n.$
In fact, $\rI{a\mI{A}}=a\wedge \mu(A)\le a.$ Assume that \eqref{MO5} holds for some $n.$
Since $ 0\st a\le a,$ we have 
\begin{align*}
\cgdnm{a\mI{A}}{n+1}&\le [ a\wedge (\mu(A)\st a)]\vee [\bar{y}\wedge (0\st a)]\le a\vee a=a
\end{align*}   
and the proof of \eqref{MO5} is complete.  From \eqref{MO5}, we obtain  $0\st \cgdnm{a\mI{A}}{n+1}\le 0\st a\le a\le \bar{y}$ for all $a\in Y.$
Hence,
\begin{align*}
\cgdnm{a\mI{A}}{n+1}&=\big[ a\wedge  (\mu(A)\st \cgdnm{a\mI{A}}{n})\big]\vee \big[0\st \cgdnm{a\mI{A}}{n}\big].
\end{align*}    
As $0\st \cgdnm{a\mI{A}}{n}\le 0\st a
\le a$ and $0\st\cgdnm{a\mI{A}}{n}\le \mu(A)\st \cgdnm{a\mI{A}}{n},$ we get 
\begin{align}\label{MO22}
\cgdnm{a\mI{A}}{n+1}&=a\wedge\big[\mu(A)\st \cgdnm{a\mI{A}}{n}\big].
\end{align}    
Applying induction on $n,$ we obtain the statement (c).
\end{proof}

To shorten the notation, we write  $\cgdn{f}{n}$ instead of $\cgdnm{f}{n}$ if there is no ambiguity. Hereafter, for a~link map $\st$ and $\mu\in\cMm$ we use the convention $\mu^\st _{k+1}(A):=\mu(A)\st \mu^\st_k(A)$ for all $k\ge 1$ with $\mu^\st _1(A)=\mu(A)$ provided that $\mu^\st_{k}(X)\in Y$ for all $k.$ It is evident that $\mu^\st_n$  is a~monotone measure if  $\mu^\st_n(X)>0.$  Some properties of the lower $n$-Sugeno integral are analogous to those of the upper $n$-Sugeno integral, which is shown in what follows.

\begin{pro} \label{pro4.3}
Let $(\mu,f)\in \cMm\times \cFf$ and $A\in \cA.$ 
\begin{enumerate}[noitemsep]
\item[(a)]  If $\rI{f}=0,$ then $\cgdn{f}{n}=0$ for all $n\ge 2.$ If $\cgdn{f}{k}=0$ for some $k>1$ and $a\st b>0$ for all $a,b>0,$  then $\cgdn{f}{n}=0$ for all $n\ge 1.$
\item[(b)] $\cgdn{f}{n}=\lim _{t\nearrow \cgdn{f}{n}}\big(t\wedge (\mu (\{f\ge t\})\st\cgdn{f}{n-1})\big)$ if  $\cgdn{f}{n-1}>0.$
\item[(c)] $\cgdn{f}{n}=\lim _{t\searrow \cgdn{f}{n}}\big(t\vee (\mu (\{f>t\})\st\cgdn{f}{n-1})\big)$ if $\cgdn{f}{n}<\bar{y}.$
\item[(d)]  There is $a>1$ such that $\cgdn[\nu]{ag}{n}\le a\cgdn[\nu]{g}{n}$ for all $(\nu,ag)\in \cMm\times \cFf$ and $n\ge 1,$ if $x\st (ay)\le a(x\st y)$ for all $x\in \nu(\cA)$ and $ay\in Y.$
Moreover, $\cgdn[\nu]{ag}{n}\ge a\cgdn[\nu]{g}{n}$ for some $a\in (0,1)$ and for all $(\nu,g)\in \cMm\times \cFf$ and $n\ge 1,$ if 
$x\st (ay)\ge a(x\st y)$  for all $x\in \nu(\cA)$ and $y\in Y.$
\item[(e)] $\cgdn{a\mI{A}}{n}=\mu^\st_n(A)$ for 
$a\in [\max_{1\le k\le n} \mu^{\st}_k(A),\bar{y}].$
\item[(f)] (Idempotency) 
$\cgdn{a\mI{X}}{n}=a$ for all $a\in Y$ and $n\ge 1$ if and only if $\mu(X)\ge \bar{y}$ and $\mu(X)\st a\ge a$ for all $a\in Y.$
\end{enumerate}
\end{pro}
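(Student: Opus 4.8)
The plan is to hang the whole proof on one structural observation: for each fixed $n$ the lower $n$-Sugeno integral has the shape $\cgdn{f}{n}=\sup_{t\in Y}\{t\wedge h_n(t)\}$, where $h_n(t):=\mu(\{f\ge t\})\st\cgdn{f}{n-1}$ is \emph{nonincreasing} in $t$ (because $t\mapsto\mu(\{f\ge t\})$ is nonincreasing and a link map is nondecreasing in its first argument). This is exactly the form of the Sugeno integral \eqref{sugeno} with the decreasing distribution replaced by $h_n$, so I would first re-prove the level-crossing property of Lemma~\ref{lemat}(b) for $h_n$: writing $I:=\cgdn{f}{n}$, if $t<I$ there is $s>t$ with $s\wedge h_n(s)>t$, whence $h_n(t)\ge h_n(s)>t$, while if $t>I$ then $t\wedge h_n(t)\le I<t$ forces $h_n(t)<t$. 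Parts (b) and (c) are then immediate: for $t<I$ one has $t\wedge h_n(t)=t$, which tends to $I$ as $t\nearrow I$; and for $t>I$ the bound $\mu(\{f>t\})\st\cgdn{f}{n-1}\le h_n(t)<t$ gives $t\vee(\mu(\{f>t\})\st\cgdn{f}{n-1})=t$, which tends to $I$ as $t\searrow I$.

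Part (a) copies the argument of Proposition~\ref{pro3.3}(a). If $\rI{f}=0$, then Lemma~\ref{lemat}(e) gives $\mu(\{f\ge t\})=0$ for all $t>0$, so $h_n(t)=0\st\cgdn{f}{n-1}$; using $0\st 0=0$ (the case $a=0$ of $0\st a\le a$) and the inductive hypothesis $\cgdn{f}{n-1}=0$, every term of the supremum vanishes and $\cgdn{f}{n}=0$. For the second assertion, $\cgdn{f}{k}=0$ forces $\mu(\{f\ge t\})\st\cgdn{f}{k-1}=0$ for all $t>0$; the hypothesis $a\st b>0$ for positive $a,b$ then gives either $\cgdn{f}{k-1}=0$ or $\mu(\{f\ge t\})=0$ for all $t>0$, and in the latter case Lemma~\ref{lemat}(e) yields $\rI{f}=0$ and hence again $\cgdn{f}{k-1}=0$. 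A downward induction reaches $\rI{f}=0$, and the first assertion closes the argument.

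The remaining parts are inductions on $n$. For (d) I would substitute $s=t/a$ in the defining recurrence and, at the inductive step, combine monotonicity of $\st$ in its second argument, the inductive hypothesis $\cgdn[\nu]{ag}{n}\le a\,\cgdn[\nu]{g}{n}$, and the scaling hypothesis $x\st(ay)\le a(x\st y)$ to extract the factor $a$; the base case $n=1$ is the Sugeno scaling already used in Proposition~\ref{pro3.3}(b), and the case $0<a<1$ merely reverses the inequalities. For (e) I would run the recurrence $s_{n+1}(a,b)=a\wedge(b\st s_n(a,b))$ of Proposition~\ref{pro4.2}(c): assuming $s_n(a,\mu(A))=\mu^\st_n(A)$ whenever $a\ge\max_{1\le k\le n}\mu^\st_k(A)$, the threshold condition for $n+1$ gives $s_{n+1}(a,\mu(A))=a\wedge(\mu(A)\st\mu^\st_n(A))=a\wedge\mu^\st_{n+1}(A)=\mu^\st_{n+1}(A)$, the last step using $a\ge\mu^\st_{n+1}(A)$. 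For (f) I would again use $\cgdn{a\mI{X}}{n}=s_n(a,\mu(X))$: for sufficiency, $\mu(X)\ge\bar{y}\ge a$ gives $s_1=a$ and then $s_{n+1}=a\wedge(\mu(X)\st a)=a$ by $\mu(X)\st a\ge a$; for necessity, the case $n=1$ forces $a\wedge\mu(X)=a$ for every $a\in Y$, i.e.\ $\mu(X)\ge\bar{y}$, after which the case $n=2$ forces $a\wedge(\mu(X)\st a)=a$, i.e.\ $\mu(X)\st a\ge a$.

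The one genuine subtlety, and the reason parts (b)--(c) cannot simply be quoted from Lemma~\ref{lemat}, is that the set function $A\mapsto\mu(A)\st c$ need \emph{not} be a monotone measure: a link map is only required to satisfy $0\st c\le c$, not $0\st c=0$. Consequently the level-crossing and one-sided-continuity facts must be established directly from the nonincreasing character of $h_n$, as in the first paragraph; this is the conceptual heart of the proof. Everything else is careful bookkeeping, the most delicate piece being the propagation of the threshold $\max_{1\le k\le n}\mu^\st_k(A)$ through the induction in part (e).
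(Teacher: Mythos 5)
Your proposal is correct and follows essentially the same route as the paper: the paper likewise proves (b)--(c) by establishing the level-crossing property $t>\mu(\{f\ge t\})\st\cgdn{f}{n-1}$ for $t>\cgdn{f}{n}$ and the reverse for $t<\cgdn{f}{n}$ directly from monotonicity (mirroring Lemma~\ref{lemat}(c)--(d)), handles (a) and (d) by the arguments of Proposition~\ref{pro3.3}(a)--(b), and obtains (e)--(f) by induction via the recurrence \eqref{MO22}. Your write-up simply makes explicit the details the paper leaves to cross-references.
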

\begin{proof} 
$(a)$ The first assertion follows from  Lemma \ref{lemat}\,(e) and the induction, since $0\st 0=0.$ If 
$\cgdn{f}{k}=0$ for some $k>1,$ then  
repeating similar argument as in the proof of Proposition 
\ref{pro3.3}\,(a), we get the assertion.

\noindent $(b)$ and $(c)$ By monotonicity of $\st,$ we have $t>\mu(\{f\ge t\})\st\cgdn{f}{n-1}$ for $t>\cgdn{f}{n}$ and $t<\mu(\{f\ge t\})\st\cgdn{f}{n-1}$ for $t<\cgdn{f}{n}.$ In consequence, both properties (b) and (c) hold. See the proof of Lemma \ref{lemat}\,(c)-(d). 

\noindent $(d)$ The proof is similar to that of Proposition~\ref{pro3.3}\,(b). 

\noindent $(e)$ and $(f)$  The proofs go by induction on $n$; see \eqref{MO22}.
\end{proof}

\begin{pro} \label{pro4.4}
The sequence $(\cgdn{f}{n})_{n=1}^\infty$ is nondecreasing for all $(\mu,f)\in \cMm\times \cFf$ if and only if $a\st b\ge a\wedge b$ for all $a\in [0,\infty]$ and $b\in Y.$
\end{pro}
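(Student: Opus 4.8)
The plan is to mirror the proof of Proposition~\ref{pro3.4}, handling the two implications separately and reducing the whole question (as there) to the behaviour of the integral on indicator-type functions $a\mI{A}$ together with the monotonicity of $\st$. The only genuinely new feature compared with the upper case is that the link map has mismatched domains, $\st\colon[0,\infty]\times Y\to[0,\infty]$, so I must keep track of which argument ranges over measure values ($[0,\infty]$) and which over function values ($Y$); this is exactly where I expect the bookkeeping to require care.

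For necessity (``$\Rightarrow$'') I would test the hypothesis on the functions $a\mI{A}$. By Proposition~\ref{pro4.2}(c) one has $\cgdn{a\mI{A}}{1}=a\wedge\mu(A)$ and $\cgdn{a\mI{A}}{2}=a\wedge\big[\mu(A)\st(a\wedge\mu(A))\big]$. Imposing $\cgdn{a\mI{A}}{2}\ge\cgdn{a\mI{A}}{1}$ and writing $b=\mu(A)$ yields
\begin{align*}
a\wedge\big[b\st(a\wedge b)\big]\ge a\wedge b,
\end{align*}
valid for every $a\in Y$ and every $b\in\mu(\cA)$. Since any prescribed value in $[0,\infty]$ is realized as $\mu(A)$ by a suitable monotone measure (e.g. $\mu(B)=c$ if $A\subseteq B$ and $0$ otherwise, for a fixed nonempty $A$), the displayed inequality in fact holds for all $a\in Y$ and all $b\in[0,\infty]$. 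To extract the target inequality $p\st q\ge p\wedge q$ for arbitrary $p\in[0,\infty]$ and $q\in Y$, I would specialize $a=q$, $b=p$: the inequality becomes $q\wedge\big[p\st(p\wedge q)\big]\ge p\wedge q$, and since $p\wedge q\le q$ this forces $p\st(p\wedge q)\ge p\wedge q$. Monotonicity of $\st$ in the second coordinate (with $p\wedge q\le q$) then gives the clean chain $p\st q\ge p\st(p\wedge q)\ge p\wedge q$, which covers all cases uniformly with no need to split on whether $p\ge q$.

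For sufficiency (``$\Leftarrow$'') I would argue by induction, and I expect the hypothesis $a\st b\ge a\wedge b$ to be needed only in the base step. For $n=1\to 2$, the hypothesis applied with first argument $\mu(\{f\ge t\})\in[0,\infty]$ and second argument $\rI{f}\in Y$ gives $\mu(\{f\ge t\})\st\rI{f}\ge\mu(\{f\ge t\})\wedge\rI{f}$; pulling the constant $\rI{f}$ out of the supremum over $t$ yields $\cgdn{f}{2}\ge\rI{f}\wedge\rI{f}=\cgdn{f}{1}$. For the inductive step, assuming $\cgdn{f}{n}\ge\cgdn{f}{n-1}$, monotonicity of $\st$ in its second argument alone gives $\mu(\{f\ge t\})\st\cgdn{f}{n}\ge\mu(\{f\ge t\})\st\cgdn{f}{n-1}$ for every $t$, hence $\cgdn{f}{n+1}\ge\cgdn{f}{n}$ after taking suprema. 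The main obstacle is therefore not the induction but the necessity direction: getting from the single pointwise identity on $a\mI{A}$ to the full inequality on the whole mismatched domain $[0,\infty]\times Y$, where one must justify that $\mu(A)$ exhausts $[0,\infty]$ and then run the monotonicity chain above rather than a direct case analysis.
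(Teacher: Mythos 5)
Your proposal is correct and follows essentially the same route as the paper: necessity is obtained by testing on $a\mI{A}$ via the recurrence $\cgdn{a\mI{A}}{2}=a\wedge[\mu(A)\st(a\wedge\mu(A))]$ and then running the monotonicity chain $b\st a\ge b\st(a\wedge b)\ge a\wedge b$, while sufficiency is the same induction (hypothesis used only in the base step, monotonicity of $\st$ in the second argument thereafter) that the paper delegates to the proof of Proposition~\ref{pro3.4}. Your extra remarks on realizing arbitrary $b\in[0,\infty]$ as $\mu(A)$ and on the mismatched domains are careful bookkeeping the paper leaves implicit, but they do not change the argument.
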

\begin{proof}
``$\Rightarrow$'' 
By \eqref{MO22} for $n=1,$ we have  $\cgd{a\mI{A}}=
a\wedge [b\st (a\wedge b)],$
where $A\in \cA,$ $a\in Y$ and  $b=\mu(A).$ Since $\cgdn{a\mI{A}}{1}\le \cgdn{a\mI{A}}{2},$ we obtain 
$a\wedge b\le a\wedge [b\st (a\wedge b)]\le b\st a$
for all $a\in Y$ and $b\in [0,\infty],$ as desired.

\noindent ``$\Leftarrow$''  The proof is similar to that of Proposition \ref{pro3.4}, so we omit it.
\end{proof}

\begin{tw}\label{tw4.5}
For all $(\mu,f)\in \cMm\times \cFf$ and $n\ge 2$
\begin{align*}
\cgdn{f}{n}=\sup_{A\in\cA}\big\{\inf_{x\in A} f(x)\wedge \big(\mu(A)\st\cgdn{f}{n-1}\big)\big\}.
\end{align*}
\end{tw}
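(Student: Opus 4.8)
The plan is to follow the proof of Theorem~\ref{tw3.6} almost verbatim, with the roles of the fusion operation and the minimum interchanged. By Definition~\ref{def_lower_sug}, setting $a:=\cgdn{f}{n-1}$ (which lies in $Y$), the left-hand side is exactly $\sup_{t\in Y}\{t\wedge(\mu(\{f\ge t\})\st a)\}$. Hence it suffices to prove the general identity
\begin{align*}
\sup_{t\in Y}\{t\wedge(\mu(\{f\ge t\})\st a)\}=\sup_{A\in\cA}\{\inf_{x\in A}f(x)\wedge(\mu(A)\st a)\}
\end{align*}
for every fixed $a\in Y$, relying only on the fact that $\st$ is nondecreasing in its first coordinate.

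First I would establish ``$\le$''. Fixing $t\in Y$ and putting $A_t=\{f\ge t\}$, one has $\inf_{x\in A_t}f(x)\ge t$, while $\mu(A_t)\st a=\mu(\{f\ge t\})\st a$. Consequently $t\wedge(\mu(\{f\ge t\})\st a)\le\inf_{x\in A_t}f(x)\wedge(\mu(A_t)\st a)$, and the right-hand side is bounded by $\sup_{A\in\cA}\{\inf_{x\in A}f(x)\wedge(\mu(A)\st a)\}$; taking the supremum over $t$ yields the inequality.

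Next I would prove ``$\ge$''. Fixing $A\in\cA$ and setting $t_0=\inf_{x\in A}f(x)\in Y$, I have $A\subset\{f\ge t_0\}$, whence $\mu(A)\le\mu(\{f\ge t_0\})$ and, by monotonicity of $\st$ in the first argument, $\mu(A)\st a\le\mu(\{f\ge t_0\})\st a$. Thus $\inf_{x\in A}f(x)\wedge(\mu(A)\st a)\le t_0\wedge(\mu(\{f\ge t_0\})\st a)$, which is dominated by $\sup_{t\in Y}\{t\wedge(\mu(\{f\ge t\})\st a)\}$; taking the supremum over $A$ closes the argument, and substituting $a=\cgdn{f}{n-1}$ gives the theorem.

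There is no real obstacle here: the identity is precisely the analogue of \eqref{m1}, and both inequalities hinge only on the monotonicity of $\st$ in its first slot together with the membership $\inf_{x\in A}f(x)\in Y$, which holds because $f\in\cFf$ takes values in the closed interval $Y$. The only point deserving a moment's attention is the empty-set case $A=\emptyset$ (equivalently $\{f\ge t\}=\emptyset$), handled uniformly by the usual convention $\inf_{x\in\emptyset}f(x)=\bar{y}$ together with $\mu(\emptyset)=0$; this is treated exactly as in Theorem~\ref{tw3.6}, and neither second-coordinate monotonicity nor continuity of $\st$ is needed.
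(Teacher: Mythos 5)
Your proposal is correct and matches the paper's own proof, which simply instructs the reader to repeat the argument of Theorem~\ref{tw3.6} with $\st$ nondecreasing in its first coordinate — exactly the adaptation of identity \eqref{m1} that you carry out, with both inequalities resting on $\inf_{x\in A_t}f(x)\ge t$, $A\subset\{f\ge t_0\}$, and first-slot monotonicity of $\st$. Nothing further is needed.
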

\begin{proof}
Use the same arguments as in the proof of Theorem~\ref{tw3.6}
for a~nondecreasing map $\st$
 in the first coordinate. 
\end{proof}


The following extension of the first equality in \eqref{sug_rown1} 
will be needed to prove the subadditivity property of the lower integral.

\begin{tw}\label{tw4.6} Assume that $\st$ is 
a~continuous link map in the first argument. 
Then for each $(\mu,f)\in \cMm \times \cFf$ and $n\ge 2$ we have
\begin{align*}
\cgdn{f}{n}=\inf_{t\in Y} \big\{t\vee \big(\mu (\{f> t\})\st \cgdn{f}{n-1}\big)\big\}.
\end{align*}
\end{tw}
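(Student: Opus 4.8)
The plan is to prove the asserted identity by two opposite inequalities. Throughout I set $c:=\cgdn{f}{n}$ and $a:=\cgdn{f}{n-1}$, and recall from Definition~\ref{def_lower_sug} that $c=\sup_{t\in Y}\{t\wedge(\mu(\{f\ge t\})\st a)\}$ and that $c\in Y.$ The statement extends the first equality in \eqref{sug_rown1} (which is the special case where $\mu(\{f\ge t\})\st a$ is replaced by $\mu(\{f\ge t\})$), and my argument follows the classical ``$\sup$ versus $\inf$'' scheme for the Sugeno integral.

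First I would establish $c\le\inf_{t\in Y}\{t\vee(\mu(\{f>t\})\st a)\}$ by comparing the two families termwise, i.e. by showing
\[
s\wedge(\mu(\{f\ge s\})\st a)\le t\vee(\mu(\{f>t\})\st a)\qquad\text{for all }s,t\in Y.
\]
If $s\le t$, the left-hand side is at most $s\le t$, hence at most the right-hand side. If $s>t$, then $\{f\ge s\}\subset\{f>t\}$, so monotonicity of $\mu$ together with the fact that $\st$ is nondecreasing in the first argument gives $\mu(\{f\ge s\})\st a\le\mu(\{f>t\})\st a$; thus the left-hand side is at most $\mu(\{f\ge s\})\st a$, and again at most the right-hand side. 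Taking the supremum over $s$ and then the infimum over $t$ proves this inequality, using only monotonicity.

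For the converse I would exploit the crossing behaviour recorded in Proposition~\ref{pro4.3}(c). The defining supremum forces, for every $t\in Y$ with $t>c$, the estimate $\mu(\{f\ge t\})\st a\le c$: indeed, if both $t>c$ and $\mu(\{f\ge t\})\st a>c$ held, then $t\wedge(\mu(\{f\ge t\})\st a)>c$, contradicting the definition of $c.$ Since $\{f>t\}\subset\{f\ge t\}$ yields $\mu(\{f>t\})\st a\le\mu(\{f\ge t\})\st a\le c<t$, we obtain $t\vee(\mu(\{f>t\})\st a)=t$ for every such $t.$ Hence, when $c<\bar y$,
\[
\inf_{t\in Y}\{t\vee(\mu(\{f>t\})\st a)\}\le\inf_{t\in(c,\bar y]}t=c,
\]
which is precisely the limit $c=\lim_{t\searrow c}(t\vee(\mu(\{f>t\})\st a))$ of Proposition~\ref{pro4.3}(c). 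The boundary case $c=\bar y$ I would treat by taking $t=\bar y$: since $f\le\bar y$ we have $\{f>\bar y\}=\emptyset$, so $\mu(\{f>\bar y\})\st a=0\st a\le a\le\bar y$ and therefore $\bar y\vee(0\st a)=\bar y=c.$ Combining both inequalities gives the claimed equality.

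The step needing the most care is the converse inequality, together with the bookkeeping when $\bar y=\infty$ (there $\mu(\{f\ge t\})\st a$ may equal $\infty$, and one must check the $\wedge$/$\vee$ manipulations and the passage to $t\searrow c$ still go through, which they do). I would also note a structural point: in contrast with Theorem~\ref{tw3.5} for the upper integral, the continuity of $\st$ in the first argument is not actually exercised in this argument, because the ``outer'' variable enters only through the identity map $t\mapsto t$, which is already continuous; consequently monotonicity of $\mu$ and of $\st$ alone suffice to run the crossing argument, even when $\mu$ fails to be continuous from above so that $\mu(\{f\ge t\})$ need not converge to $\mu(\{f>c\})$ as $t\searrow c.$
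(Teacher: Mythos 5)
Your proof is correct, but it takes a genuinely different and noticeably more economical route than the paper. The paper proves Theorems~\ref{tw3.5} and \ref{tw4.6} simultaneously in the Appendix by an induction on $n$: it introduces the sup-form sequence $S_n$ and the inf-form sequence $Z_n$, the level sets $I=\{t\colon \mu(\{f\ge t\})\st S_n\ge t\c S_n\}$ and $J=\{t\colon \mu(\{f>t\})\st S_n\ge t\c S_n\}$, shows both are intervals with the same right endpoint $a$, and then identifies $S_{n+1}=a\c S_n=Z_{n+1}$ through a case analysis on whether the intervals are closed or half-open; the continuity of $\st$ in the first argument is invoked there to evaluate suprema such as $\sup_{t\in I^c}\{\mu(\{f\ge t\})\st S_n\}$ as $\mu(\{f\ge a^+\})\st S_n.$ You instead fix $n$, write $c=\cgdn{f}{n}$, and prove the two inequalities directly: the termwise bound $s\wedge(\mu(\{f\ge s\})\st a)\le t\vee(\mu(\{f>t\})\st a)$ gives $c\le\inf_t$, and the crossing observation that $\mu(\{f\ge t\})\st a\le c$ for every $t>c$ forces the inf-side terms to equal $t$ on $(c,\bar y]$, yielding the reverse inequality (with the boundary case $c=\bar y$ handled separately via $0\st a\le a$). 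Both directions are sound, including the $\bar y=\infty$ bookkeeping. What your approach buys is twofold: it is self-contained and avoids the interval machinery, and — as you correctly point out — it shows that the continuity hypothesis on $\st$ is not actually used for Theorem~\ref{tw4.6}, because the outer variable enters only through the identity $t\mapsto t$; this is in genuine contrast with Theorem~\ref{tw3.5}, where the outer variable enters through $t\mapsto t\c\cgn{f}{n-1}$ and the analogous crossing argument does fail without continuity. The paper's unified treatment buys uniformity across the two theorems at the cost of carrying a hypothesis that your argument reveals to be superfluous in the lower case.
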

\begin{proof}
\noindent The proof  is given in Appendix.
\end{proof}

Next, we show that the lower $n$-Sugeno integral is also a~subadditive functional under some extra assumptions. 

\begin{tw}\label{tw4.7} 
Suppose that  $\trdd\colon \mu(\cA)\times \mu(\cA)\to \mu(\cA),$ $f,g\in \cFf$ and
\begin{align}\label{MO1}
[(a+b)\wedge \bar{y}]\vee \big[(\alpha\trd \beta)\circledast ((c+d)\wedge \bar{y})\big]\le [a\vee (\alpha\circledast c)]+[b\vee (\beta\circledast d)]
\end{align}
for $a,b,c,d\in Y$ and $\alpha,\beta\in \mu(\cA)$  with $\circledast\in\{\st,\mathrm{P}\},$ where $\st$ is a~continuous link map in the first argument and $x\mathrm{P}y=x$ for any $x,y.$ If $f,g$ are $\mu$-$\trdd$-subadditive and  $f+g\in \cFf,$  then 
\begin{align}\label{sug_rown}
\cgdn{f+g}{n}\le\cgdn{f}{n}+\cgdn{g}{n}
\end{align}
for all $n\ge 1.$
Moreover, if
\eqref{sug_rown} is valid for all $f,g\in \cFf$ 
such that $f+g\in \cFf$ and $n$ such that $\mu^{\st}_n(X)>0,$
then the monotone measure $\mu^\st_n$ is subadditive.
\end{tw}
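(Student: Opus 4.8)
The statement has two directions, and I would prove them separately. For the forward direction (that $\mu$-$\trdd$-subadditivity plus condition \eqref{MO1} yields \eqref{sug_rown}), the plan is to mirror the induction-on-$n$ strategy already used in the proof of Theorem~\ref{tw3.8}, but driven now by the ``lower'' recurrence from Definition~\ref{def_lower_sug} and its infimum form from Theorem~\ref{tw4.6}. For the converse (that \eqref{sug_rown} forces $\mu^\st_n$ to be subadditive), I would substitute the indicator functions $f=\bar y\,\mI{A}$ and $g=\bar y\,\mI{B\setminus A}$ exactly as in Theorem~\ref{tw3.8}, using Proposition~\ref{pro4.3}\,(e) to identify $\cgdn{\bar y\,\mI{D}}{n}$ with $\mu^\st_n(D)$.

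For the base case $n=1$, both integrals reduce to the Sugeno integral $\rI{f}$, so I would reuse the first-coordinate form $\rI{f}=\inf_{t\in Y}\{t\vee\mu(\{f>t\})\}$ from \eqref{sug_rown1}. The containment $\{f\le a\}\cap\{g\le b\}\subset\{f+g\le a+b\}$ combined with $\mu$-$\trdd$-subadditivity bounds $\mu(\{f+g>(a+b)\wedge\bar y\})$ above by $\mu(\{f>a\})\trd\mu(\{g>b\})$; then \eqref{MO1} with the projection $\odot=\mathrm{P}$ (so the $\circledast$ and $\st$ terms collapse) gives the pointwise estimate, and taking the infimum over $a,b\in Y$ yields $\rI{f+g}\le\rI{f}+\rI{g}$. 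This step is essentially identical to the $n=1$ case of Theorem~\ref{tw3.8}.

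For the inductive step, assume \eqref{sug_rown} holds at stage $n$. The key inequality I would establish, valid for all $a,b\in Y$, is
\begin{align*}
[(a+b)\wedge\bar y]\vee\big[\mu(\{f+g>(a+b)\wedge\bar y\})\st\cgdn{f+g}{n}\big]
&\le\big[a\vee(\mu(\{f>a\})\st\cgdn{f}{n})\big]\\
&\quad+\big[b\vee(\mu(\{g>b\})\st\cgdn{g}{n})\big].
\end{align*}
To get this, I would first apply $\mu$-$\trdd$-subadditivity to replace $\mu(\{f+g>(a+b)\wedge\bar y\})$ by $\mu(\{f>a\})\trd\mu(\{g>b\})$ inside the link term, using monotonicity of $\st$ in its first argument; next I would invoke the induction hypothesis $\cgdn{f+g}{n}\le\cgdn{f}{n}+\cgdn{g}{n}$ together with $\cgdn{f+g}{n}\le\bar y$ to bring the second slot into the form $(\cgdn{f}{n}+\cgdn{g}{n})\wedge\bar y$; then \eqref{MO1} with $\circledast=\st$, $c=\cgdn{f}{n}$, $d=\cgdn{g}{n}$, $\alpha=\mu(\{f>a\})$, $\beta=\mu(\{g>b\})$ delivers exactly the right-hand side. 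With this estimate in hand, Theorem~\ref{tw4.6} rewrites the left side as an expression dominating $\cgdn{f+g}{n+1}$ after taking the infimum over $t$, and taking the infimum over $a$ and then $b$ on the right completes the induction.

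The main obstacle I anticipate is the careful bookkeeping in applying \eqref{MO1}: one must verify that the arguments land in the correct domains ($\cgdn{f}{n},\cgdn{g}{n}\in Y$, which holds since the lower integral takes values in $Y$, and $\mu(\{f>a\}),\mu(\{g>b\})\in\mu(\cA)$), and that the continuity of $\st$ in the first argument is what legitimizes passing from the recurrence to the infimum form of Theorem~\ref{tw4.6} before taking limits in $t$. The interplay between the two roles of $\circledast$ (genuine link map versus projection) in \eqref{MO1} is the subtle point that makes a single hypothesis cover both the $n=1$ reduction and the inductive link term simultaneously.
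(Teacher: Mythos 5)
Your proposal is correct and follows essentially the same route as the paper: induction on $n$ with the $n=1$ case handled via the infimum form of the Sugeno integral and \eqref{MO1} with $\circledast=\mathrm{P}$, the inductive step via $\mu$-$\trdd$-subadditivity, the induction hypothesis, \eqref{MO1} with $\circledast=\st$, and Theorem~\ref{tw4.6}, and the converse via indicator functions and Proposition~\ref{pro4.3}\,(e). The only cosmetic difference is that the paper takes $f=a\mI{A},$ $g=a\mI{B\setminus A}$ with $a\ge\max_{k\le n}\mu^\st_k(A\cup B)$ rather than $a=\bar y$, which is the same choice under the standing convention $\mu^\st_k(X)\in Y.$
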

\begin{proof} 
We use the induction by $n.$ The proof of subadditivity of the Sugeno integral (the case $n=1$ and $\circledast=\mathrm{P}$) can be found in the proof of Theorem~\ref{tw3.8}.
Assume that  inequality \eqref{sug_rown} holds for some $n\ge 2$ and all $\mu$-$\trdd$-subadditive functions $f,g.$ Combining inductive hypothesis and \eqref{MO1} with $\circledast=\star$ yields 
\begin{align*}
[(a+b)\wedge \bar{y}]\vee &\big[\mu(\{f+g>(a+b)\wedge \bar{y}\})\st (\cgdn{f+g}{n}\wedge \bar{y})\big]\notag
\\&\le \big[(a+b)\wedge \bar{y}\big]\vee \big[\big(\mu (\{f> a\})\trd\mu (\{g> b\})\big)\st \big((\cgdn{f}{n}+\cgdn{g}{n})\wedge \bar{y}\big)\big]\nonumber
\\&\le \big[a\vee \big(\mu (\{f> a\})\st \cgdn{f}{n}\big)\big]
+\big[b\vee \big(\mu (\{g> b\})\st \cgdn{g}{n}\big)\big]
\end{align*}
for all $a,b\in Y.$ By Theorem~\ref{tw4.6}, we get 
\begin{align*}
\cgdn{f+g}{n+1}&\le \big[a\vee \big(\mu (\{f> a\})\st \cgdn{f}{n}\big)\big]
+\big[b\vee \big(\mu (\{g> b\})\st \cgdn{g}{n}\big)\big]
\end{align*} 
 for any $a,b\in Y.$
Taking infimum over $a$ and then with $b$ gives \eqref{sug_rown}.

Put $f=a\mI{A}$ and $g=a\mI{B\backslash A}$ in \eqref{sug_rown}, where $a\ge \max_{k\le n}\mu^\st_k(A\cup B).$ Then from Proposition~\ref{pro4.3}\,(e) and by monotonicity of $\mu^\st_n,$ we get $\mu^\st_n(A\cup B)\le \mu^\st_n(A)+\mu^\st_n(B\backslash A)\le \mu^\st_n(A)+\mu^\st_n(B),$ as desired. 
\end{proof}

\begin{example}\label{ex:4.8}
There are many link maps $\st$  with $Y=[0,\infty]$ such that subadditivity of $\mu_n^\st$  with arbitrary $n$ implies subadditivity of $\mu.$ For instance,  all idempotent operators (e.g. $a\st b=a^pb^{1-p}$ 
and $a\st b=p(a\wedge b)+(1-p)(a\vee b)$ with $p\in (0,1)$) as well as mappings
\begin{tasks}(4)
\task $a\st b=f(a)f(b),$  
\task $a\st b=g(ab),$
\task $a\st b=h(a+b),$
\task $a\st b=(a^{q}+b^{q})^{1/q},$ 
\end{tasks}
where $q>0$ and $f,g,h\colon [0,\infty]\to [0,\infty]$ are increasing superadditive functions\footnote{ A~function $f$ is superadditive if $f(a+b)\ge f(a)+f(b)$ for any $a,b.$} vanishing at $0,$  such that $h(x)\le x$ for all $x.$  In order to prove a) and b) one can use the inequality $a^n+b^n\le (a+b)^n.$
\end{example}

\begin{corollary}
If $\mu$ is a~subadditive monotone measure, or $f,g\in \mathcal{F}_{(X,[0,\infty])}$ are comonotone functions, then  
\begin{align}\label{low_subn}
\cgdn[+]{f+g}{n}\le\cgdn[+]{f}{n}+\cgdn[+]{g}{n}
\end{align}
for all $n\ge 1.$
Moreover, if 
\eqref{low_subn} is valid for some $n$ and all $f,g\in \mathcal{F}_{(X,[0,\infty])},$ then $\mu$ is subadditive.
\end{corollary}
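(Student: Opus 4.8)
The plan is to obtain both directions as instances of Theorem~\ref{tw4.7}, exactly as Corollaries~\ref{cor1} and~\ref{cor2} are deduced from Theorem~\ref{tw3.8}. Throughout I take the link map $\st=+$. Since we work in $\mathcal{F}_{(X,[0,\infty])}$, we have $Y=[0,\infty]$ and $\bar{y}=\infty$, so in \eqref{MO1} every truncation $(a+b)\wedge\bar{y}$ collapses to $a+b$, and the standard addition is a continuous link map in its first coordinate. For the first alternative I set $a\trd b=(a+b)\wedge\mu(X)$; by the remark following Definition~\ref{defsub}, subadditivity of $\mu$ makes every pair $f,g$ $\mu$-$\trdd$-subadditive. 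For the second alternative I take $\trdd=\vee$, under which comonotone functions are $\mu$-$\trdd$-subadditive. In either case the hypotheses of Theorem~\ref{tw4.7} are met once \eqref{MO1} is verified, and the conclusion \eqref{sug_rown} is precisely \eqref{low_subn} with $\st=+$.

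The single estimate that drives the verification of \eqref{MO1} is $\alpha\trd\beta\le\alpha+\beta$ for $\alpha,\beta\in\mu(\cA)\subset[0,\infty]$, which holds for both choices since $(\alpha+\beta)\wedge\mu(X)\le\alpha+\beta$ and $\alpha\vee\beta\le\alpha+\beta$. Using this, for $\circledast=+$ the left side of \eqref{MO1} is bounded by $(a+b)\vee[(\alpha+c)+(\beta+d)]$, which is at most $[a\vee(\alpha+c)]+[b\vee(\beta+d)]$ because $a\le a\vee(\alpha+c)$ and $b\le b\vee(\beta+d)$ handle the first term of the join while $(\alpha+c)\le a\vee(\alpha+c)$ and $(\beta+d)\le b\vee(\beta+d)$ handle the second. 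For $\circledast=\mathrm{P}$ the inequality is even simpler, reducing to $(a+b)\vee(\alpha+\beta)\le[a\vee\alpha]+[b\vee\beta]$. Hence \eqref{MO1} holds in both alternatives, and Theorem~\ref{tw4.7} gives \eqref{low_subn}.

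For the converse I invoke the second part of Theorem~\ref{tw4.7}: if \eqref{sug_rown} holds for all admissible $f,g$ and some $n$ with $\mu^\st_n(X)>0$, then $\mu^\st_n$ is subadditive. With $\st=+$ the iterated measure is $\mu^+_n(A)=n\,\mu(A)$ for every $A\in\cA$ (from the recurrence $\mu^+_{k+1}=\mu+\mu^+_k$), so $\mu^+_n(X)=n\,\mu(X)>0$ and the subadditivity of $\mu^+_n$, namely $n\,\mu(A\cup B)\le n\,\mu(A)+n\,\mu(B)$, is equivalent to subadditivity of $\mu$. The only bookkeeping lies in running \eqref{MO1} through the two values of $\circledast$, but since $\alpha\trd\beta\le\alpha+\beta$ reduces everything to the case $\trdd=+$, no genuine obstacle appears; the result is a direct corollary.
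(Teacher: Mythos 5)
Your proof is correct and follows essentially the same route as the paper: both apply Theorem~\ref{tw4.7} with $\st=+$ and the two choices $a\trd b=(a+b)\wedge\mu(X)$ or $\trdd=\vee$, and both obtain the converse from the subadditivity of $\mu^{\st}_n$. Your explicit verification of \eqref{MO1} and the direct computation $\mu^{+}_n=n\mu$ merely spell out what the paper delegates to the observation that $\trdd\le+$ and to Example~\ref{ex:4.8}\,(d) with $q=1$.
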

\begin{proof} Put  $Y=[0,\infty],$ $\st =+$ and 
$a\trd b=(a+b)\wedge \mu(X)$
 or $\trdd=\vee$ in Theorem~\ref{tw4.7} and use Example~\ref{ex:4.8}\,(d) with $q=1.$
\end{proof}

Next, we give a~partial solution to the problem posed in \cite{mesiar13a}. 
The question is how to compute the {\it pseudo-decomposition integral of $n$-th order} defined as 
\begin{align}\label{def}
\bmm{n}{\oplus}{\odot}{f}=\sup\Big\{\bigoplus_{i=1}^n (a_i\odot\mu(A_i))\colon \bigoplus_{i=1}^n a_i\mI{A_i}\le f,\; a_i\in Y,\; 
A_1\subset \ldots \subset A_n \Big\}
\end{align}
based on a~pseudo-addition $\oplus\colon Y^2\to Y$ and a~$\oplus$-fitting pseudo-multiplication $\odot\colon Y\times [0,\mu(X)]\to Y$ (see \cite[Definition 3.1 and 3.4]{benvenuti}). 
The integral $\text{I}^{\oplus,\odot}_n$ is also called the \textit{Benvenuti integral of $n$-th order}. Our aim is to compute the integral 
$\bmm{n}{+}{\wedge}{f}.$
By the definition \eqref{def} we get 
\begin{align}\label{MU1}
\bmm{n}{+}{\wedge}{f}
&=\sup\Big\{\sum_{i=1}^n (a_i\wedge\mu(A_i))\colon\sum_{i=1}^n a_i\mI{A_i} \le f,\, A_{1}\subset \ldots \subset A_{n}\Big\}\nonumber
\\&=\sup\Big\{\sum_{i=1}^n (a_i\wedge\mu(A_i))\colon\sum_{i=1}^n\Big(\sum_{k=i}^n a_k\Big)\mI{A_i\setminus A_{i-1}} \le f,\,  A_{1}\subset \ldots \subset A_{n}\Big\}\nonumber
\\&=\sup\Big\{\sum_{i=1}^n \big((b_i-b_{i+1})\wedge \mu(\{ f\ge b_i\})\big)\colon  0=b_{n+1}\le b_n\le \ldots\le b_1\le \bar{y}\Big\}
\end{align}
with $b_i=\sum_{k=i}^n a_k$   and $A_0:=\emptyset,$
but computation of the integral from formula \eqref{MU1} is still a~difficult task. However, there is a~connection with the lower $n$-Sugeno integral.


\begin{tw}\label{tw4.10}
For all  $(\mu,f)\in \cMm \times\cFf$ and $n\ge 1$ 
\begin{align}\label{sub11}
\bmm{n}{+}{\wedge}{f}=\cgdn[+]{\mu,f}{n}.
\end{align}
\end{tw}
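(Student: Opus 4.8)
The plan is to prove \eqref{sub11} by induction on $n$, showing that $\bmm{n}{+}{\wedge}{f}$ obeys the very same recurrence as the lower $n$-Sugeno integral. Writing $m(t):=\mu(\{f\ge t\})$, the base case $n=1$ is immediate from the last line of \eqref{MU1}, which for $n=1$ collapses to $\sup_{t\in Y}\{t\wedge m(t)\}=\rI{f}=\cgdn[+]{\mu,f}{1}$. For the inductive step it suffices to establish
\begin{align}\label{plan-rec}
\bmm{n+1}{+}{\wedge}{f}=\sup_{t\in Y}\big\{t\wedge\big(m(t)+\bmm{n}{+}{\wedge}{f}\big)\big\},
\end{align}
since $\cgdn[+]{\mu,f}{n+1}=\sup_{t\in Y}\{t\wedge(m(t)+\cgdn[+]{\mu,f}{n})\}$ by Definition~\ref{def_lower_sug} with $\st=+$: both sequences start at the Sugeno integral and satisfy the same one-step map $C\mapsto\sup_t\{t\wedge(m(t)+C)\}$, which forces $\bmm{n}{+}{\wedge}{f}=\cgdn[+]{\mu,f}{n}$ for every $n$. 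Note that \eqref{plan-rec} is self-contained in terms of the $\mathrm{I}$'s, so no hypothesis on the $\cgdn[+]{\mu,f}{n}$ is needed.

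I would prove \eqref{plan-rec} by two inequalities, working throughout with the threshold form \eqref{MU1}, i.e. $\bmm{k}{+}{\wedge}{f}=\sup\{\Phi_k(b)\}$ where $\Phi_k(b)=\sum_{i=1}^k (b_i-b_{i+1})\wedge m(b_i)$ and the supremum runs over $\bar y\ge b_1\ge\cdots\ge b_k\ge b_{k+1}=0$. For ``$\le$'', take any admissible $(n+1)$-tuple $b$, set $t=b_1$, and split off the top layer. The tail $T:=\sum_{i=2}^{n+1}(b_i-b_{i+1})\wedge m(b_i)$ is the value of an admissible $n$-tuple, so $T\le \bmm{n}{+}{\wedge}{f}$, and telescoping of the widths gives $T\le b_2$. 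Hence $(b_1-b_2)\wedge m(b_1)+T$ is $\le b_1$ (use $T\le b_2$) and $\le m(b_1)+\bmm{n}{+}{\wedge}{f}$ (use $T\le\bmm{n}{+}{\wedge}{f}$), so $\Phi_{n+1}(b)\le b_1\wedge(m(b_1)+\bmm{n}{+}{\wedge}{f})$, which is dominated by the right-hand side of \eqref{plan-rec}.

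The reverse inequality ``$\ge$'' is the main obstacle. Fixing $t$ and abbreviating $R=\bmm{n}{+}{\wedge}{f}$, the naive idea is to stack a top layer at $t$ on a near-optimal $n$-tuple $c$; but if $c$ is \emph{inefficient} (reaches a height $c_1$ far above its own value) there is no room for the top layer and the construction fails. The remedy, and the key step, is a compression lemma that exploits the monotonicity of $t\mapsto m(t)$: given any $n$-tuple $c$, put $w_j=(c_j-c_{j+1})\wedge m(c_j)$ and replace each $c_i$ by $c'_i=\sum_{j\ge i} w_j$. Then $c'_i\le c_i$, so $m(c'_i)\ge m(c_i)\ge w_i$, whence every layer of $c'$ is full, $\Phi_n(c')=\sum_j w_j=\Phi_n(c)$ and $c'_1=\Phi_n(c')$. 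Thus $R$ is approximated by tuples whose height satisfies $c'_1\le R$.

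With such a tuple in hand (so $c_1\le R$ and $\Phi_n(c)\ge R-\varepsilon$) I distinguish two cases. If $t\le c_1$, then $t\le R\le\bmm{n+1}{+}{\wedge}{f}$ (the last bound because adjoining a zero layer shows $\bmm{n+1}{+}{\wedge}{f}\ge\bmm{n}{+}{\wedge}{f}$), and the target $t\wedge(m(t)+R)\le t$ is already attained. If $t>c_1$, set $b_1=t$, $b_{i+1}=c_i$, so that $\Phi_{n+1}(b)=(t-c_1)\wedge m(t)+\Phi_n(c)$; either $t-c_1\ge m(t)$, giving $\Phi_{n+1}(b)\ge m(t)+R-\varepsilon$, or $t-c_1<m(t)$, giving $\Phi_{n+1}(b)\ge (t-c_1)+(R-\varepsilon)\ge (t-R)+(R-\varepsilon)=t-\varepsilon$ precisely because $c_1\le R$. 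In both sub-cases $\Phi_{n+1}(b)\ge t\wedge(m(t)+R)-\varepsilon$; letting $\varepsilon\to0$ and taking the supremum over $t$ yields \eqref{plan-rec}. The heart of the argument is therefore the compression lemma: without the monotonicity of $\mu(\{f\ge\cdot\})$ one cannot bound the height of near-optimal tuples, and the stacking step breaks down.
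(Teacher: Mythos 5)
Your proof is correct, but it follows a genuinely different route from the paper's. The paper works directly with the full nested supremum in \eqref{MU1}: it defines the partial suprema $M_k$ and auxiliary quantities $N_k$ recursively, absorbs one threshold variable at a time via inequalities of the type $b\wedge\big(\mu(\{f\ge b\})+\rI{f}\big)\le b\wedge \cgd[+]{f}$, and for the reverse bound takes limits $b_n\nearrow\rI{f}$, $b_{n-1}\nearrow\cgd[+]{f}$, etc., relying on Lemma~\ref{lemat}\,(c) and Proposition~\ref{pro4.3}\,(b). You instead prove the one-step recurrence $\bmm{n+1}{+}{\wedge}{f}=\sup_{t}\{t\wedge(\mu(\{f\ge t\})+\bmm{n}{+}{\wedge}{f})\}$ directly and conclude by induction; the paper only obtains that recurrence \emph{as a corollary} of the theorem. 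The decisive ingredient you supply that the paper does not need in this form is the compression lemma: replacing a near-optimal $n$-tuple $c$ by $c'_i=\sum_{j\ge i}\big((c_j-c_{j+1})\wedge\mu(\{f\ge c_j\})\big)$ preserves the value while forcing the height $c'_1$ down to the value itself, which is exactly what makes the stacking step in the ``$\ge$'' direction go through (your two sub-cases $t-c_1\ge\mu(\{f\ge t\})$ and $t-c_1<\mu(\{f\ge t\})$ both check out, the latter precisely because $c_1\le\bmm{n}{+}{\wedge}{f}$). Your argument is more modular and avoids the limit machinery entirely (only the degenerate case $\bmm{n}{+}{\wedge}{f}=\infty$ deserves a one-line remark, and it is handled by $\bmm{n+1}{+}{\wedge}{f}\ge\bmm{n}{+}{\wedge}{f}$); what the paper's heavier setup buys is that the same $M_k$/$N_k$ scaffolding is reused essentially verbatim for the dual infimum representation in Theorem~\ref{tw4.12}, which your recurrence-based argument would not give for free.
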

\begin{proof}
See Appendix.
\end{proof}

Combining the definition of $\cgdn[+]{\mu,f}{n}$ with Theorem \ref{tw4.10} gives  the following simple recurrence scheme 
\begin{align*} 
\bmm{n}{+}{\wedge}{f}=
\sup_{y\in Y}\big\{y\wedge \big(\mu(\{f\ge y\})+\bmm{n-1}{+}{\wedge}{f}\big)\big\}
\end{align*}
with $\bmm{1}{+}{\wedge}{f}:=\rI{f}.$

\begin{example}
Let $X=[0,1],$ $f(x)=x$ and $\mu (A)=(\lambda(A))^{1/2},$ where $\lambda$ is the Lebesgue measure. Then 
$$
\text{I}_n=\frac{2\,\text{I}_{n-1}-1+\sqrt{5-4\,\text{I}_{n-1}}}{2},\quad n=1,2,\ldots,
$$ 
where $\text{I}_n:=\bmm{n}{+}{\wedge}{f}$ and $\text{I}_0:=0.$ If $\mu (A)=(\lambda (A))^2,$ then
$$
\text{I}_n=\frac{3-\sqrt{5-4\,\text{I}_{n-1}}}{2},\quad n=1,2,\ldots.
$$
\end{example}

The next result provides a~connection between the lower $n$-Sugeno integral and the generalized Choquet integral introduced in~\cite{bustince} and deeply studied in~\cite{mesiar15}.

\begin{tw}\label{tw4.12}
Let $(\mu,f)\in\cMm\times\cFf$ and $n\ge 2.$ 
The lower $n$-Sugeno integral  can be represented as
\begin{align*}
\cgdn[+]{f}{n}&=
\inf\Big\{\sum_{i=1}^n \big((b_i-b_{i+1})\vee \mu(\{ f> b_i\})\big)\colon  0=b_{n+1}\le b_n\le \ldots\le b_1 \le \bar{y}\Big\}.
\end{align*}
\end{tw}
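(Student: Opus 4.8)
With $\st=+$ fixed as in the statement, the plan is to bound $\cgdn[+]{f}{n}$ against the right-hand side from both sides, using only the recursive infimum-description of the lower integral. I would write $v_k:=\cgdn[+]{f}{k}$ (so $v_0:=0$ and $v_1=\rI{f}$) and let $T_n$ denote the infimum on the right. By Theorem~\ref{tw4.6} with $\st=+$ one has $v_k=\inf_{t\in Y}\{t\vee(\mu(\{f>t\})+v_{k-1})\}$ for $k\ge2$, and $v_1=\inf_{t\in Y}\{t\vee\mu(\{f>t\})\}$ by~\eqref{sug_rown1}. Since $a+b\ge a\wedge b$ for $a\in[0,\infty]$ and $b\in Y$, Proposition~\ref{pro4.4} shows that $(v_k)_{k\ge1}$ is nondecreasing; these are the only properties of the integral I would use.

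For the inequality $v_n\le T_n$ I would argue by induction on $n$, the case $n=1$ being~\eqref{sug_rown1}. Fixing an admissible chain $0=b_{n+1}\le b_n\le\ldots\le b_1\le\bar{y}$ and setting $\Sigma':=\sum_{i=2}^n((b_i-b_{i+1})\vee\mu(\{f>b_i\}))$, the truncated chain $(b_2,\ldots,b_n,0)$ is admissible for order $n-1$, so $v_{n-1}\le T_{n-1}\le\Sigma'$ (the first inequality by the induction hypothesis, the second since $\Sigma'$ is one admissible value), while telescoping gives $\Sigma'\ge\sum_{i=2}^n(b_i-b_{i+1})=b_2$. Taking $t=b_1$ in Theorem~\ref{tw4.6} then yields
\[
v_n\le b_1\vee\big(\mu(\{f>b_1\})+v_{n-1}\big)\le b_1\vee\big(\mu(\{f>b_1\})+\Sigma'\big).
\]
As $\Sigma'\ge b_2$ forces $b_1\le(b_1-b_2)+\Sigma'$, the right-hand side is at most $\big((b_1-b_2)\vee\mu(\{f>b_1\})\big)+\Sigma'$, which is precisely the sum attached to $\mathbf{b}$; taking the infimum over admissible chains gives $v_n\le T_n$.

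The reverse bound $T_n\le v_n$ is the crux. Here the naive choice $b_{n-k+1}=v_k$ fails, since a monotone measure need not be continuous from below and $\mu(\{f>v_k\})$ may exceed the increment $v_k-v_{k-1}$. Instead I would fix $\varepsilon>0$ and, for each $k$ with $v_k<\bar{y}$, invoke Proposition~\ref{pro4.3}\,(c) (and Lemma~\ref{lemat}\,(d) when $k=1$)—the one-sided limit realizing the infimum in Theorem~\ref{tw4.6}—to choose $s_k$ slightly above $v_k$ with $s_k\vee(\mu(\{f>s_k\})+v_{k-1})\le v_k+\varepsilon$, hence $\mu(\{f>s_k\})\le v_k-v_{k-1}+\varepsilon$; when $v_k=\bar{y}$ I would take $s_k=\bar{y}$, where $\mu(\{f>\bar{y}\})=0$. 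Because $(v_k)$ is nondecreasing, the $s_k$ can be arranged so that $0=:s_0\le s_1\le\ldots\le s_n\le\bar{y}$ with $s_k-s_{k-1}\le v_k-v_{k-1}+\varepsilon$. Putting $b_{n-k+1}:=s_k$ and reindexing, the resulting sum is $\sum_{k=1}^n((s_k-s_{k-1})\vee\mu(\{f>s_k\}))\le\sum_{k=1}^n(v_k-v_{k-1})+n\varepsilon=v_n+n\varepsilon$, so $T_n\le v_n+n\varepsilon$; letting $\varepsilon\to0$ closes the argument.

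The step I expect to be the main obstacle is exactly this threshold construction. One genuinely cannot evaluate the increments at the values $v_k$ themselves: failure of continuity from below can make $\mu(\{f>v_k\})$ large, and one must step strictly to the right of $v_k$ to force $\mu(\{f>s_k\})$ below $v_k-v_{k-1}$, which is precisely what the right-hand limit in Proposition~\ref{pro4.3}\,(c) provides. The remaining work—keeping the chain $(s_k)$ ordered and inside $[0,\bar{y}]$, controlling the gaps $s_k-s_{k-1}$, and handling the boundary case $v_k=\bar{y}$—is routine bookkeeping absorbed into the single parameter $\varepsilon$.
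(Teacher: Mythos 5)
Your argument is correct, and for the hard inequality $T_n\le\cgdn[+]{f}{n}$ it is essentially the paper's proof in different clothing: the paper evaluates the nested infima by taking one-sided limits $b_n\searrow\rI{f}$, $b_{n-1}\searrow\cgdn[+]{f}{2}$, and so on, invoking Lemma~\ref{lemat}\,(d) and Proposition~\ref{pro4.3}\,(c) at each stage, which is precisely your $\varepsilon$-approximate chain $(s_k)$ made implicit; your diagnosis that one must step strictly to the right of $v_k$ because $\mu$ need not be continuous from below is exactly the point of those limits. Where you genuinely depart from the paper is the inequality $\cgdn[+]{f}{n}\le T_n$: the paper sets up the recursive families $M_k$, $N_k$ and pushes the bound $b\vee\big(\mu(\{f>b\})+\cgdn[+]{f}{k-1}\big)\ge b\vee\cgdn[+]{f}{k}$ through a chain of nested infima, whereas your induction on $n$ --- truncating the chain at the top index, using $v_{n-1}\le\Sigma'$ together with the telescoping bound $\Sigma'\ge b_2$ to absorb $b_1\vee\big(\mu(\{f>b_1\})+\Sigma'\big)$ into $\big((b_1-b_2)\vee\mu(\{f>b_1\})\big)+\Sigma'$ --- is shorter, needs only Theorem~\ref{tw4.6}, and avoids the auxiliary $N_k$ entirely. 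Both routes rest on the same key lemmas; yours buys a noticeably cleaner write-up of the easy direction at no cost, while the paper's uniform $M_k/N_k$ framework has the advantage of being shared almost verbatim with the proof of Theorem~\ref{tw4.10}.
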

\begin{proof}
See Appendix.
\end{proof}

The lower $2$-Sugeno integral is neither maxitive  nor minitive homogeneous functional.

\begin{example}
Consider $f$ as in the Example~\ref{ex5}. Let $\mu(B)=0{.}25$ and $\mu(X)=1.$ Then
 $\cgd[+]{f}=0{.}5$
 and $\cgd[+]{(1/3)\mI{X}\vee f}=7/12,$ so $\cgd[+]{(1/3)\mI{X}\vee f}>(1/3)\vee \cgd[+]{f}.$
\end{example}


\begin{example}
Let $f=0{.}5\mI{A},$ $\mu(A)=0{.}5,$  $\mu(X)=1$ and $\st =\cdot.$ After simple calculations, we get $\cgd[\cdot]{f}=0{.}25$ and $\cgd[\cdot]{0{.}1\mI{X}\wedge f}=0{.}05.$
Thus, $\cgd[\cdot]{0{.}1\mI{X}\wedge f}<0{.}1 \wedge \cgd[\cdot]{f}.$ 
\end{example}

Now we give a~sufficient condition for  minitive/maxitive homogeneity.
\begin{pro}\label{pro4.15}
Suppose that $n\ge 2$ and  
$(\mu,f)\in \cMm\times \cFf.$
\begin{enumerate}[noitemsep]
\item[(a)]    
$\cgdn{a\mI{X}\wedge f}{n}=a\wedge \cgdn{f}{n}$  for any $a\in Y$
if 
$a\wedge (b \st (a\wedge c))=a\wedge (b\st c)$  for all $a,c\in Y$ and $b\in \mu(\cA).$
\item[(b)] 
$\cgdn{a\mI{X}\vee f}{n}=a\vee \cgdn{f}{n}$ for any $a\in Y$ if  $\mu(X)\ge \bar{y}$ 
and  
$b\wedge c \le b \st c \le b\vee c$  for all $b\in [0,\infty]$ and $c\in Y.$
 \end{enumerate}
\end{pro}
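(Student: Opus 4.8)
The plan is to prove both parts by induction on $n$, the base case $n=1$ being the minitive resp.\ maxitive homogeneity of the Sugeno integral. For part (a) I would mirror the proof of Proposition~\ref{pro3.12}\,(a), using Theorem~\ref{tw4.5} in place of Theorem~\ref{tw3.6}. The base case $\cgdn{a\mI{X}\wedge f}{1}=a\wedge\rI{f}$ is standard: for $t\le a$ one has $\{a\mI{X}\wedge f\ge t\}=\{f\ge t\}$, and for $t>a$ this level set is empty, so the supremum in $\rI{a\mI{X}\wedge f}$ is restricted to $t\le a$ and equals $a\wedge\rI{f}$. For the inductive step ($n\ge2$) I would insert $\inf_{x\in A}(a\wedge f(x))=a\wedge\inf_{x\in A}f(x)$ and the induction hypothesis $\cgdn{a\mI{X}\wedge f}{n-1}=a\wedge\cgdn{f}{n-1}$ into Theorem~\ref{tw4.5}, making each summand $(a\wedge\inf_{x\in A}f(x))\wedge\big(\mu(A)\st(a\wedge\cgdn{f}{n-1})\big)$. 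Since this quantity is $\le a$, the hypothesis $a\wedge(b\st(a\wedge c))=a\wedge(b\st c)$ (with $b=\mu(A)\in\mu(\cA)$, $c=\cgdn{f}{n-1}$) lets me replace $\mu(A)\st(a\wedge\cgdn{f}{n-1})$ by $\mu(A)\st\cgdn{f}{n-1}$ after intersecting with $a$; factoring the common $a\wedge$ out of the supremum and applying Theorem~\ref{tw4.5} to $f$ yields $a\wedge\cgdn{f}{n}$. This part should be routine.

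For part (b) the sup-representation is less convenient, so I would instead induct directly on the defining recurrence. The base case $\cgdn{a\mI{X}\vee f}{1}=a\vee\rI{f}$ is maxitive homogeneity of the Sugeno integral, and this is where $\mu(X)\ge\bar{y}$ enters: for $t\le a$ the level set of $a\mI{X}\vee f$ is all of $X$ with $t\wedge\mu(X)=t$ (as $\mu(X)\ge\bar{y}\ge t$), while the contribution of the levels $t\le a$ to $\rI{f}$ never exceeds $a$. For the step, write $g=a\mI{X}\vee f$ and split the supremum defining $\cgdnm{g}{k+1}$ at $t=a$. Since $\{g\ge t\}=X$ for $t\le a$ and, by the induction hypothesis $\cgdn{g}{k}=a\vee\cgdn{f}{k}$ together with $\mu(X)\ge\bar{y}$ and $b\st c\ge b\wedge c$, one has $\mu(X)\st\cgdn{g}{k}\ge\mu(X)\wedge\cgdn{g}{k}=\cgdn{g}{k}\ge a$, the low-level part contributes exactly $a$, leaving $\cgdn{g}{k+1}=a\vee S$ with $S=\sup_{t>a}\{t\wedge(\mu(\{f\ge t\})\st(a\vee\cgdn{f}{k}))\}$. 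The target is to match this with $a\vee\cgdn{f}{k+1}=a\vee S'$, where $S'=\sup_{t>a}\{t\wedge(\mu(\{f\ge t\})\st\cgdn{f}{k})\}$.

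The comparison of $S$ and $S'$ is the crux and the step I expect to be the main obstacle, since the compensative upper bound $b\st c\le b\vee c$ is too weak to control $S$ term by term. I would resolve it by a dichotomy on the position of $a$ relative to $\cgdn{f}{k}$. If $a\le\cgdn{f}{k}$, then $a\vee\cgdn{f}{k}=\cgdn{f}{k}$, so the integrands of $S$ and $S'$ coincide and $a\vee S=a\vee S'$. If $a>\cgdn{f}{k}$, then since the sequence $(\cgdn{f}{n})_n$ is nondecreasing by Proposition~\ref{pro4.4} (whose hypothesis $b\st c\ge b\wedge c$ is part of our assumptions) we get $a>\cgdn{f}{k}\ge\rI{f}$; this forces $\mu(\{f\ge t\})\le a$ for every $t>a$, for otherwise $\rI{f}\ge t\wedge\mu(\{f\ge t\})>a$, a contradiction. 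Consequently $\mu(\{f\ge t\})\st a\le\mu(\{f\ge t\})\vee a=a$ and likewise $\mu(\{f\ge t\})\st\cgdn{f}{k}\le a$ for all $t>a$, so $S\le a$ and $S'\le a$, whence $a\vee S=a=a\vee S'$. In either case $\cgdn{g}{k+1}=a\vee\cgdn{f}{k+1}$, closing the induction and establishing (b).
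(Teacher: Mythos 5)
Your proof is correct, and part (a) is exactly the paper's argument (the paper simply defers to the proof of Proposition~\ref{pro3.12}\,(a) with Theorem~\ref{tw4.5} in place of Theorem~\ref{tw3.6}). For part (b), however, you take a genuinely different route. The paper also inducts, but it works from the set-supremum representation of Theorem~\ref{tw4.5}: it writes $\cgdn{a\mI{X}\vee f}{n}=\sup_{A}\{(a\vee \inf_{x\in A}f(x))\wedge[\mu(A)\st(a\vee\cgdn{f}{n-1})]\}$, distributes $\st$ and $\wedge$ over $\vee$ to split the supremum into four pieces, uses $\mu(X)\st a\ge\mu(X)\wedge a=a$ to extract the constant $a$, and then kills the leftover cross term $\sup_A\{\inf_{x\in A}f(x)\wedge(\mu(A)\st a)\}$ by bounding it with $\st\le\vee$ and Proposition~\ref{pro4.4}. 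You instead stay with the defining $t$-supremum, split it at $t=a$, and resolve the comparison of $\sup_{t>a}\{t\wedge(\mu(\{f\ge t\})\st(a\vee\cgdn{f}{k}))\}$ with $\sup_{t>a}\{t\wedge(\mu(\{f\ge t\})\st\cgdn{f}{k})\}$ by the dichotomy $a\le\cgdn{f}{k}$ versus $a>\cgdn{f}{k}$, in the latter case using $a>\rI{f}$ to force $\mu(\{f\ge t\})\le a$ for $t>a$ so that both suprema are dominated by $a$. Both arguments consume exactly the same hypotheses ($\mu(X)\ge\bar y$, $\wedge\le\st\le\vee$, Proposition~\ref{pro4.4}); the paper's version is slicker once Theorem~\ref{tw4.5} is available and avoids any case analysis, while yours is more elementary in that it needs only the recurrence of Definition~\ref{def_lower_sug} and makes visible where each hypothesis is actually used (the lower bound $\st\ge\wedge$ for the levels $t\le a$, the upper bound $\st\le\vee$ only in the case $a>\cgdn{f}{k}$). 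All the individual steps you flag check out, including the empty-supremum edge case $a=\bar y$.
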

\begin{proof}  The proof of part $(a)$ is similar to that of Proposition~\ref{pro3.12}\,(a) (applying Theorem~\ref{tw4.5}), so we omit it.  \\
Now we show $(b)$ by induction. The proof for $n=2$ will be omitted as it is quite similar to  the proof of the second induction step. 
From Theorem~\ref{tw4.5} and the induction hypothesis, we have
\begin{align*}
\cgdn{a\mI{X}\vee f}{n}&=\sup_{A} \big\{ (a\vee \inf_{x\in A} f(x)) \wedge \big[\mu(A)\st (a\vee \cgdn{f}{n-1})\big]\big\}
\\&=\sup_{A} \big\{ (a\vee \inf_{x\in A} f(x)) \wedge \big[(\mu(A)\st a)\vee (\mu(A)\st \cgdn{f}{n-1})\big]\big\}
\\&=\sup_A\big\{ [a\wedge (\mu(A)\st a)]\vee \big[a\wedge (\mu(A)\st \cgdn{f}{n-1})\big]
\\&\qquad \vee [\inf_{x\in A} f(x) \wedge(\mu(A)\st a)]\vee \big[\inf_{x\in A} f(x)\wedge (\mu(A)\st \cgdn{f}{n-1})\big]\big\},
\end{align*}
where we write $\sup_A$ instead of $\sup_{A\in\cA}.$ Furthermore
\begin{align*}
\cgdn{a\mI{X}\vee f}{n}&=[a\wedge (\mu(X)\st a)]\vee  [a\wedge (\mu(X)\st \cgdn{f}{n-1})]\vee \sup_A\big\{ \inf_{x\in A} f(x) \wedge (\mu(A)\st a)\big\} \notag
\\&\qquad \vee  \sup_A\big\{\inf_{x\in A} f(x)\wedge (\mu(A)\st \cgdn{f}{n-1})\big\}.\notag
\end{align*}
By the assumption that $\mu(X) \st a\ge \mu(X)\wedge a=
a$ and the fact that  $a\wedge(\mu(X)\st \cgdn{f}{n-1})\le a,$ we have
\begin{align}\label{MO6}
\cgdn{a\mI{X}\vee f}{n}&=a\vee \sup_A\{ \inf_{x\in A} f(x) \wedge (\mu(A)\st a) \}\vee  \cgdn{f}{n}.
\end{align}
Observe that
\begin{align*}
  \sup_A\big\{ \inf_{x\in A} f(x) \wedge (\mu(A)\st a)\big\}&\le   \sup_A\big\{ \inf_{x\in A} f(x) \wedge (\mu(A)\vee a)\big\}
  \\&=\sup_A\big\{ (\inf_{x\in A} f(x) \wedge \mu(A))\vee (\inf_{x\in A} f(x)\wedge a)\big\}
  \\&\le \rI{f}\vee a\le \cgdn{f}{n}\vee a,
\end{align*}
where the last inequality follows from Proposition~\ref{pro4.4}. By~\eqref{MO6}, we obtain $
\cgdn{a\mI{X}\vee f}{n}=a\vee \cgdn{f}{n},$ as desired.
\end{proof}

The condition in $(a)$ is satisfied if $x\st a\ge a$ for any $x$  and $a,$ e.g. $x\st a:=(x^p+a^p)^{1/p}$ for $p>0.$ On the other hand, any OWA operator of the form $x\st y= p(x\wedge y)+(1-p)(x\vee y)$ satisfies the condition in $(b)$ for $p\in [0,1].$

\section{Applications}\label{sec:application}
\subsection{(A) Scientometric indices}

We put $X=\mN,$ where $\mN=\{1,2,\ldots\}$ denotes the set of all positive integers, and $\mu\colon 2^\mN \to [0,\infty]$ is the counting measure, i.e., $\mu(A)=\mathrm{Card}(A)$ for any $A\in 2^\mN.$ A~scholar with some publications is formally described by an infinite vector $\bx=(x_1,x_2,\ldots),$ called a~\textit{scientific record},
where $x_i\in\mN_0$ with $\mN_0=\mN\cup\{0\}$ such that $x_{1}\ge x_{2}\ge\ldots.$
The positive value of $x_i$ gives the number of citations of $i$-th scholar publication, and
the value $x_i=0$ means either a~paper with zero citations or a~nonexisting paper. 
From now on we consider the scientific records with $x_1\ge 1.$
The $h$-index of $\bx$ is defined as follows \cite{hirsch,mesiar14}
\begin{align*}
\rH(\bx) = \max\{k\colon x_k\ge k\} = \max_{k} \{k\wedge x_k\}.
\end{align*}
Note that there are several papers characterizing the Hirsch index via various axioms, e.g.~\cite{brandao,Miroiu,Quesada2,woeginger}. An interesting axiom $\rH(\bx)=\rH(\by_\bx)$ is called the \textit{symmetry} of the $h$-index, see \cite[Proposition 3.1]{woeginger2}. Here $\by_\bx:=(y_1,y_2,\ldots)$ is called the \textit{conjugate} of $\bx$ with $y_i=\sum_{k=1 }^\infty \mI{\{x_k\ge i\}}$ providing the number of publications with at least $i$ citations. 

As it is well known, the Hirsch index has some drawbacks. In order to compensate some defects of $h$-index, many authors have introduced new scientometric indices that lead to better discrimination of some types of data than $h$-index (see \cite{mesiar14,mingers,woeginger}).
Here we discuss a~few of them and show that the upper/lower $n$-Sugeno integrals 
generalize some known scientometric indices. Firstly, recall that Narukawa and Torra~\cite{torra2} have shown that $h$-index is the Sugeno integral with respect to counting measure. In consequence, the upper/lower $n$-Sugeno integral generalizes $h$-index too. 

\paragraph{(i) Generalized Kosmulski index}

There are several modifications of $h$-index based on the input $k,$ e.g.
$h_\lambda(\bx)=\max\{k\colon x_k\ge \lambda k\}$ of Van Eck~\cite{eck}, $h(2)$-index $\mathsf{H2}(\bx) = \max\{k\colon x_k\ge k^2\}$ of Kosmulski \cite{kosmulski}, or its extended version $\max\{k\colon x_k\ge k^m\}$ with $m=3,4,\ldots.$ In general, for any nondecreasing function $s\colon[0,\infty]\to [0,\infty]$ the \textit{generalized Kosmulski index} is given by $\mathsf{K}_s(\bx) = \max \{k\colon x_k\ge s(k)\},$ see \cite{deineko}\footnote{In order to get an integer-valued index, in the original paper authors consider the function $s\colon\mN_0\to \mN_0$ 
with $s(0)=0$ and $s(k)\ge 1$ for each $k\in\mN.$}.

Now we will show the connection between generalized Kosmulski index and upper/lower $2$-Sugeno integral. For this purpose consider
$\c_s\colon [0,\infty]^2\to [0,\infty]$ 
defined as $a\c_{s} b=s(a)$ with $s\colon [0,\infty]\to [0,\infty]$
being a~nondecreasing function such that $s(0)=0.$ Immediately, $\c_s$ is an admissible fusion as well as a~link function with $Y=[0,\infty].$ 
Note that each scientific record $\bx$ uniquely determines a~function $f\colon X\to \mN_0$ as $x_i=f(i),$ and vice versa. Hence, the notation $\cg[\circ_s]{\bx}$ is justified and
\begin{align*}
\cg[\circ_s]{\bx}=\max_k\{s(k)\wedge \mu(\{i\colon x_i\ge k\})\}=\max_j\{s(x_j)\wedge \mu(\{i\colon x_i\ge x_j\})\}.
\end{align*}
Since $\mu$ is the counting measure, we get
\begin{align}\label{chwilowa1}
\cgn[\circ_s]{\bx}{2}=\max_j\{s(x_j)\wedge j\}=\max_j\big\{s(\mu(\{i\colon y_i\ge j\}))\wedge j\big\}=\cgdn[\circ_s]{\by_\bx}{2},
\end{align}
where $\by_\bx$ is the conjugate of $\bx.$ Moreover,
\begin{align}\label{chwilowa2}
    \cgdn[\circ_s]{\bx}{2}&=\max_k\big\{k\wedge s(\mu(\{i\colon x_i\ge k\}))\big\}=\max_k\big\{x_k\wedge s(\mu(\{i\colon x_i\ge x_k\}))\big\} \notag   \\&=\max_k\{x_k\wedge s(k)\}=\max_k\{\mu(\{i\colon y_i\ge k\})\wedge s(k)\}=\cgn[\circ_s]{\by_\bx}{2}.
\end{align}
It is easy to see that $\cgdn[\circ_s]{\bx}{2}$ and $\cgn[\circ_s]{\bx}{2}$ for $s(a)=a$ (under the convention $s(\infty)=\infty$) coincide with the $h$-index of $\bx.$ In consequence, this proves Proposition~31 from \cite{woeginger2}, i.e., the symmetry $\rH(\bx)=\rH(\by_\bx)$ of $h$-index. However, the integrals $\cgn[\circ_s]{\cdot}{2}$ and  $\cgdn[\circ_s]{\cdot}{2}$ are not symmetric in general, i.e., the equalities
$\cgn[\circ_s]{\bx}{2}=\cgn[\circ_s]{\by_\bx}{2}$ and $\cgdn[\circ_s]{\bx}{2}=\cgdn[\circ_s]{\by_\bx}{2}$ need not hold for each $s$ and $\bx.$ 

\begin{example}
For $s(a)=2a$ and the scientific record $\bx=(3,0,\ldots)$ with $\by_\bx=(1,1,1,0,\ldots)$ we have $\cgn[\circ_s]{\bx}{2}=1=\cgdn[\circ_s]{\by_\bx}{2},$ but $\cgn[\circ_s]{\by_\bx}{2} = 2=\cgdn[\circ_s]{\bx}{2}.$ Note that for 
$s(a)=\lambda a$ with $\lambda>0$ it follows from \eqref{chwilowa2} that $\cgdn[\c_s]{\bx}{2}= \max_k\{x_k\wedge (\lambda k)\}.$
This index was introduced in \cite[Definition 2]{gag1}. 
\end{example}

\begin{pro} \label{pro:5.2}
Let $\c_s\colon [0,\infty]^2\to [0,\infty]$ be such that 
$a\c_{s} b=s(a)$ with $s\colon [0,\infty]\to [0,\infty]$ being an increasing and continuous function such that $s(0)=0.$
Then for each scientific record $\bx$ we have
\begin{itemize}[noitemsep]
    \item[(a)] $\mathsf{K}_s(\bx)=\cgn[\circ_{\lfloor\widehat{s}\rfloor}]{\bx}{2},$
    \item[(b)] $\mathsf{K}_s(\bx)=
\lfloor \cgn[\circ_{\widehat{s}}]{\bx}{2}\rfloor =\big\lfloor\widehat{s}\big(\cgdn[\circ_s]{\bx}{2}\big)\big\rfloor,$
\end{itemize}
where $\lfloor\cdot\rfloor$ is the floor function and $\widehat{s}= s^{-1}.$
\end{pro}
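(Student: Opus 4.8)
The plan is to reduce both claims to a value-indexed form of the upper $2$-Sugeno integral. First I would record that for the counting measure $\mu$ and any nondecreasing $g\colon[0,\infty]\to[0,\infty]$ with $g(0)=0$, the admissible fusion map $a\c_g b=g(a)$ gives, by Theorem~\ref{tw3.6},
\[
\cgn[\circ_g]{\bx}{2}=\sup_{A}\big\{g(\inf_{i\in A}x_i)\wedge \mathrm{Card}(A)\big\}=\max_{j}\{g(x_j)\wedge j\},
\]
the last equality because among sets of fixed cardinality $m$ the infimum $\inf_{i\in A}x_i$ is largest for $A=\{1,\dots,m\}$ (the record is nonincreasing), so testing the top-$j$ sets suffices. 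This step uses only monotonicity of $g$, hence applies to the discontinuous map $g=\lfloor\widehat{s}\rfloor$ as well as to $g=\widehat{s}$. I would also note the threshold: since $x_k$ is nonincreasing and $s$ increasing, $x_k-s(k)$ is nonincreasing, so $x_k\ge s(k)$ holds exactly for $k\le K$, where $K:=\mathsf{K}_s(\bx)$; in particular $x_{K+1}<s(K+1)$.

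For (a), applying the formula with $g=\lfloor\widehat{s}\rfloor$ yields $\cgn[\circ_{\lfloor\widehat{s}\rfloor}]{\bx}{2}=\max_j\{\lfloor\widehat{s}(x_j)\rfloor\wedge j\}$. Here $a_j:=\lfloor\widehat{s}(x_j)\rfloor$ is a nonincreasing sequence of nonnegative integers, and for any such sequence $\max_j\{a_j\wedge j\}=\max\{k\colon a_k\ge k\}$ by the same threshold argument. Since $k$ is an integer, $\lfloor\widehat{s}(x_k)\rfloor\ge k\iff \widehat{s}(x_k)\ge k\iff x_k\ge s(k)$, so this maximum equals $K$, which proves (a) with no outer floor.

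For (b), taking $g=\widehat{s}$ gives $\cgn[\circ_{\widehat{s}}]{\bx}{2}=\max_j\{\widehat{s}(x_j)\wedge j\}$; then, as $\widehat{s}$ is increasing with $j=\widehat{s}(s(j))$, one has $\widehat{s}(x_j)\wedge j=\widehat{s}(x_j\wedge s(j))$ and $\widehat{s}$ commutes with the (essentially finite) maximum, so by \eqref{chwilowa2}
\[
\cgn[\circ_{\widehat{s}}]{\bx}{2}=\widehat{s}\big(\max_j\{x_j\wedge s(j)\}\big)=\widehat{s}\big(\cgdn[\circ_s]{\bx}{2}\big).
\]
Hence both quantities in (b) equal $\lfloor\widehat{s}(\cgdn[\circ_s]{\bx}{2})\rfloor$, and it remains to evaluate this floor. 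By the threshold, $\cgdn[\circ_s]{\bx}{2}=\max_j\{x_j\wedge s(j)\}=s(K)\vee x_{K+1}$, so $\widehat{s}(\cgdn[\circ_s]{\bx}{2})=K\vee\widehat{s}(x_{K+1})$; since $x_{K+1}<s(K+1)$ forces $\widehat{s}(x_{K+1})<K+1$, this value lies in $[K,K+1)$ and its floor is $K$.

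The routine ingredients are the two threshold computations and the $h$-index identity. The one step needing care is the identity $\widehat{s}(x_j)\wedge j=\widehat{s}(x_j\wedge s(j))$ together with the claim that $x_{K+1}$ lies in the range of $s$, so that $\widehat{s}(x_{K+1})$ is defined: this is exactly where continuity of $s$ is used, via the intermediate value theorem applied to $0\le x_{K+1}<s(K+1)$. I would finally check the degenerate case $K=0$ (when $x_1<s(1)$) under the conventions $s(0)=\widehat{s}(0)=0$, where both parts still return $0$.
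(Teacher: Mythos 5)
Your proposal is correct and follows essentially the same route as the paper: both reduce everything to the identities $\cgn[\circ_g]{\bx}{2}=\max_j\{g(x_j)\wedge j\}$ and $\cgdn[\circ_s]{\bx}{2}=\max_j\{x_j\wedge s(j)\}$ (the paper's \eqref{chwilowa1}--\eqref{chwilowa2}), then use the threshold characterization of $\mathsf{K}_s$ together with the facts that $\lfloor a\wedge k\rfloor=\lfloor a\rfloor\wedge k$ and that increasing maps commute with $\wedge$ and with attained maxima. The only cosmetic difference is that in (b) you prove the un-floored identity $\cgn[\circ_{\widehat{s}}]{\bx}{2}=\widehat{s}(\cgdn[\circ_s]{\bx}{2})$ and evaluate the floor via $s(K)\vee x_{K+1}$, whereas the paper inserts the floor one step earlier; your explicit remarks on where continuity of $s$ enters and on the degenerate case $K=0$ are welcome additions rather than deviations.
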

\proof
Observe that 
$\mathsf{K}_s(\bx)=\max\{k\colon s^{-1}(x_k)\ge k\}=\max\{k\colon \lfloor s^{-1}(x_k)\rfloor \ge k\}=\max_k\{k\wedge \lfloor s^{-1}(x_k)\rfloor\}.$
From \eqref{chwilowa1}, we get $\mathsf{K}_s(\bx)=\cgn[\circ_{\lfloor\widehat{s}\rfloor}]{\bx}{2},$ as $\c_{\lfloor \widehat{s}\rfloor}$ is an admissible fusion function. Next, note that
$\cgn[\circ_{\lfloor\widehat{s}\rfloor}]{\bx}{2}=\lfloor \max_k\{k\wedge s^{-1}(x_k)\}\rfloor,$  since $\lfloor k\wedge a\rfloor=k\wedge \lfloor a\rfloor$ for each $a\ge 0$ and $k\in \mN$ and $\max_k g(z_k)=g(\max_k z_k)$  for any nondecreasing function $g.$
Using \eqref{chwilowa1} again, we obtain $\mathsf{K}_s(\bx)=
\lfloor \cgn[\circ_{\widehat{s}}]{\bx}{2}\rfloor.$
Moreover,
$\mathsf{K}_s(\bx)=\lfloor s^{-1}(\max_k\{s(k)\wedge x_k)\})\rfloor = \big\lfloor\widehat{s}\big(\cgdn[\circ_s]{\bx}{2}\big)\big\rfloor,$ where the latter equality follows from \eqref{chwilowa2}.
\qed\medskip

All the above considerations are true also for the upper/lower $n$-Sugeno integral for any $n\ge 2.$


\paragraph{(ii) Upper and lower $2$-$h$-indices} 
We return back to the original indices our motivation comes from. Indeed, Mesiar and G\k{a}golewski \cite{mesiar14} introduced the upper $2$-$h$-index and the lower $2$-$h$-index of a~scientific record $\bx$ as follows:
 \begin{align}
 \rH^u_2(\bx)&=\max_k\big\{(k+\rH(\bx))\wedge x_{k}\big\}
,\label{hu}\\
 \rH^l_2(\bx)&=\rH(\bx)+\max_k\big\{(k-\rH(\bx))_+\wedge x_{k}\big\}=\max_k\big\{k\wedge (x_k+\rH(\bx))\big\},
\label{hd}
 \end{align}
where $a_+=\max(a,0).$
In other words, upper $2$-$h$-index is $h$-index increased by the value of $h$-index calculated for the scientist's output after removing $h$ citations from each work. On the other hand, lower $2$-$h$-index is $h$-index increased by the value of $h$-index of a~scientific record $\bx|_{\rH(\bx)}=(x_{\rH(\bx)+1}, x_{\rH(\bx)+2}, \ldots).$ The latter $h$-index of $\bx|_{\rH(\bx)}$ corresponds to $h$-index of $\bx$ without publications in the Hirsch core, cf.~\cite{Rousseau}. 

\begin{example}\label{apex:1}
Let $\bx=(6,6,4,3,1,1,1,0,\ldots).$ Clearly, $\rH(\bx)=3,$ and $\bx|_{\rH(\bx)}=(3,1,1,1,0,\ldots)$ is the scientific record obtained from $\bx$ after removing the Hirsch core, i.e., the first three papers. Since $\rH(\bx|_{\rH(\bx)})=1,$ we have $\rH_2^l(\bx)=4.$ Analogously, $\rH_2^u(\bx)=5,$ see Fig.~\ref{Fig1}.
\end{example}

\begin{figure}[h]
\centering
\includegraphics[scale=0.9]{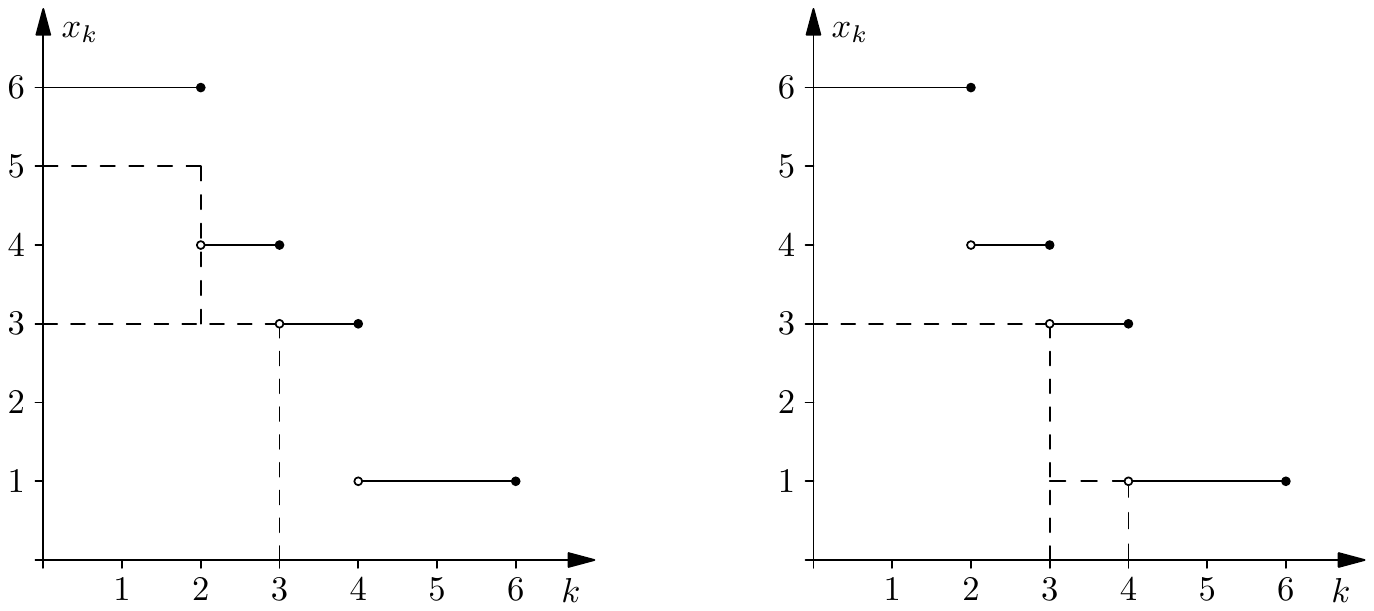}
\caption{
Illustration of formula \eqref{hu} for $\rH^u_2(\bx)=5$ (left) and \eqref{hd} for $\rH^l_2(\bx)=4$ (right).}
\label{Fig1}
\end{figure}

\begin{pro}
For each scientific record $\bx$ we have $\cg[+]{\bx} = \rH^l_2(\bx)=\cgdn[+]{\by_\bx}{2}$ and $\cgdn[+]{\bx}{2} = \rH^u_2(\bx)=\cg[+]{\by_\bx}.$
\end{pro}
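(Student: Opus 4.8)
The plan is to read each order-two integral as a single supremum over the threshold $t$ and then to evaluate that supremum in two different ways. Write $h:=\rH(\bx)=\rI{\bx}$, the identification of the $h$-index with the Sugeno integral being recalled above. With $\c=\st=+$ and the initial condition $\rI{\bx}=h$, Definitions~\ref{def_upper_sug} and~\ref{def_lower_sug} give
\begin{align*}
\cg[+]{\bx}=\sup_{t\in Y}\big\{(t+h)\wedge\mu(\{\bx\ge t\})\big\},\qquad
\cgdn[+]{\bx}{2}=\sup_{t\in Y}\big\{t\wedge(\mu(\{\bx\ge t\})+h)\big\}.
\end{align*}
Recall that $\mu$ is the counting measure, $\bx$ is nonincreasing, and the conjugate satisfies $y_i=\mu(\{\bx\ge i\})$ and $x_j=\mu(\{\by_\bx\ge j\})$.

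First I would carry out the diagonal evaluation that pairs $t$ with $x_k$. For $t>0$ the set $\{\bx\ge t\}$ is an initial segment of $\mN$, so with $k:=\mu(\{\bx\ge t\})$ one has $x_k\ge t$ and hence $(t+h)\wedge\mu(\{\bx\ge t\})=(t+h)\wedge k\le(x_k+h)\wedge k$; conversely the choice $t=x_k$ gives $\mu(\{\bx\ge x_k\})\ge k$, so $(x_k+h)\wedge k$ is a lower bound for the supremum. (The value at $t=0$ is only $h\wedge\mu(X)=h$, which is dominated because $x_h\ge h$ forces $\mu(\{\bx\ge h\})\ge h$.) This yields $\cg[+]{\bx}=\max_k\{(x_k+h)\wedge k\}=\rH^l_2(\bx)$, and the identical argument for the lower integral gives $\cgdn[+]{\bx}{2}=\max_k\{x_k\wedge(k+h)\}=\rH^u_2(\bx)$, which are exactly~\eqref{hd} and~\eqref{hu}. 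Applied verbatim to $\by_\bx$, the same two identities give $\cgdn[+]{\by_\bx}{2}=\rH^u_2(\by_\bx)$ and $\cg[+]{\by_\bx}=\rH^l_2(\by_\bx)$.

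Next I would evaluate the very same suprema a second way, exploiting that citations are integers. For $t\in(m-1,m]$ with $m\in\mN$ one has $\{\bx\ge t\}=\{\bx\ge m\}$, hence $\mu(\{\bx\ge t\})=y_m$; since $t\mapsto(t+h)\wedge y_m$ and $t\mapsto t\wedge(y_m+h)$ are nondecreasing, the supremum over each such interval is attained at $t=m$. Discarding the dominated contribution at $t=0$, this gives $\cg[+]{\bx}=\max_m\{(m+h)\wedge y_m\}$ and $\cgdn[+]{\bx}{2}=\max_m\{m\wedge(y_m+h)\}$. Invoking the symmetry $\rH(\by_\bx)=\rH(\bx)=h$ established above, the right-hand sides are precisely $\rH^u_2(\by_\bx)$ and $\rH^l_2(\by_\bx)$. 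Comparing the two evaluations of each integral then closes both chains,
\begin{align*}
\cg[+]{\bx}=\rH^l_2(\bx)=\rH^u_2(\by_\bx)=\cgdn[+]{\by_\bx}{2},\qquad
\cgdn[+]{\bx}{2}=\rH^u_2(\bx)=\rH^l_2(\by_\bx)=\cg[+]{\by_\bx},
\end{align*}
the outermost equalities being the first-step identities read at $\by_\bx$.

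The two supremum-to-maximum reductions are routine: they repeat the substitution already used for~\eqref{sug_rown1} and in~\eqref{chwilowa1}--\eqref{chwilowa2}. The point that needs genuine care --- and the only real obstacle --- is that $t$ ranges over the whole of $Y=[0,\infty]$ while $\bx$ is integer-valued and $\mu(X)=\infty$: one must check that the boundary value at $t=0$ never exceeds the interior maximum (which rests on $x_h\ge h$) and that ties in $\bx$ do not spoil the diagonal pairing (handled by $x_k\ge t$ and $\mu(\{\bx\ge x_k\})\ge k$). Everything else is bookkeeping.
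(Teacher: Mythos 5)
Your proof is correct and follows essentially the same route as the paper: the paper's own argument simply says to repeat the considerations of \eqref{chwilowa1}--\eqref{chwilowa2} with the map $+$ in place of $\c_s$, and your two evaluations of each supremum (the diagonal pairing $t=x_k$ giving \eqref{hu}--\eqref{hd}, and the integer-threshold reduction giving the conjugate record via $y_m=\mu(\{\bx\ge m\})$ together with $\rH(\by_\bx)=\rH(\bx)$) are exactly those considerations, spelled out with the boundary cases at $t=0$ handled explicitly.
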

\proof
For proving the statements, the admissible fusion function and the link function is $\c_s=+.$ Repeating the considerations from \eqref{chwilowa1} and \eqref{chwilowa2} we finish the proof.
\qed\medskip

\paragraph{(iii) $\rH_\alpha$ and $\rH^\beta$-indices}

Next we show that the indices $\rH_\alpha$ and $\rH^\beta$ recently introduced in \cite{JMS} as
\begin{align*}
 \rH_\alpha(\bx)&=\max_k \big\{\lfloor (x_k/\alpha)\wedge \mu(\{i\colon x_i\ge x_k\})\rfloor\big\}=\max_k \{\lfloor (x_k/\alpha)\wedge k\rfloor\},\qquad \alpha>0,\\
 \rH^\beta(\bx)& =\big\lceil \max_k \big\{x_k\wedge (\mu(\{i\colon x_i\ge x_k\})/\beta)\big\}\big\rceil=\lceil \max_k \{x_k\wedge (k/\beta) \}\rceil,\qquad \beta>0,
\end{align*}
are also a~special case of upper/lower Sugeno integral. Here, 
$\lceil\cdot\rceil$ is the  
ceiling of a~real number. 
Index $\rH_\alpha$ is able to compensate a~lower number of citations and $\rH^\beta$ compensates a~lower number of papers.

\begin{pro}\label{pro:5.6}
For each scientific record $\bx$ 
we have 
\begin{itemize}[noitemsep]
\item[(i)] $\rH_\alpha(\bx)= \cgn[\c_s]{\bx}{2}$ with $s(a)=\lfloor a/\alpha\rfloor,$
\item[(ii)] $\rH^\beta(\bx) =  \cgdn[\c_s]{\bx}{2}$
with $s(a)=\lceil a/\beta\rceil.$
\end{itemize}
\end{pro}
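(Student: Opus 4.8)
The plan is to read off both identities directly from the closed forms \eqref{chwilowa1} and \eqref{chwilowa2}, which already express the upper and lower $2$-Sugeno integrals built from $\c_s$ as maxima over the coordinates of $\bx,$ and then to match these maxima to the definitions of $\rH_\alpha$ and $\rH^\beta$ using two elementary rounding identities. Throughout, I would exploit that $X=\mN$ carries the counting measure, so every index $k$ is a positive integer and every citation value $x_k$ is a nonnegative integer; this integrality is exactly what makes the rounding manipulations legitimate. In both parts the map $\c_s$ is an admissible fusion map (resp. a link map) because $s$ is nondecreasing with $s(0)=0,$ hence \eqref{chwilowa1} and \eqref{chwilowa2} apply verbatim.

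For part (i), I would take $s(a)=\lfloor a/\alpha\rfloor,$ so that \eqref{chwilowa1} gives
\begin{align*}
\cgn[\c_s]{\bx}{2}=\max_k\{s(x_k)\wedge k\}=\max_k\big\{\lfloor x_k/\alpha\rfloor\wedge k\big\}.
\end{align*}
Then I would invoke the identity $\lfloor k\wedge a\rfloor=k\wedge\lfloor a\rfloor,$ valid for every real $a\ge 0$ and every $k\in\mN$ (the same fact already used in the proof of Proposition~\ref{pro:5.2}), to rewrite each term as $k\wedge\lfloor x_k/\alpha\rfloor=\lfloor(x_k/\alpha)\wedge k\rfloor.$ Taking the maximum over $k$ yields $\max_k\{\lfloor(x_k/\alpha)\wedge k\rfloor\}=\rH_\alpha(\bx),$ which settles (i).

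For part (ii), I would take $s(a)=\lceil a/\beta\rceil,$ so that \eqref{chwilowa2} gives
\begin{align*}
\cgdn[\c_s]{\bx}{2}=\max_k\{x_k\wedge s(k)\}=\max_k\big\{x_k\wedge\lceil k/\beta\rceil\big\}.
\end{align*}
The matching identity here is $m\wedge\lceil r\rceil=\lceil m\wedge r\rceil$ for every integer $m\ge 0$ and every real $r\ge 0$; applying it with $m=x_k$ and $r=k/\beta$ turns each term into $\lceil x_k\wedge(k/\beta)\rceil.$ Since $\lceil\cdot\rceil$ is nondecreasing, one has $\max_k\lceil z_k\rceil=\big\lceil\max_k z_k\big\rceil$ (the ceiling analogue of the monotonicity step used in Proposition~\ref{pro:5.2}), whence
\begin{align*}
\cgdn[\c_s]{\bx}{2}=\max_k\lceil x_k\wedge(k/\beta)\rceil=\big\lceil\max_k\{x_k\wedge(k/\beta)\}\big\rceil=\rH^\beta(\bx),
\end{align*}
establishing (ii).

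Everything is essentially routine once \eqref{chwilowa1} and \eqref{chwilowa2} are in hand; the only point requiring genuine care, and the natural candidate for the main obstacle, is the ceiling identity $m\wedge\lceil r\rceil=\lceil m\wedge r\rceil.$ Unlike the floor identity, it places the integrality assumption on the \emph{first} slot of the minimum (the value $m=x_k$) rather than on the second, and it would fail for non-integer $m.$ I would therefore make explicit via a short case split on $r\ge m$ versus $r<m$ that it is $x_k\in\mN_0$ being rounded exactly, not $k/\beta.$ No other subtlety arises.
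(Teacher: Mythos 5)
Your proposal is correct and follows essentially the same route as the paper's proof: both parts read the integrals off from \eqref{chwilowa1} and \eqref{chwilowa2}, then apply the floor identity $\lfloor a\wedge k\rfloor=\lfloor a\rfloor\wedge k$ for (i) and the ceiling identity together with $\max_k g(z_k)=g(\max_k z_k)$ for nondecreasing $g$ for (ii). Your explicit remark that the ceiling identity needs integrality in the slot occupied by $x_k$ is a correct (and slightly more careful) articulation of exactly the fact the paper invokes.
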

\proof 
(i) Based on \eqref{chwilowa1} we have $\cgn[\c_s]{\bx}{2}=\max_k\{\lfloor x_k/\alpha \rfloor \wedge k\}.$ To finish the proof one can use the fact that $\lfloor a\wedge k\rfloor=\lfloor a\rfloor \wedge k$ for each $a\ge 0$ and $k\in \mN.$

\noindent (ii) Using \eqref{chwilowa2} we get $\cgdn[\c_s]{\bx}{2}= \max_k \{x_k\wedge \lceil k/\beta\rceil\}.$ Since  $\lceil a\wedge k\rceil=\lceil a\rceil \wedge k$ for $a\ge 0$ and $k\in \mN,$ so $\cgdn[\c_s]{\bx}{2}= \max_k \{\lceil x_k\wedge  (k/\beta)\rceil\}.$ To get the statement, use 
$\max_k g(z_k)=g(\max_k z_k)$ for any nondecreasing  function $g.$
\qed\medskip

Using the similar arguments as in the proof of Proposition~\ref{pro:5.6}\,(ii) one can show that
$\rH_\alpha(\bx) = \lfloor \cgn[\c_s]{\bx}{2}\rfloor$
with $s(a)=a/\alpha$ and $\rH^\beta(\bx)=\lceil \cgdn[\c_s]{\bx}{2}\rceil$ with $s(a)=a/\beta.$ Thence and from Proposition~\ref{pro:5.2}\,(b) we conclude that $\rH_\alpha$ is a~special case of the generalized Kosmulski index $\mathsf{K}_s$ with $s(k)=\alpha k.$

\paragraph{(iv) Iterated $h$-index}
In 2009 in Garc\'{i}a-P\'{e}rez \cite{garcia,garcia1} considered a~multidimensional $h$-index 
and showed that the additional components are useful to distinguish individuals with the  same $h$-index. This approach has been studied further in~\cite{beal} in order to provide its axiomatic characterization. Formally, the iterated $h$-index $\riH$ of a~scientific record $\bx$ is a~vector $\riH(\bx)=(\riH_1(\bx),\riH_2(\bx),\ldots)$ with the components $\riH_n(\bx)$ defined for each $n\in\mathbb{N}$ by
$$\riH_n(\bx) = \max_k\big\{k\wedge x_{\riH_{0}(\bx)+\ldots+\riH_{n-1}(\bx)+k}\big\}$$
with $\riH_0(\bx):=0.$ Clearly, $\riH_{1}(\bx) = \rH(\bx)$ and $\riH_{1}(\bx)\ge \riH_{2}(\bx)\ge\ldots.$ Also, it is easy to see that $\riH_n(\bx)=\cgdn[+]{\bx}{n}-\cgdn[+]{\bx}{n-1}.$ Thus,

\begin{pro}\label{prop:iteratedH}
For each scientific record $\bx$ and each $n\in\mN$ we have $\cgdn[+]{\bx}{n} = \sum_{k=1}^n \riH_k(\bx).$
\end{pro}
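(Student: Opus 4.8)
The plan is to argue by induction on $n$, turning the whole statement into a telescoping sum once the single-step increment $\cgdn[+]{\bx}{n}-\cgdn[+]{\bx}{n-1}=\riH_n(\bx)$ (the relation recorded just before the statement) has been secured. Throughout I work in the concrete setting $X=\mN$ with $\mu$ the counting measure and $f(i)=x_i$, so that $\mu(\{\bx\ge t\})=\#\{i\colon x_i\ge t\}$ is finite, nonincreasing in $t$, and, since $\bx$ is integer-valued and nonincreasing, the supremum in Definition~\ref{def_lower_sug} is attained at an integer level $t$. The base case $n=1$ is immediate: $\cgdn[+]{\bx}{1}=\rI{\bx}=\rH(\bx)=\riH_1(\bx)$, using that the $h$-index is the Sugeno integral with respect to the counting measure (\cite{torra2}) together with $\riH_0(\bx)=0$.

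For the inductive step, set $S_{n-1}:=\cgdn[+]{\bx}{n-1}$ and assume $S_{n-1}=\sum_{k=1}^{n-1}\riH_k(\bx)$. Specializing the recurrence of Definition~\ref{def_lower_sug} to the link map $\star=+$ gives
\[
\cgdn[+]{\bx}{n}=\sup_{t}\big\{t\wedge\big(\mu(\{\bx\ge t\})+S_{n-1}\big)\big\}.
\]
Granting the increment identity $\cgdn[+]{\bx}{n}-S_{n-1}=\riH_n(\bx)$, we immediately obtain $\cgdn[+]{\bx}{n}=S_{n-1}+\riH_n(\bx)=\sum_{k=1}^{n}\riH_k(\bx)$, which closes the induction and proves the statement. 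Thus the entire content of the proposition is the telescoping of these increments, anchored at the base case $\riH_1(\bx)=\rH(\bx)$.

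The main obstacle is therefore the increment identity itself, which the text calls ``easy to see''. I would prove it by locating the integer level $t^{\ast}$ at which the supremum above is realized: as $t\mapsto t$ is increasing while $t\mapsto\mu(\{\bx\ge t\})+S_{n-1}$ is nonincreasing, the optimum lies at the crossing of these two profiles, exactly as for the ordinary Sugeno integral (Lemma~\ref{lemat}\,(b)). The delicate point is to interpret this crossing in terms of the record truncated by the first $S_{n-1}$ ``levels'' already consumed by $\cgdn[+]{\bx}{n-1}$, so that $\mu(\{\bx\ge t^{\ast}\})$ aligns with the shifted index $\riH_{0}(\bx)+\ldots+\riH_{n-1}(\bx)+k$ appearing in the definition of $\riH_n(\bx)$; the conjugate record $\by_\bx$ and the $\bx\leftrightarrow\by_\bx$ correspondence already exploited in~\eqref{chwilowa1}--\eqref{chwilowa2} are the natural bookkeeping device for this matching. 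Carrying out this identification of the optimal level, and verifying that no earlier level can beat it, is the only nonroutine part of the argument; everything else is the telescoping described above.
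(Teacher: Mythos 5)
Your telescoping scaffold and base case reproduce exactly what the paper does: the paper gives no proof beyond the one-line remark that $\riH_n(\bx)=\cgdn[+]{\bx}{n}-\cgdn[+]{\bx}{n-1}$ (``it is easy to see''), after which the proposition is immediate. But your proposal never establishes that increment identity --- you explicitly ``grant'' it and only sketch a crossing-level argument --- and this deferred step is not merely the only nonroutine part, it is where the argument as you set it up breaks down. In the lower recurrence $\sup_{t}\{t\wedge(\mu(\{\bx\ge t\})+S_{n-1})\}$ the shift $S_{n-1}$ inflates the \emph{paper counts} $\mu(\{\bx\ge t\})$, so the crossing point moves along the \emph{citation} axis: the resulting increment is the $h$-increment of the conjugate record, $\riH_n(\by_\bx)$, not $\riH_n(\bx)$. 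Concretely, for $\bx=(6,6,4,3,1,1,1,0,\ldots)$ of Example~\ref{apex:1} one gets $\cgdn[+]{\bx}{2}=\rH^u_2(\bx)=5$ (take $t=5$: then $5\wedge(2+3)=5$), whereas $\riH_1(\bx)+\riH_2(\bx)=\rH(\bx)+\rH(\bx|_{\rH(\bx)})=3+1=4$. So the increment identity you plan to prove is false verbatim for the lower integral, and no identification of the optimal level can repair it; your own alignment heuristic, carried out honestly, surfaces this mismatch.

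What is true --- and what your conjugate-bookkeeping remark points toward --- is the same statement for the \emph{upper} integral: $\cgn[+]{\bx}{n}=\sum_{k=1}^{n}\riH_k(\bx)$, equivalently $\cgdn[+]{\by_\bx}{n}=\sum_{k=1}^{n}\riH_k(\bx)$, which is consistent with the paper's unnumbered proposition in part (ii) stating $\cg[+]{\bx}=\rH^l_2(\bx)$ (note $\rH^l_2(\bx)=\riH_1(\bx)+\riH_2(\bx)$ by \eqref{hd}) and with the duality \eqref{chwilowa1}--\eqref{chwilowa2}; the upper/lower macros in the printed statement appear to be swapped, as they also are in part (v), where the computation gives $\cgn[+]{\bx}{\infty}=p\text{-index}$ and $\cgdn[+]{\bx}{\infty}=x_1.$ A correct completion of your plan is then to run the induction on the upper recurrence: for the integer $S=\cgn[+]{\bx}{n-1}$ show that $\sup_{t}\{(t+S)\wedge\mu(\{\bx\ge t\})\}=S+\rH(\bx|_S),$ which follows by restricting to integer levels $m\ge 1,$ writing $\mu(\{\bx\ge m\})=y_m$ and $(m+S)\wedge y_m=S+\big(m\wedge(y_m-S)\big)$ whenever $y_m\ge S$ (the term $t=0$ supplies the value $S$), and observing that $\big((y_m-S)_+\big)_{m\ge1}$ is precisely the conjugate of the truncated record $\bx|_S,$ whose $h$-index is $\riH_n(\bx).$
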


\paragraph{(v) $p$-index and $c$-index}

It follows from Proposition~\ref{prop:iteratedH} that the functional defined by $\cgdn[+]{\bx}{\infty}:=\sup _n\cgdn[+]{\bx}{n}$ gives a~number of publications with at least one citation. This index is known as the \textit{$p$-index} (see~\cite[Definition 2.5]{eck}).
On the other hand, the number $\cgn[+]{\bx}{\infty}:=\sup _n\cgn[+]{\bx}{n}=x_1$ represents a~number of citations of the most important paper and it is called the \textit{$c$-index} \cite[Definition 2.6]{eck}, 
or the 
maximum-index \cite[Definition~2.5]{woeginger}. The $p$- and $c$-indices measure almost completely opposite aspects of the performance of a~researcher. The $p$-index can be seen as a~measure of productivity with focusing on productivity (i.e., number of papers) and paying almost no attention to impact (i.e., number of times a~paper has been cited). On the other hand, the $c$-index can be seen as a~measure of impact with focusing on impact and paying no attention at all to productivity. For instance, it prefers a~single highly cited paper over a~large number of slightly lower cited papers. Finally, the $s$-index defined by $\mathsf{s}(\bx) =\sum_{i=1}^\infty
x_i$ equals the total number of citations of all papers published by the scientist. Thus, the $s$-index takes into account all papers published by a~scientist and not only the most cited paper (as in $c$-index).

\subsection{(B) Aggregation functions}
Nowadays, aggregation processes naturally appear in almost every discipline and importance of aggregation functions may be seen in various applications including data fusion, decision making, computer science, social choice, etc. We shall show here that both the upper $n$-Sugeno and the lower $n$-Sugeno integrals are new aggregation functions under some mild assumptions on maps $\c$ and $\st$ with a~very natural max-min-type representations.
Firstly we recall the definition of an aggregation function.

\begin{definition}\cite[Definition~1.5]{beliakov}
A~function $\sA\colon [0,\bar{y}]^m\to [0,\bar{y}]$ is said to be an \textit{$m$-ary aggregation function}, if it is nondecreasing
and it satisfies the boundary conditions $\sA(0,\ldots,0)=0$ and $\sA(\bar{y},\ldots,\bar{y})=\bar{y}.$
\end{definition}

Put $X=\{1,2,\ldots,m\},$ $Y=[0,\bar{y}]$ 
and $\mu\in\cMm$ such that $\mu(X)\ge \bar{y}.$ 
For $n\ge 2$ the upper $n$-Sugeno integral $\cgn{\bx}{n}$ 
with $\bx=(x_1,\ldots,x_m),$ $x_i\in Y,$ and an admissible fusion map $\c$ satisfying $\bar{y}\c \bar{y}=\bar{y},$ is an $m$-ary aggregation function. Indeed, from Proposition~\ref{pro3.3}\,(d) we get 
$\cgn{\bar{y}\mI{X}}{n}=\bar{y}$
and $\cgn{0\mI{X}}{n}=0$ for all $n.$ Monotonicity follows from Proposition~\ref{pro3.2}\,(a). Additionally, the lower $n$-Sugeno integral $\cgdn{\bx}{n}$ for $n\ge 2$ is also an aggregation function 
if the link map $\st\colon [0,\infty] \times Y\to [0,\infty]$ is such that $\st\ge \wedge.$  In fact, the monotonicity follows from Proposition~\ref{pro4.2}\,(a), and by Proposition~\ref{pro4.3}\,(f) we have 
$\cgdn{\bar{y}\mI{X}}{n}=\bar{y}$
and $\cgdn{0\mI{X}}{n}=0.$
Moreover, for each $n\in\mN$ we obtain $$\cgn{\bx}{n+1} = \bigvee_{T\subset X}\big[\big((\bigwedge_{i\in T} x_i)\c \cgn{\bx}{n}\big)\wedge \mu(T)\big], \qquad \cgdn{\bx}{n+1} = \bigvee_{T\subset X}\big[(\bigwedge_{i\in T} x_i)\wedge \big(\mu(T)\st  \cgdn{\bx}{n}\big) \big]$$ (see Theorem~\ref{tw3.6} and Theorem \ref{tw4.5} for $\cA=2^X$)
providing the weighted max-min-type representations of the two sequences of aggregation functions.  

\medskip 
\section*{Conclusions}
Generalizing the upper and lower $2$-$h$-indices of Mesiar and G\k{a}golewski \cite{mesiar14} we have constructed upper and lower $n$-Sugeno integrals via iterating the Sugeno integral. These two classes of new functionals also include the generalized Kosmulski index \cite{deineko} and $\rH_\alpha$-index \cite{JMS}.
We have examined some of their universal mathematical properties that are useful in various fields such as scientometry, theory of integral and aggregation functions. Since there is only a~few number of papers combining the above fields, the present paper stimulates a~deeper study of the relationship between nonlinear functionals and scientometric indices. Thus, an applied research is supported by a~theoretical research.
 
As a~by-product, we have partially solved the question on computation of certain pseudo-decomposition integral providing the representation of Benvenuti integral of $n$-th order with respect to operations $\oplus=+$ and $\odot = \wedge$ as the lower $n$-Sugeno integral with respect to $+.$ So, our approach provides a~new way to look at pseudo-decomposition integrals and possibilities of their computation. 

\section*{Acknowledgement}
Authors would like to express their sincere thanks and gratitude to 
anonymous reviewers for their thoughtful suggestions toward the improvement of the paper.
This work was supported by the Slovak Research and Development Agency
under the  contract  No.  APVV-16-0337. The work is also cofinanced by bilateral call Slovak-Poland grant scheme No. SK-PL-18-0032 together with
the Polish National Agency for Academic Exchange under the contract No. PPN/BIL/2018/1/00049/U/00001.

\section*{Appendix}

\begin{proof}[\textbf{Proof of Theorems~\ref{tw3.5} and \ref{tw4.6}}] The arguments are similar to those of \cite[Theorem 2]{boczek9}. Put 
\begin{align*}
S_{n+1}&=\sup_{t\in Y}\left\{(t\c S_n)\wedge \big(\mu(\{ f\ge t\})\st S_n\big)\right\},\quad
Z_{n+1}=\inf_{t\in Y}\left\{(t\c Z_n)\vee \big(\mu(\{f>t\})\st Z_n\big)\right\}
\end{align*}
for all $n\ge 1$ with $S_1=\rI{f}=Z_1,$ 
where  
\begin{enumerate}[noitemsep]
\item[(A)]the map $\c$ is given in  Theorem~\ref{tw3.5} and $a\st b=a,$ or
\item[(B)] the map
$\st $ is defined in  Theorem~\ref{tw4.6} and $a\c b=a.$
\end{enumerate}
Assume that $S_1>0$ as if $S_1=0,$ then $S_n=0=Z_n$ for all $n$ (see  Propositions~\ref{pro3.3}\,(a) and \ref{pro4.3}\,(a)).
By induction we show that $S_n=Z_n$ for all $n.$ Clearly $S_1=Z_1.$ Suppose that $S_k=Z_k$  
for all $k\le  n.$ 
Set 
\begin{align*}
I:=\{t\in Y\colon \mu(\{ f\ge t\})\st S_n\ge t\c S_n\},\quad
J:=\{t\in Y\colon \mu (\{ f> t\})\st S_n\ge t\c S_n\}.
\end{align*}
Clearly, $J\subset I$ and $0\in J$ in the case (B). In the case (A) we have $a\st b=a,$ so we find that 
\begin{align*}
0\c S_n&\le S_n=\inf_{t\in Y} \big\{(t\c S_{n-1})\vee \mu (\{f> t\})\big\}\notag
\\&\le (0\c S_{n-1})\vee \mu (\{f> 0\})\le S_{n-1}\vee \mu(\{f>0\})\notag
\\&\le\ldots\le S_1\vee \mu (\{f>0\})=\mu(\{f>0\}),
\end{align*}
as $0\c y\le y$ and $S_1=\rI{f}=\inf _{t\in Y}\left\{t\vee\mu (\{ f>t\})\right\}\le \mu (\{f>0\}).$ In consequence,  $0\in J$ in both cases.
Since $t\mapsto t\c S$ is  nondecreasing and $t\mapsto \mu (\{ f> t\})\st S_n$ is nonincreasing, we have  $I=[0,a]$ or $I=[0,a)$ and $J=[0,b]$ or $J=[0,b)$ with $b\le a.$ We need to show that $a=b.$
Suppose  that $b<a.$ Hence by the definition of $I$ and $J,$ we have 
\begin{align*}
\mu (\{f> t\})\st S_n<t\c S_n\le  \mu (\{f\ge  t\})\st S_n
\end{align*}
for any $t\in (b,a).$
Let $b<d<c<a.$ As $\{f\ge c\}\subset \{f>d\},$ we get 
\begin{align*}
\mu (\{f\ge c\})\st S_n&\le \mu (\{f> d\})\st S_n
<d\c S_n\le c\c S_n\le \mu(\{f\ge c\})\st S_n,
\end{align*}
a~contradiction.

For each interval $D,$ let $D^c = [0,\bar{y}]\setminus D.$ 
 By continuity of the maps $t\mapsto t\c s$ and $t\mapsto t\st s$  
we obtain   
 \begin{align*}
S_{n+1}&=\sup_{t\in Y} \big\{(t\c S_n)\wedge \big(\mu (\{f\ge t\})\st S_n\big)\big\}=\sup _{t\in I}\{t\c S_n\}\vee \sup _{t\in I^c}\big\{\mu (\{f\ge t\})\st S_n\big\}
\\&=  \begin{cases}
\bar{y}\c S_n & \text{if } I=[0,\bar{y}],\\
 (a\c S_n)\vee \big(\mu(\{f\ge a^+\})\st S_n\big) & \text{if } I=[0,a],\, a<\bar{y},\\
(a\c S_n)\vee \big(\mu(\{f\ge a\})\st S_n\big)   & \text{if } I=[0,a),\, a\le \bar{y}
 \end{cases}
 \end{align*}  
with the convention that $\sup_{\emptyset}=0$ and $\inf_{ \emptyset}= \infty.$ Observe that  \begin{itemize}[noitemsep]
\item   if $I=[0,a]$ for $a<\bar{y},$ then from the definition of $I,$ $\mu (\{f\ge a^+\})\st S_n\le a\c S_n,$ so $S_{n+1}=a\c S_n,$ 
\item if $I=[0,a)$ for $a\le\bar{y},$ we have  $\mu (\{f\ge a\})\st S_n<a\c S_n,$ so $S_{n+1}=a\c S_n.$ 
\end{itemize}  
This implies that  $S_{n+1}=a\c S_n.$
Further, we have
\begin{align*}
Z_{n+1}&=\inf_{t\in Y} \big\{(t\c S_n)\vee \big(\mu (\{f> t\})\st S_n\big)\big\}=\inf _{t\in J}\{\mu (\{f>t\})\st S_n\} \wedge \inf _{t\in J^c}\{t\c S_n\}
\\&=
\begin{cases}
0\st S_n & \text{if } J=[0,\bar{y}],\\
(a\c S_n)\wedge \big(\mu (\{f>a\})\st S_n\big) & \text{if } J=[0,a],\,a<\bar{y},\\
(a\c S_n)\wedge \big(\mu (\{f>a^-\})\st S_n\big) & \hbox{if } J=[0,a),\,a\le\bar{y}
\end{cases}
\\&=
\begin{cases}
0\st S_n & \text{if } J=[0,\bar{y}],\\
a\c S_n & \text{if } J=[0,a],\,a<\bar{y}\; \hbox{ or }\; J=[0,a), \,a\le\bar{y},
\end{cases}
\end{align*} 
as
\begin{itemize}[noitemsep]
\item if $J=[0,a]$ and $a<\bar{y},$ then  $\mu (\{f>a\})\st S_n\ge a\c S_n,$ so $Z_{n+1}=a\c S_n,$ 
\item if $J=[0,a)$ and $a\le \bar{y},$ then $\mu (\{f>a^-\} )\st S_n\ge a\c S_n,$ and so $Z_{n+1}=a\c S_n.$ 
\end{itemize}
Consequently, we need to show that 
$S_{n+1}=Z_{n+1}$ if $J=[0,\bar{y}]=I.$ 
Indeed, we have $S_{n+1}=\bar{y}\c S_n$ and $Z_{n+1}=0\st S_n.$ Moreover, 
$$0\st S_n=\mu (\{f>\bar{y}\})\st S_n\ge \bar{y}\c S_n.$$
In the case (A), we have $0=0\st S_n\ge  \bar{y}\c S_n\ge 0,$ so $S_{n+1}=Z_{n+1}.$ In the case (B), $S_{n+1}=\bar{y},$ $Z_{n+1}=0\st S_n$ and $0\st S_n\ge \bar{y}.$
As $S_n\ge 0\st S_n$ and $S_n\le \bar{y},$ we get   
$0\st S_n=\bar{y},$ and so $S_{n+1}=Z_{n+1}.$ The proof is complete.
\end{proof}

\begin{proof}[\textbf{Proof of Theorem~\ref{tw4.10}}]
We begin with the formula \eqref{MU1}.
 It is clear that 
$\bmm{1}{+}{\wedge}{f}=\rI{f}=\cgdn[+]{f}{1}.$  
From Lemma~\ref{lemat}\,(e) it follows that if $\rI{f}=0,$ then $\mu(\{f\ge t\})=0$ for all $t>0.$ 
Hence by Proposition~\ref{pro4.3}\,(a),
$\bmm{n}{+}{\wedge}{f}=0=\cgdn[+]{f}{n}$ and the assertion holds for any $n.$

From now on let us assume $\rI{f}>0.$ To get a~better understanding, we first consider the case $n=2,$ that is, we show that 
$\cgd[+]{f}=\sup_{b_2\in Y}M_2(b_2),
$
where
$$
M_2(b_2)=(b_2\wedge\mu(\{ f\ge b_2\}))+\sup_{b_1\ge b_2}\big\{(b_1-b_2)\wedge  \mu (\{ f\ge b_1\})\big\}.
$$
Clearly, $b_2\wedge \mu (\{f\ge b_2\})\le \rI{f},$ so 
$b_2\wedge \mu (\{f\ge b_2\})\le b_2\wedge  \rI{f}$ and
\begin{align*}
M_2(b_2)&\le (b_2\wedge \rI{f})+\sup_{b_1\ge b_2\wedge \rI{f}} \big\{ \big(b_1-(b_2\wedge \rI{f})\big) \wedge\mu (\{ f\ge b_1\})\big\}
\\&=\sup_{b_1\ge b_2\wedge \rI{f}} \big\{b_1 \wedge \big(\mu (\{ f\ge b_1\})+(b_2\wedge \rI{f})\big)\big\} 
\\&\le \sup_{b_1\ge b_2\wedge \rI{f}} \big\{b_1 \wedge \big(\mu (\{ f\ge b_1\})+ \rI{f}\big)\big\}
\\&=\rI{f}+\sup_{b_1\ge b_2\wedge \rI{f}} \big\{ (b_1- \rI{f}) \wedge\mu (\{ f\ge b_1\})\big\}.
\end{align*}
Since $\sup_{b_1\in [b_2\wedge\rI{f},\,\rI{f}]} \big\{ (b_1- \rI{f}) \wedge\mu (\{ f\ge b_1\})\big\}=0,$ we have
\begin{align*}
M_2(b_2)&\le \sup_{b_1\ge \rI{f}} \big\{b_1 \wedge (\mu (\{ f\ge b_1\})+\rI{f})\big\}=\cgd[+]{f}.
\end{align*}
From the above it follows that
\begin{align}\label{l3}
\sup_{b_2\in Y} M_2(b_2)&\le \cgd[+]{f}.
\end{align}
Now we show that  the reverse inequality holds in \eqref{l3}. Recall that $\rI{f}>0.$ Evidently
$$\sup_{b_2\in Y}M_2(b_2)\ge \lim _{b_2\nearrow \rI{f}}M_2(b_2).$$ 
By Lemma  \ref{lemat}\,(c), we get   
\begin{align*}
\lim _{b_2\nearrow \rI{f}}M_2(b_2)&= \rI{f}+\lim _{b_2\nearrow \rI{f}}\sup_{b_1\ge b_2} \big\{(b_1-b_2)\wedge\mu (\{ f\ge b_1\})\big\}\nonumber
\\&\ge \rI{f}+\lim _{b_2\nearrow \rI{f}}\sup_{b_1\ge b_2}\big\{ (b_1-\rI{f})\wedge\mu(\{f\ge b_1\})\big\}\nonumber
\\&=\rI{f}+\sup_{b_1\ge \rI{f}}\big\{ (b_1-\rI{f})\wedge\mu(\{ f\ge b_1\})\big\}.
 \end{align*}
Thus, $\sup_{a_2\in Y}M_2(a_2)\ge \cgd[+]{f},$ so there is the equality in \eqref{l3}, as claimed.\\

Now, we show that the assertion \eqref{sub11} holds for all $n>2.$ Observe that  
\begin{align*}
\bmm{n}{+}{\wedge}{f}
=\sup_{b_n\in Y}  M_n(b_n,b_{n+1}),
\end{align*}
where $b_{n+1}=0$ and
$M_{n}$
is defined recursively using the formula
\begin{align*}
M_k(b_k,b_{k+1}):=\big[(b_k-b_{k+1})\wedge\mu (\{ f\ge b_k\})\big]+\sup_{b_{k-1}\ge b_k}  M_{k-1}(b_{k-1},b_k)
\end{align*}
for $k=2,\ldots,n$
with the initial condition $M_1(b_1,b_2)=(b_1-b_2)\wedge   
\mu (\{ f\ge b_1\}).$
Put $M^*_k(b_k):=\sup _{b_{k-1}\ge b_k}M_{k-1}(b_{k-1},b_k)$ for $k\ge 2.$ Mimicking the proof for $n=2,$ we obtain 
\begin{align*}
M_n(b_n,b_{n+1})&\le (b_n \wedge\rI{f})+\sup_{b_{n-1}\ge b_n }\big\{\big[(b_{n-1}-b_n)\wedge \mu(\{ f\ge b_{n-1}\})\big]+M_{n-1}^*(b_{n-1})\big\}
\\&\le (b_n \wedge\rI{f})
\\&\quad+\sup_{b_{n-1}\ge b_n \wedge\rI{f}}\big\{\big[\big(b_{n-1}-(b_n \wedge\rI{f})\big)\wedge \mu(\{ f\ge b_{n-1}\})\big]+M_{n-1}^*(b_{n-1})\big\}
\\&=\sup_{b_{n-1}\ge b_n \wedge\rI{f}}\big\{\big[b_{n-1}\wedge \big(\mu(\{ f\ge b_{n-1}\})+(b_n \wedge\rI{f})\big)\big]+M_{n-1}^*(b_{n-1})\big\}
\\&\le\sup_{b_{n-1}\ge b_n \wedge\rI{f}}\big\{\big[b_{n-1}\wedge \big(\mu(\{ f\ge b_{n-1}\})+\rI{f}\big)\big]+M_{n-1}^*(b_{n-1})\big\}
\\&=\sup_{b_{n-1}\ge b_n \wedge\rI{f}} N_{n-1}(b_{n-1}).
\end{align*}
Here and subsequently, 
\begin{align*}
N_k(b):=\big[b\wedge \big(\mu(\{ f\ge b\})+\cgdn[+]{f}{n-k}\big)\big]+M_k^*(b)
\end{align*}
for all $k=1,\ldots,n-1$
with the convention $M^*_1(b):=0.$
By the very definition of $\cgd[+]{f},$ we have  $b\wedge (\mu(\{ f\ge b\})+\rI{f})\le b\wedge \cgd[+]{f}$ for $b\in Y.$ Thus
\begin{align*}
N_{n-1}(b_{n-1}) &\le (b_{n-1}\wedge \cgd[+]{f})+\sup_{b_{n-2}\ge b_{n-1}} \big\{\big[(b_{n-2}-b_{n-1})\wedge \mu(\{f\ge b_{n-2}\}) \big]+M_{n-2}^*(b_{n-2})\big\}
\\&\le (b_{n-1}\wedge \cgd[+]{f})\\&\quad+\sup_{b_{n-2}\ge b_{n-1}\wedge \cgd[+]{f}} \big\{\big[\big(b_{n-2}-(b_{n-1}\wedge \cgd[+]{f})\big)\wedge \mu(\{f\ge b_{n-2}\}) \big]+M_{n-2}^*(b_{n-2})\big\}
\\&=\sup_{b_{n-2}\ge b_{n-1} \wedge\cgd[+]{f}}\big\{\big[b_{n-2}\wedge \big(\mu(\{ f\ge b_{n-2}\})+(b_{n-1} \wedge\cgdn[+]{f}{2})\big)\big]+M_{n-2}^*(b_{n-2})\big\}
\\&\le\sup_{b_{n-2}\ge b_{n-1} \wedge\cgd[+]{f}}\big\{\big[b_{n-2}\wedge \big(\mu(\{ f\ge b_{n-2}\})+\cgd[+]{f}\big)\big]+M_{n-2}^*(b_{n-2})\big\}
\\&=\sup_{b_{n-2}\ge b_{n-1} \wedge\cgd[+]{f}} N_{n-2}(b_{n-2})
\end{align*}
for any $b_{n-1}\ge b_n\wedge \rI{f}.$ In the same manner we obtain for $k=1,\ldots,n-2,$
\begin{align*}
\sup _{b_{n-k}\ge b_{n-k+1}\wedge\cgdn[+]{f}{k}}N_{n-k}(b_{n-k})\le \sup _{b_{n-k-1}\ge b_{n-k}\wedge\cgdn[+]{f}{k+1}}N_{n-k-1}(b_{n-k-1}).
\end{align*}
As a~consequence, we  get  
$$M_n(b_n,b_{n+1})\le \sup_{b_{n-1}\ge b_{n}\wedge \rI{f}}N_{n-1}(b_{n-1})\le\ldots\le\sup_{b_1\ge b_2\wedge\cgdn[+]{f}{n-1}} N_1(b_1)=\cgdn[+]{f}{n}$$
for all $b_n\in Y.$ Therefore,
\begin{align*}
\bmm{n}{+}{\wedge}{f}
\le \cgdn[+]{f}{n}.
 \end{align*} 
To finish the proof it is sufficient to show that $\sup_{b_n \in Y} M_n(b_n,b_{n+1})\ge \cgdn[+]{f}{n}.$ 
Using Lemma \ref{lemat}\,(c) and mimicking the proof for $n=2,$ we obtain
\begin{align}
\sup_{b_n\in Y} M_n(b_n,b_{n+1})&\ge \lim_{b_n\nearrow \rI{f}} M_n(b_n,b_{n+1})\notag
\\&=\rI{f}+\lim_{b_n\nearrow \rI{f}}\sup_{b_{n-1}\ge b_n}\big\{\big[(b_{n-1}-b_n)\wedge \mu(\{ f\ge b_{n-1}\})\big]+M^*_{n-1}(b_{n-1})\big\}\notag
\\&\ge  \rI{f}+\lim_{b_n\nearrow \rI{f}}\sup_{b_{n-1}\ge b_n}\big\{\big[(b_{n-1}-\rI{f})_{+}\wedge \mu(\{ f\ge b_{n-1}\})\big]+M^*_{n-1}(b_{n-1})\big\}\nonumber
\\&=\rI{f}+\sup_{b_{n-1}\ge \rI{f}}\big\{\big[(b_{n-1}-\rI{f})\wedge \mu(\{ f\ge b_{n-1}\})\big]+M^*_{n-1}(b_{n-1})\big\}\nonumber
\\&=\sup_{b_{n-1}\ge \rI{f}} N_{n-1}(b_{n-1}).\label{l4}
\end{align}
By Proposition~\ref{pro4.4},  $\cgd[+]{f}\ge \rI{f}>0.$   
Proposition~\ref{pro4.3}\,(b)  and \eqref{l4} implies 
\begin{align}
\sup_{b_n\in Y} M_n(b_n,b_{n+1})&
\ge\lim_{b_{n-1}\nearrow \cgd[+]{f}}\Big(\big[b_{n-1}\wedge \big(\mu(\{ f\ge b_{n-1}\})+\rI{f}\big)\big]+M_{n-1}^*(b_{n-1})\Big)\notag
\\&= \cgd[+]{f}+\lim_{b_{n-1}\nearrow \cgd[+]{f}}\sup_{b_{n-2}\ge b_{n-1}}M_{n-2}(b_{n-2},b_{n-1}).\label{l5a}
\end{align}
Next,  we get
\begin{align}\label{l5b}
\lim_{b_{n-1}\nearrow \cgd[+]{f}}&\sup_{b_{n-2}\ge b_{n-1}}M_{n-2}(b_{n-2},b_{n-1})\notag\\
&\ge \lim_{b_{n-1}\nearrow \cgd[+]{f}}\sup_{b_{n-2}\ge b_{n-1}}\big\{\big[(b_{n-2}-\cgd[+]{f})_+\wedge\mu(\{f\ge b_{n-2}\})\big]+M_{n-2}^*(b_{n-2})\big\}\notag\\
&\ge \sup_{b_{n-2}\ge \cgd[+]{f}}\big\{\big[(b_{n-2}-\cgd[+]{f})\wedge\mu(\{f\ge b_{n-2}\})\big]+M_{n-2}^*(b_{n-2})\big\}.
\end{align}
Thus from \eqref{l5a} and \eqref{l5b} we obtain
\begin{align*}
\sup_{b_n\in Y} M_n(b_n,b_{n+1})&\ge
\sup_{b_{n-2}\ge \cgd[+]{f}}N_{n-2}(b_{n-2}).
\end{align*}
Repeating the same reasoning we get 
\begin{align*}
\sup_{b_n\in Y} M_n(b_n,b_{n+1})\ge \sup_{b_{1}\ge \cgdn[+]{f}{n-1}}N_1(b_1)=\cgdn[+]{f}{n},
\end{align*}
as required.
The proof is complete.
\end{proof}

\begin{proof}[\textbf{Proof of Theorem~\ref{tw4.12}}]
Let $n\ge 2.$ We need to  show that $\cgdn[+]{f}{n}=L_n,$ where  
\begin{align*}
L_n&=\inf\Big\{\sum_{i=1}^n \big((b_i-b_{i+1})\vee \mu(\{ f> b_i\})\big)\colon  0=b_{n+1}\le b_n\le \ldots\le b_1 \le \bar{y}\Big\}.
\end{align*}
Clearly, $L_n=\inf_{b_n\in Y}  M_n(b_n,b_{n+1}),$
where 
$M_{n}$
is defined recursively using the formula
\begin{align*}
M_k(b_k,b_{k+1}):=\big[(b_k-b_{k+1})\vee\mu (\{ f>b_k\})\big]+\inf_{b_{k-1}\ge b_k}  M_{k-1}(b_{k-1},b_k)
\end{align*}
for any $k=2,\ldots,n$
with $M_1(b_1,b_2):=(b_1-b_2)\vee \mu (\{ f>b_1\}).$ 
For simplicity,  
put $M^*_k(b_k):=\inf _{b_{k-1}\ge b_k}M_{k-1}(b_{k-1},b_k)$ for $k=2,\ldots,n.$ Since   $b\vee \mu (\{f>b\})\ge \rI{f},$ we have 
$b\vee \mu (\{f>b\})\ge b\vee \rI{f}$ for all $b\in Y.$ Thus, 
\begin{align}\label{NO1}
M_n(b_n,b_{n+1})&\ge (b_n \vee\rI{f})
+\inf_{b_{n-1}\ge b_n \wedge\rI{f}}\left\{\big[\big(b_{n-1}-(b_n \vee \rI{f})\big)\vee \mu(\{ f>b_{n-1}\})\big]+M_{n-1}^*(b_{n-1})\right\}\notag\\&=\inf_{b_{n-1}\ge b_n \wedge\rI{f}}\left\{\big[b_{n-1}\vee \big(\mu(\{ f>b_{n-1}\})+(b_n \vee\rI{f})\big)\big]+M_{n-1}^*(b_{n-1})\right\}\notag\\
&\ge\inf_{b_{n-1}\ge b_n \wedge\rI{f}}\left\{\big[b_{n-1}\vee\big(\mu(\{ f>b_{n-1}\})+\rI{f}\big)\big]+M_{n-1}^*(b_{n-1})\right\}\notag\\ 
&=\inf_{b_{n-1}\ge b_n\wedge \rI{f}} N_{n-1}(b_{n-1}),
\end{align}
where 
$N_k(b):=\big[b\vee \big(\mu(\{ f>b\})+\cgdn[+]{f}{n-k}\big)\big]+M_k^*(b)$ for $k=1,\ldots,n-1$
with $M^*_1(b_1):=0.$
Theorem~\ref{tw4.6} gives that $b\vee (\mu(\{ f>b\})+\rI{f})\ge b\vee \cgd[+]{f}$ for all $b\in Y.$ Thus, 
\begin{align*}
N_{n-1}(b_{n-1}) &\ge (b_{n-1}\vee \cgd[+]{f})+\inf_{b_{n-2}\ge b_{n-1}}\big\{\big[(b_{n-2}-b_{n-1})\vee \mu(\{f> b_{n-2}\}) \big]+M_{n-2}^*(b_{n-2})\big\}
\\&\ge (b_{n-1}\vee \cgd[+]{f})
\\&\quad+\inf_{b_{n-2}\ge b_{n-1}\wedge \cgd[+]{f}} \big\{\big[\big(b_{n-2}-(b_{n-1}\vee \cgd[+]{f})\big)\vee\mu(\{f> b_{n-2}\}) \big]+M_{n-2}^*(b_{n-2})\big\}
\\&=\inf_{b_{n-2}\ge b_{n-1} \wedge\cgd[+]{f}}\big\{\big[b_{n-2}\vee \big(\mu(\{ f> b_{n-2}\})+(b_{n-1} \vee\cgdn[+]{f}{2})\big)\big]+M_{n-2}^*(b_{n-2})\big\}
\\&\ge\inf_{b_{n-2}\ge b_{n-1} \wedge\cgd[+]{f}}\big\{\big[b_{n-2}\vee\big(\mu(\{ f> b_{n-2}\})+\cgd[+]{f}\big)\big]+M_{n-2}^*(b_{n-2})\big\}
\\&=\inf_{b_{n-2}\ge b_{n-1} \wedge\cgd[+]{f}} N_{n-2}(b_{n-2})
\end{align*}
for any $b_{n-1}\ge b_n\wedge \rI{f}.$ Analogously,  for $k=1,\ldots,n-2$ we get 
\begin{align}\label{lali}
\inf_{b_{n-k}\ge b_{n-k+1}\wedge\cgdn[+]{f}{k}}N_{n-k}(b_{n-k})\ge \inf _{b_{n-k-1}\ge 
b_{n-k}\wedge\cgdn[+]{f}{k+1}}N_{n-k-1}(b_{n-k-1}),
\end{align}
where we use the fact that   for all $b\in Y$ and $k=2,\ldots,n-1$
\begin{align*}
 b\vee \big(\mu(\{f>b\})+\cgdn[+]{f}{k-1}\big) \ge b\vee \cgdn[+]{f}{k}
\end{align*}
(see Theorem~\ref{tw4.6}). As a~consequence of \eqref{lali} and \eqref{NO1}, we obtain  
\begin{align*}M_n(b_n,b_{n+1})&\ge \inf_{b_{n-1}\ge b_{n}\wedge \rI{f}}N_{n-1}(b_{n-1})\ge\ldots\ge\inf_{b_1\ge b_2\wedge\cgdn[+]{f}{n-1}} N_1(b_1)
\\&\ge \inf_{b_1\in Y} N_1(b_1)=\cgdn[+]{f}{n}
\end{align*}
 for all $b_n\in Y.$ Therefore $
L_n\ge \cgdn[+]{f}{n}.$ 
We show that $L_n\le \cgdn[+]{f}{n}.$ 

\noindent Let $\cgdn[+]{f}{n}=\bar{y}.$ Then 
$$
L_n\le M_n(\bar{y},b_{n+1})=\bar{y}+M^*_n(\bar{y})=\bar{y}=\cgdn[+]{f}{n}.
$$
Assume that $\cgdn[+]{f}{n}<\bar{y}.$  Using Lemma \ref{lemat}\,(d), we have
\begin{align}
L_n&\le \lim_{b_n\searrow \rI{f}} M_n(b_n,b_{n+1})\notag
\\&=\rI{f}+
\lim_{b_n\searrow \rI{f}}\inf_{b_{n-1}\ge b_n}\big\{\big[(b_{n-1}-b_n)\vee \mu(\{ f> b_{n-1}\})\big]+M^*_{n-1}(b_{n-1})\big\}\notag
\\&\le  \rI{f}+\lim_{b_n\searrow \rI{f}}\inf_{b_{n-1}\ge b_n}\left\{\big[(b_{n-1}-\rI{f})\vee \mu(\{ f>b_{n-1}\})\big]+M^*_{n-1}(b_{n-1})\right\}\nonumber
\\&=\rI{f}+\inf_{b_{n-1}\ge \rI{f}}\left\{\big[(b_{n-1}-\rI{f})\vee \mu(\{ f> b_{n-1}\})\big]+M^*_{n-1}(b_{n-1})\right\}\nonumber
\\&=\inf_{b_{n-1}\ge \rI{f}} N_{n-1}(b_{n-1}).\label{l4a}
\end{align}
From~\eqref{l4a}, Propositions~\ref{pro4.4} and~\ref{pro4.3}\,(c) we get \begin{align}
L_n&\le\inf_{b_{n-1}\ge\rI{f}} N_{n-1}(b_{n-1})\le\inf_{b_{n-1}\ge\cgd[+]{f}} N_{n-1}(b_{n-1})\notag
\\&\le\lim_{b_{n-1}\searrow \cgd[+]{f}}\big(\big[b_{n-1}\vee \big(\mu(\{ f> b_{n-1}\})+\rI{f}\big)\big]+M_{n-1}^*(b_{n-1})\big)\notag
\\&= \cgd[+]{f}+\lim_{b_{n-1}\searrow \cgd[+]{f}}M_{n-1}^*(b_{n-1}).\label{l5c}
\end{align}
Next, we have
\begin{align}\label{l5d}
\lim_{b_{n-1}\searrow \cgd[+]{f}} M_{n-1}^*(b_{n-1})&\le \lim_{b_{n-1}\searrow \cgd[+]{f}}\inf_{b_{n-2}\ge b_{n-1}}\big\{\big[(b_{n-2}-\cgd[+]{f})\vee\mu(\{f>b_{n-2}\})\big]+M_{n-2}^*(b_{n-2})\big\}\notag
\\ & \le \inf_{b_{n-2}\ge \cgd[+]{f}}\big\{\big[(b_{n-2}-\cgd[+]{f})\vee\mu(\{f>b_{n-2}\})\big]+M_{n-2}^*(b_{n-2})\big\}.
\end{align}
Thus, from \eqref{l5c} and \eqref{l5d} we obtain
\begin{align*}
L_n&\le\inf_{b_{n-1}
\ge \cgd[+]{f}}N_{n-2}(b_{n-2}).
\end{align*} 
Repeating the same reasoning we get
\begin{align*}
L_n&\le \inf_{b_{1}\ge \cgdn[+]{f}{n-1}}N_1(b_1)
=\cgdn[+]{f}{n-1}+\inf_{b\ge \cgdn[+]{f}{n-1}}\big\{(b-\cgdn[+]{f}{n-1})\vee \mu(\{ f>b\})\big\}=\cgdn[+]{f}{n},
\end{align*}
as required. 
\end{proof}

\end{document}